\colorlet{mdtRed}{red!50!black}
\colorlet{dblue}{blue!50!black}
\renewcommand*{\backref}[1]{}
\renewcommand*{\backrefalt}[4]{[{%
		\ifcase #1 Not cited.%
		\or $\uparrow$~#2.%
		\else $\uparrow$~#2.%
		\fi%
	}]}
\DeclareMathOperator{\Spec}{{\rm Spec}}
\DeclareMathOperator{\Id}{{\rm Id}}
\DeclareMathOperator{\Hom}{{\rm Hom}}
\DeclareMathOperator{\Vect}{{\bf Vect}}
\DeclareMathOperator{\ParVect}{{\bf ParVect}}
\DeclareMathOperator{\RPVect}{{\bf RealParVect}}
\DeclareMathOperator{\Con}{{\rm Con}}
\newcommand{\mf}[1]{\mathfrak{#1}}
\newcommand{\mc}[1]{\mathcal{#1}}
\newcommand{\ms}[1]{\mathscr{#1}}
\newcommand{\bb}[1]{\mathbb{#1}}
\numberwithin{equation}{subsection}
\newtheorem{theorem}[equation]{Theorem}
\newtheorem{lemma}[equation]{Lemma}
\newtheorem{proposition}[equation]{Proposition}
\theoremstyle{definition}
\newtheorem{definition}[equation]{Definition}
\newtheorem{remark}[equation]{Remark}
\theoremstyle{theorem}
\newcommand\fnsymb[1]{\textsuperscript{\@fnsymbol{#1}}}
\newcommand\fnletter[1]{\lowercase{\textsuperscript{\@alph{#1}}}}
\newcommand\fnnum[1]{\textsuperscript{#1}}
\renewcommand{\email}[2][1]{\thanks{\textit{Email address}#1: \href{mailto:#2}{#2}}}
\renewcommand{\address}[2][1]{\thanks{\textit{Address}#1: #2}} 
\begin{document}

\baselineskip=15.5pt 

\title[Real Structures on Root Stacks and Parabolic Connections]{Real Structures on Root Stacks and Parabolic Connections}

\author[S. Chakraborty]{Sujoy Chakraborty\fnnum{1}}

\address[\fnnum{1}]{Department of Mathematics, 
Indian Institute of Science Education and Research Tirupati, Andhra Pradesh - 517507, India.}

\email[\fnnum{1}]{sujoy.cmi@gmail.com}

\author[A. Paul]{Arjun Paul\fnnum{2}}

\address[\fnnum{2}]{Department of Mathematics and Statistics, 
Indian Institute of Science Education and Research Kolkata, 
Mohanpur - 741 246, Nadia, West Bengal, India.} 

\email[\fnnum{2}]{arjun.paul@iiserkol.ac.in}

\thanks{Corresponding author: Sujoy Chakraborty}

\subjclass[2020]{14D23, 14H60, 53B15, 53C05, 14A21}

\keywords{Parabolic bundle; Root stack; Connection.}

\begin{abstract}
	Let $D$ be a reduced effective strict normal crossing divisor on a smooth complex variety $X$, 
	and let $\mf X$ be the associated root stack over $\mathbb C$. 
	Suppose that $X$ admits an anti-holomorphic involution (real structure) that keeps $D$ invariant. 
	We show that the root stack $\mf X$ naturally admits a real structure compatible with $X$. 
	We also establish an equivalence of categories between the category of real logarithmic connections 
	on this root stack and the category of real parabolic connections on $X$. 
\end{abstract}

\baselineskip=15.5pt 

\date{Last updated on \today\,at \currenttime\,(IST)}

\maketitle 

\tableofcontents

\section{Introduction}
Parabolic vector bundles were introduced by Mehta and Seshadri \cite{MS80} 
on a compact Riemann surface $X$ together with a set $D$ of marked points. 
In this initial formulation, a parabolic bundle is a vector bundle together 
with the data of a flag for each of its fibers over the points of $D$, 
together with a set of real numbers in $[0,1)$, called weights. 
The celebrated Mehta-Seshadri theorem shows a one-to-one correspondence 
between polystable parabolic vector bundles on $X$ of parabolic degree zero 
and the unitary representations of the fundamental group $\pi_1(X\setminus D)$ 
of the punctured Riemann surface. Later, Simpson in \cite{S90} reformulated 
and extended the definition of parabolic bundles to the case of schemes $X$ 
with a normal crossings divisor $D$. When the weights are all rational, and 
therefore can be assumed to be in $\frac{1}{r}\bb Z$ for some integer $r$, 
this amounts to a locally free sheaf $\mc E$ of finite rank, together with 
a filtration
\begin{align*}
	\mc E = \mc E_0\supset \mc E_{\frac{1}{r}}\supset \mc E_{\frac{2}{r}} 
	\supset\cdots\supset \mc E_{\frac{r-1}{r}}\supset \mc E_1 = \mc E(-D)
\end{align*}
Since their introduction, parabolic bundles, parabolic connections and their 
moduli spaces have been an active area of research, and have been used in a 
number of important applications. Even though most of the progress has been 
in the case of curves, more recently, parabolic connections have made appearance 
in the work of Donagi and Pantev on Geometric Langlands conjecture using Simpson's 
non-abelian Hodge theory \cite{DonPan19}. 

When the weights are rational, it is known that parabolic bundles can be 
interpreted equivalently as equivariant bundles on a suitable ramified 
Galois cover \cite{Biswas97}, and later, a more intrinsic interpretation was 
found using vector bundles on certain algebraic stacks associated to $(X,D)$, 
namely the  stack of roots \cite{Born07}. More precisely, there is a Fourier-like 
correspondence between parabolic vector bundles on a scheme and ordinary vector 
bundles on certain stack of roots. This naturally raised the question of 
understanding the parabolic connections on smooth varieties through such 
Fourier-like correspondence over root stacks. This has been shown to be 
true over curves \cite{BisMajWan12,LSS13}, and very recently over higher 
dimensions \cite{BornLaar23}. 

Here our aim is to study parabolic connections and similar correspondences 
in the case when the variety $X$ is over complex numbers and possesses a 
real structure, meaning that there is an anti-holomorphic involution on $X$, 
together with a given normal crossings divisor $D$ invariant under the involution. 
Some progress in this topic has been made for curves \cite{Amrutiya14a, Amrutiya14b} using 
suitable ramified Galois covers, but unfortunately it is unclear how to extend 
it for higher dimensions. Let $r$ be an integer. To the data $(r,D)$, we can 
associate a stack of $r$-th roots of $\mc{O}_X(D)$, which we denote by $\mf X$. 
It comes with a natural morphism $\pi: \mf{X}\rightarrow X$. 
We show that, if $X$ possesses a real structure $\sigma$, the stack $\mf X$ 
also possess a real structure $\sigma_{\mf X}$ compatible with that of $X$. 
This is done in Proposition \ref{prop:real-str-on-root-stack}. 
Next, we define the notions of real parabolic bundle and real parabolic connections, 
and prove the real parabolic connections on $X$ (meaning those parabolic connections 
which are compatible with the real structure) correspond to real logarithmic connections 
on these root stacks with their induced real structure. Moreover, under this correspondence, 
the strongly parabolic connections (c.f. Definition \ref{parabolic connection definition}) 
on $X$ correspond to the real holomorphic connections on the root stack. 

We state our main results below. 
Fix an integer $r$, and a reduced effective divisor $D$ satisfying $\sigma^*(D)=D$.

\begin{theorem}[Theorem \ref{thm:real-avatar-of-Biswas-Borne-correspondence}]
	There is an equivalence between the category of real parabolic vector bundles on 
	$(X,\sigma)$ having weights in $\frac{1}{r}\bb Z$ and the category of real vector 
	bundles on $(\mf X,\sigma_{\mf X})$. 
\end{theorem}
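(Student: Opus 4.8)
The plan is to build a real-structure refinement of the Biswas–Borne correspondence, using the underlying non-real equivalence as a black box and then promoting it $\sigma$-equivariantly. Recall that the (non-real) correspondence is a Fourier-like equivalence $\Phi \colon \mathbf{Vect}(\mf X) \xrightarrow{\sim} \mathbf{ParVect}_{\frac1r\mathbb Z}(X,D)$ sending a vector bundle $V$ on the root stack $\mf X$ to the parabolic bundle whose filtration is recovered by pushing forward the twists $V \otimes \mc L^{-k}$ (for $k = 0,\dots,r-1$) along $\pi \colon \mf X \to X$, where $\mc L$ is the tautological root line bundle. A \emph{real} object on $(\mf X, \sigma_{\mf X})$ is a vector bundle $V$ together with an isomorphism $\tilde\sigma \colon V \xrightarrow{\sim} \overline{\sigma_{\mf X}^* V}$ satisfying the involutivity condition (its conjugate-pullback composite is the identity), and similarly a real parabolic bundle on $(X,\sigma)$ carries such a conjugate-linear descent datum compatible with the filtration and weights. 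The goal is to show $\Phi$ upgrades to a functor on these categories and is an equivalence there.

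First I would fix notation for the real structures and verify the one geometric compatibility that makes everything run: the real structure $\sigma_{\mf X}$ from Proposition \ref{prop:real-str-on-root-stack} intertwines $\pi$ with $\sigma$, i.e.\ $\pi \circ \sigma_{\mf X} = \sigma \circ \pi$, and it pulls the tautological root datum back to its conjugate. Concretely I expect $\sigma_{\mf X}^* \mc L \cong \overline{\mc L}$ (or its appropriate conjugate), because $\sigma$ fixes $D$ and hence the line bundle $\mc O_X(D)$ is taken to its conjugate, and the $r$-th root construction is functorial for this data. Granting this, the key computation is that $\Phi$ commutes with the two operations that define a real structure, namely conjugation and pullback along the involution. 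That is, for a bundle $V$ on $\mf X$ one has a natural isomorphism
\begin{align*}
	\Phi\bigl(\overline{\sigma_{\mf X}^* V}\bigr) \;\cong\; \overline{\sigma^* \, \Phi(V)},
\end{align*}
where on the right the conjugate-pullback is taken in the category of parabolic bundles (acting on the underlying bundle, the flags, and sending each weight $\alpha$ to itself since $\sigma$ preserves $D$ and the weights are real). This is where the analysis lives: I must check that pushing forward the twists $\overline{\sigma_{\mf X}^*V} \otimes \mc L^{-k}$ matches, filtration step by step, the conjugate-pullback of the pushforwards of $V \otimes \mc L^{-k}$, using the base-change/projection compatibilities of $\pi$ with $\sigma_{\mf X}$ together with $\sigma_{\mf X}^*\mc L \cong \overline{\mc L}$.

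Once this natural isomorphism is established, upgrading the equivalence is formal. Given a real bundle $(V,\tilde\sigma)$, I transport the descent datum through $\Phi$: the isomorphism $\tilde\sigma \colon V \to \overline{\sigma_{\mf X}^* V}$ becomes, after applying $\Phi$ and composing with the natural isomorphism above, an isomorphism $\Phi(V) \to \overline{\sigma^*\Phi(V)}$, and the involutivity condition is preserved because the natural isomorphism is itself compatible with iterating the conjugate-pullback (a cocycle/coherence check). This defines a functor from real bundles on $(\mf X,\sigma_{\mf X})$ to real parabolic bundles on $(X,\sigma)$; running the same argument with the quasi-inverse of $\Phi$ produces a functor the other way, and the natural isomorphisms witnessing $\Phi$ as an equivalence are automatically $\sigma$-equivariant, hence descend to the real categories. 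Faithfulness and fullness descend because a morphism of real objects is precisely a morphism of the underlying objects commuting with the descent data, and $\Phi$ is already fully faithful on underlying objects.

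The main obstacle I anticipate is purely the bookkeeping of conjugate-linearity in the pushforward step: one must be careful that $\pi_*$ commutes with complex conjugation in the correct conjugate-linear sense and that the isomorphism $\sigma_{\mf X}^*\mc L \cong \overline{\mc L}$ is chosen compatibly with the trivialization of $\mc L^{\otimes r} \cong \pi^*\mc O_X(D)$, so that the induced maps on each graded piece $\mc E_{k/r}/\mc E_{(k+1)/r}$ respect the real structure on the flags without an unwanted sign or twist. I expect the involutivity (the condition that the square of the descent datum is the identity) to require the most care, since a coherence mismatch there would produce a \emph{quaternionic} rather than a real structure; verifying that the natural isomorphism of the previous paragraph squares correctly is therefore the crux.
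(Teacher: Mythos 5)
Your proposal is correct, and it rests on exactly the same geometric facts as the paper's proof: the identity $\pi\circ\sigma_{\mf X}=\sigma\circ\pi$ from Proposition \ref{real structure on root stack proposition}, the real structure on the tautological sheaf $\ms M$ from Proposition \ref{prop:real-structure-on-tautological-line-bundle}, and the resulting exchange isomorphisms $\pi_*\circ(\sigma_{\mf X})_*\cong \sigma_*\circ\pi_*$ and $(\sigma_{\mf X})_*\circ\pi^*\cong \pi^*\circ\sigma_*$. The difference is architectural. You prove a single statement about the plain (non-real) equivalence $\Phi$ of Theorem \ref{thm:Biswas-Borne-correspondence}, namely that it intertwines the conjugation endofunctors $(\sigma_{\mf X})_*$ and $\sigma_*$ up to a coherent natural isomorphism, and then transport real structures formally; in particular you get the converse direction for free by passing the intertwining isomorphism through the quasi-inverse (the mate construction). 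The paper instead argues both directions explicitly on objects already equipped with real structures: in the pushforward direction it constructs $\sigma_{E_{\ell/r}}$ on each $E_{\ell/r}=\pi_*(\mc E\otimes\ms M^{-\ell})$ and checks naturality against the inclusions $j_{\ell',\ell}$, and in the coend direction it exhibits an isomorphism of direct systems $(\sigma_{\mf X})_*(\pi^*E_\bullet\otimes\ms M^{\bullet r})\cong \pi^*E_\bullet\otimes\ms M^{\bullet r}$ and invokes Remark \ref{rem:isomorphism-of-colimits}. Your route is more modular: it isolates the involutivity (real versus quaternionic) question as one coherence check on the intertwining isomorphism, which you correctly flag as the crux; that check does go through, since the exchange isomorphisms are canonical and $\sigma_{\ms M}$ is itself involutive, and your route spares you the coend computation entirely. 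What the paper's more hands-on route buys is explicit data: the direct-system isomorphism produced in its converse direction is reused verbatim in the proof of Theorem \ref{thm:real-log-parbolic-connection-correspondence}, where one must verify compatibility with the logarithmic connections term by term in the coend; a purely formal transport argument would not hand you that isomorphism in a usable form, so if you intend to continue to the connection correspondence you would end up redoing the paper's computation anyway.
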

If we moreover assume that $D$ is a simple normal crossings divisor, then the pull-back 
of $D$ under the morphism $\pi$ defines a divisor $\mf D$ on $\mf X$ satisfying 
$\pi^*(D) = r\cdot \mf D$. In such cases, we have the following result. 

\begin{theorem}[Theorem \ref{thm:real-log-parbolic-connection-correspondence}]
	There is an equivalence between the category of real logarithmic connections 
	on the corresponding $r$-th root stack $(\mf X,\mf D)$, and the category of 
	real parabolic connections on $(X,D)$ having weights in $\frac{1}{r}\bb Z$. 
	Moreover, under this correspondence, the category of real holomorphic connections 
	on $\mf X$ is equivalent to the category of real strongly parabolic connections on 
	$(X,D)$ having weights in $\frac{1}{r}\bb Z$. 
\end{theorem}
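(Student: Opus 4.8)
The plan is to bootstrap the desired real equivalence from the non-real (complex) correspondence between logarithmic connections on $\mf X$ and parabolic connections on $(X,D)$, which is available in higher dimensions by \cite{BornLaar23} (and over curves by \cite{BisMajWan12, LSS13}), together with the bundle-level real equivalence of Theorem \ref{thm:real-avatar-of-Biswas-Borne-correspondence}. Recall that a real structure on an object is a descent datum: writing $\overline{(-)}$ for the functor induced by complex conjugation, a real logarithmic connection on $\mf X$ is a triple $(\mc E,\nabla,\varphi)$ where $(\mc E,\nabla)$ is a logarithmic connection and $\varphi\colon (\mc E,\nabla)\xrightarrow{\ \sim\ }\sigma_{\mf X}^*\,\overline{(\mc E,\nabla)}$ is an isomorphism of connections satisfying the involutivity condition $\sigma_{\mf X}^*\,\overline{\varphi}\circ\varphi=\Id$, and a morphism of real objects is one commuting with the $\varphi$'s; the analogous description holds for real parabolic connections on $(X,D)$ using $\sigma$. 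The abstract principle I will use is standard: an equivalence $F$ between two categories carrying such conjugation involutions induces an equivalence on the associated categories of real objects as soon as $F$ is equipped with a natural isomorphism $\eta\colon F\circ(\sigma_{\mf X}^*\,\overline{(-)})\xrightarrow{\ \sim\ }(\sigma^*\,\overline{(-)})\circ F$ compatible with the involutive structures, i.e. as soon as $(F,\eta)$ is an equivariant functor. Thus everything reduces to producing and checking this intertwiner $\eta$ at the level of connections.

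The construction of $\eta$ rests on the equivariance of $\pi$ furnished by Proposition \ref{prop:real-str-on-root-stack}, namely $\pi\circ\sigma_{\mf X}=\sigma\circ\pi$, together with the invariance $\sigma_{\mf X}^*\mf D=\mf D$ matching $\sigma^*D=D$. First I would recall that the underlying functor of the complex correspondence is built from $\pi_*$ of the twists of $\mc E$ by powers of the tautological root line bundle, the parabolic weights being read off from the induced grading by $\frac1r\bb Z$; the equivariance of $\pi$ and the compatibility of $\sigma_{\mf X}$ with the tautological bundle then give a canonical isomorphism between the conjugate pullback of the output and the output of the conjugate pullback at the level of filtered bundles, which is precisely the intertwiner underlying Theorem \ref{thm:real-avatar-of-Biswas-Borne-correspondence}. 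The new point is to promote this to connections: one checks that, because $\sigma$ is anti-holomorphic, the operator $\sigma^*\overline{\nabla}$ is again a $\bb C$-linear operator satisfying the Leibniz rule, and that $\sigma_{\mf X}^*$ preserves the logarithmic cotangent sheaf $\Omega^1_{\mf X}(\log\mf D)$ (thanks to the invariance of $\mf D$), so that log connections are sent to log connections, the same holding downstairs for $\Omega^1_X(\log D)$. One then verifies that $\eta$ carries the residue and the associated parabolic weights of $\sigma_{\mf X}^*\,\overline{(\mc E,\nabla)}$ to those of $(\mc E,\nabla)$.

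Granting $\eta$, the conclusion is formal. Given a real logarithmic connection $(\mc E,\nabla,\varphi)$ on $\mf X$, I apply the complex equivalence $F$ to $(\mc E,\nabla)$ and transport $\varphi$ through $\eta$ to obtain an isomorphism $\varphi'\colon F(\mc E,\nabla)\xrightarrow{\ \sim\ }\sigma^*\,\overline{F(\mc E,\nabla)}$; the involutivity $\sigma^*\,\overline{\varphi'}\circ\varphi'=\Id$ follows from that of $\varphi$ together with the coherence of $\eta$ with the two involutions, so $(F(\mc E,\nabla),\varphi')$ is a real parabolic connection. This assignment is functorial, and it is an equivalence because $F$ is one and $\eta$ is an isomorphism: full faithfulness reduces to full faithfulness of $F$ on morphisms commuting with the $\varphi$'s, and essential surjectivity is obtained by running the same construction with a quasi-inverse of $F$. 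For the \emph{moreover} statement, recall from the complex correspondence that holomorphic connections on $\mf X$ (those with no logarithmic poles, equivalently vanishing residue along $\mf D$) correspond to strongly parabolic connections on $(X,D)$ in the sense of Definition \ref{parabolic connection definition} (those whose residues strictly shift the parabolic filtration). Both conditions are closed under the conjugation–pullback functors, since $\sigma_{\mf X}^*$ and $\overline{(-)}$ preserve vanishing of the residue and preserve the strict-shift condition because the flags and weights are $\sigma$-invariant; hence the real equivalence constructed above restricts to the claimed equivalence between real holomorphic connections on $\mf X$ and real strongly parabolic connections on $(X,D)$.

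I expect the main obstacle to be the connection-level analysis of the anti-holomorphic conjugation carried out in the second paragraph: verifying that $\sigma^*\overline{\nabla}$ is a genuine logarithmic connection and that $\eta$ is compatible with the residues and with the weight shift built into the root-stack correspondence. Because $\sigma$ reverses the complex structure, the de Rham differential must be handled together with conjugation, and one must confirm that the real weights in $\frac1r\bb Z$ and the residue eigenvalues transform consistently so that the involutivity condition is preserved; once this compatibility is in place, the remaining steps are purely formal consequences of the equivariantization principle.
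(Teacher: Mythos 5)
Your proposal is correct, and its computational core is the same as the paper's: both proofs bootstrap from the complex Borne--Laaroussi correspondence and reduce everything to the compatibility of that correspondence with the conjugation functors, via $\pi\circ\sigma_{\mf X}=\sigma\circ\pi$, the real structures on $\ms M$ and on the canonical connections $d(\ell\mf D)$, and the identification $\pi^*\Omega^1_{X/\bb C}(\log D)\cong \Omega^1_{\mf X/\bb C}(\log \mf D)$. The genuine difference is the top-level packaging. The paper checks realness-preservation explicitly in \emph{both} directions: in the direction $(\mc E,\nabla)\mapsto (E_\bullet,\widehat{\nabla}_\bullet)$ it pushes the defining diagram of a real connection through the twist by $d(-\ell\mf D)$ and then through $\pi_*$, using $\pi_*\circ(\sigma_{\mf X})_*=\sigma_*\circ\pi_*$ and the projection formula; in the converse direction it re-runs the coend argument, showing that the direct system of connections in \eqref{def:eqn-for-coend-of-parabolic-connection} is isomorphic to its $(\sigma_{\mf X})_*$-image as a direct system in $\Con(\mf X,\mf D)$ and invoking Remark \ref{rem:isomorphism-of-colimits}. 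You instead construct a single intertwiner $\eta$ making the functor $F$ equivariant for the two involutions and then quote the equivariantization (fixed-points) principle, so that the converse direction, the functoriality on morphisms, and essential surjectivity all come out formally. That buys real economy --- and it cleanly handles a point the paper leaves implicit, namely that the assignment of induced real structures is natural and compatible with unit and counit, which is what an equivalence of the \emph{real} categories actually requires, rather than merely a correspondence of objects admitting real structures --- at the cost of having to verify the naturality and coherence of $\eta$ in full, which the paper's object-by-object check never confronts. Two translation remarks: in the paper's algebraic setting $\sigma$ already lies over $j^*$, so your conjugation functor $\sigma^*\,\overline{(-)}$ is simply the paper's $\sigma_*(-)$ (no separate conjugation functor is needed); and the residue/weight issue you flag as the main obstacle is resolved exactly as you anticipate, since the weights and the residue eigenvalues $\tfrac{\ell}{r}$ appearing in the strongly parabolic condition are real rational numbers, hence fixed by conjugation, which is why the equivalence restricts to give the ``moreover'' statement.
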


In the final section, we revisit the case when $X$ is a curve, and give a different 
proof for Theorem \ref{thm:real-log-parbolic-connection-correspondence} 
from a different approach using suitable ramified Galois covers on $X$.

\section{Real Structures}\label{sec:real-structures-preliminaries}
Let $j : \bb{C}\rightarrow \mathbb{C}$ be the complex conjugation; 
this is an $\bb R$-algebra isomorphism. 
Let $j^* : \Spec \bb C \to \Spec \bb C$ be the morphism of schemes induced by $j$. 

\begin{definition}
	A {\it real structure} on a $\bb C$-scheme $X$ is an involution 
	$\sigma : X \to X$ lying over $j^*$, i.e., the following diagram commutes: 
	\begin{equation}
	\begin{gathered}
		\xymatrix{
			X \ar[rr]^-\sigma \ar[d] && X \ar[d] \\ 
			\Spec \bb C \ar[rr]^-{j^*} && \Spec \bb C 
		}
	\end{gathered}
	\end{equation} 
\end{definition}

\noindent
Fix an irreducible smooth complex projective variety $X$ 
and a real structure $\sigma$ on it. 

\subsection{Real structures on $\mc O_X$-modules}
Given a sheaf of $\mc{O}_X$-modules $\mathcal{F}$, we denote by $\mathcal{F}^{\sigma}$ 
its direct image sheaf $\sigma_*\mathcal{F}$ considered as an $\mc{O}_X$-module via 
the isomorphism $\sigma^{\#}: \mc{O}_X \to \sigma_*\mc{O}_{X}$ induced by $\sigma$. 
A morphism of $\mc{O}_X$-modules $\varphi:\mathcal{F} \to \mathcal{G}$ 
induces an $\mc{O}_X$-module homomorphism on their direct images 
$\sigma_*\varphi : \sigma_*\mc F \to \sigma_*\mc G$. 

\begin{definition}\label{def:real-structure-on-sheaves-defn}
	\begin{enumerate}[(i)]
		\item A \textit{real structure} on an $\mc{O}_{X}$-module $\mathcal{F}$ 
		is an $\mc O_X$-module isomorphism 
		\begin{equation*}
			\sigma_{\mc F} : \mc F \to \sigma_*\mc F 
		\end{equation*}
		such that $\sigma_*(\sigma_{\mc F})\circ\sigma_{\mc F} = \Id_{\mc F}$. 
		A {\it real $\mc O_X$-module} is a pair $(\mc F, \sigma_{\mc F})$ 
		consisting of an $\mc O_X$-module $\mc F$ and a real structure $\sigma_{\mc F}$ on it. 
		
		\item Let $(\mc F, \sigma_{\mc F})$ and $(\mc G, \sigma_{\mc G})$ be two real $\mc O_X$-modules. 
		An $\mc O_X$-linear map $\varphi : \mc F \to \mc G$ is said to 
		{\it preserve their real structures} if the diagram 
		\begin{align*}
			\xymatrix{
				\mc F \ar[rr]^{\varphi} \ar[d]_{\sigma_{\mc F}} && \mc G \ar[d]^{\sigma_{\mc G}} \\
				\sigma_*\mc F \ar[rr]^{\sigma_*\varphi} && \sigma_*\mc G 
			}
		\end{align*} 
		commutes. 
		
		\item Let $(\mc F, \sigma_{\mc F})$ be a real $\mc O_X$-module. 
		A section $s \in \Gamma(X, \mc F)$ is said to be {\it real} if 
		$\sigma_{\mc F}(X)(s) = s$, where $\sigma_{\mc F}(X) : \Gamma(X, \mc F) \to \Gamma(X, \sigma_*\mc F)$ 
		is the $\bb C$-linear map induced by $\sigma_{\mc F}$. 
	\end{enumerate}
\end{definition}

\begin{remark}
	\begin{enumerate}[(i)]
		\item Note that the isomorphism $\sigma^{\#} : \mc O_X \to \sigma_*\mc O_X$ 
		induced by the real structure $\sigma$ on $X$ is a real structure on $\mc O_X$. 
		
		\item If $E$ and $F$ are $\mc O_X$-modules admitting real structures 
		$\sigma_E$ and $\sigma_F$, respectively, compatible with $\sigma$, 
		then $E\otimes F$ and $E^\vee$ admits real structures $\sigma_E\otimes\sigma_F$ 
		and $\sigma_E^\vee$, respectively, compatible with $\sigma$. 
	\end{enumerate}
	
\end{remark}


\begin{definition}
	A {\it real structure} on a $\bb C$-stack $\mf X$ is a morphism of stacks 
	$F: \mf{X} \to \mf{X}$ such that $F\circ F$ is $2$-isomorphic to the identity 
	morphism $\Id_{\mf{X}}$ of $\mf X$, and the following diagram is $2$-commutative. 
	\begin{align*}
		\xymatrix{
			\mf{X} \ar[d] \ar[r]^{F} & \mf{X} \ar[d]\\
			\Spec\bb{C} \ar[r]^{j^*} & \Spec\bb{C}
		}
	\end{align*}
\end{definition}

\label{sec:real-str-on-a-root-stack}

\section{Real Structures on Root Stacks}
\subsection{Root stack}\label{subsection:root stack}
By an abuse of notation, we denote a scheme and its associated stack 
by the same symbol, if no confusion arises. 
A \textit{generalized Cartier divisor} on a scheme $Y$ is a tuple $(M, s)$, 
where $M$ is an invertible sheaf of $\mc O_Y$-modules on $Y$ and $s \in \Gamma(Y, M)$. 
Given a generalized Cartier divisor $(L, s)$ on $X$ and a positive integer $r$, 
let $\mf X_{(L,\, s,\, r)}$ be the stack 
whose objects are given by tuples $\left(f : U \to X, M, \phi, t\right)$, where 
\begin{itemize}
	\item $f : U \to X$ is a morphism of $\bb C$-schemes, 
	\item $M$ is an invertible sheaf of $\mc O_U$-modules on $U$, 
	\item $t \in \Gamma(U, M)$, and 
	\item $\phi : M^{\otimes r} \stackrel{\simeq}{\longrightarrow} f^*L$ 
	is an $\mc O_U$-module isomorphism such that $\phi(t^{\otimes r}) = f^*s$. 
\end{itemize}
A morphism from 
$\left(f : U \to X, M, \phi, t\right)$ to $\left(f' : U' \to X, M', \phi', t'\right)$ 
in $\mf X_{(L,\, s,\, r)}$ is given by a pair $(g, \psi)$, where 
\begin{itemize}
	\item $g : U \to U'$ is a morphism of $X$-schemes, and 
	\item $\psi : M \to g^* M'$ is an $\mc O_U$-module isomorphism such that $\psi(t)= g^*(t')$. 
\end{itemize}
There is a natural morphism of stacks $\pi : {\mf X}_{(L,\, s,\, r)} \to X$ that sends 
the object $\left(f : U \to X, M, \phi, t\right)$ of $\mf X_{(L,\, s,\,r)}$ 
to the $X$-scheme $f : U \to X$, and a morphism $(g, \psi)$, as above, 
to the morphism of $X$-schemes $g : U \to U'$. 
Note that there is a natural morphism $X \to \Spec\bb C$ 
(arising from the structure morphism of $X$), 
whose composition with $\pi$ makes $\mf{X}_{(L,\, s,\, r)}$ into a $\bb{C}$-stack. 
The stack $\mf X_{(L, s, r)}$ is known as the {\it stack of $r$-th roots of $(L, s)$ over $X$}, 
and it is a Deligne-Mumford stack \cite[Theorem 2.3.3]{Cad07}.


We now construct a real structure on a root stack. 
Let $D$ be a strict normal crossing divisor on $X$, 
meaning that $D$ is a reduced effective divisor on $X$ such that all of its 
irreducible components are smooth and they intersects each other transversally. 
Recall that the invertible sheaf $\mc O_X(D)$ is a subsheaf of $\mc K_X$, 
where $\mc K_X$ is the constant sheaf given by $K(X)$, the function field of $X$. 
The constant function $1 \in K(X)$ gives rise to a global section of $\mc O_X(D)$, 
which we denote by $s_D$. Thus we get a generalized Cartier divisor $(\mc O_X(D),\, s_D)$ 
on $X$. Note that $D$ is the divisor of zeroes of $s_D$.

We say that $D$ is a \textit{real} divisor if it is invariant under $\sigma$ in the sense that 
$\sigma(D) = D$. Since $\sigma\circ\sigma = \Id_X$, it follows that $D$ is real if and only if 
$\sigma^*(D) := \sigma^{-1}(D) = D$. Henceforth we assume that $D$ is real. 

\begin{proposition}\label{prop:real-str-on-O_X(D)}
	With the above notations, the line bundle $\mc O_X(D)$ admits a real structure 
	for which $s_D$ is a real section if and only if $D$ is a real divisor. 
\end{proposition}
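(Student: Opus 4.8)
The plan is to realize both directions through the standard description of $\mc O_X(D)$ as a subsheaf of the constant function-field sheaf $\mc K_X$, under which $s_D$ is the constant section $1$. The real structure $\sigma$ induces, via its comorphism on the generic stalk, a $j$-semilinear ring automorphism $\sigma^{\#} : K(X) \to K(X)$, and I will use the resulting real structure $\sigma^{\#}_{\mc K} : \mc K_X \to \sigma_*\mc K_X$ on the constant sheaf as the ambient object into which everything embeds. The key geometric input is that this comorphism transforms divisors of rational functions by $\mathrm{div}(\sigma^{\#} f) = \sigma^{-1}(\mathrm{div}\, f)$, since $\sigma^{\#}$ intertwines the valuation at a prime divisor $Z$ with the valuation at $\sigma^{-1}(Z)$.

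For the forward implication, I would assume $\sigma^{-1}(D) = D$ and show that $\sigma^{\#}_{\mc K}$ restricts to an isomorphism $\sigma_{\mc O_X(D)} : \mc O_X(D) \to \sigma_*\mc O_X(D)$. Concretely, if $f \in \mc O_X(D)(U) = \{ f \in K(X) : (\mathrm{div}\, f + D)|_U \geq 0 \}$, then the divisor identity above together with $\sigma^{-1}(D) = D$ gives $(\mathrm{div}(\sigma^{\#}f) + D)|_{\sigma^{-1}U} \geq 0$, i.e. $\sigma^{\#}f \in \mc O_X(D)(\sigma^{-1}U) = (\sigma_*\mc O_X(D))(U)$. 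Since $\sigma^{\#}_{\mc K}$ is already a real structure on $\mc K_X$, its restriction is automatically $\mc O_X$-linear for the twisted module structure (this is exactly the $\sigma^{\#}$-semilinearity of $\sigma^{\#}_{\mc K}$) and satisfies the involution identity $\sigma_*(\sigma_{\mc O_X(D)})\circ\sigma_{\mc O_X(D)} = \Id$. Finally $\sigma_{\mc O_X(D)}(s_D) = \sigma^{\#}(1) = 1 = s_D$, so $s_D$ is real.

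For the converse, suppose $\sigma_{\mc O_X(D)} : \mc O_X(D) \to \sigma_*\mc O_X(D)$ is any real structure with $s_D$ real; I would recover $\sigma^{-1}(D) = D$ by comparing two computations of the zero divisor of the image of $s_D$. On one hand, $s_D$ has zero divisor $D$ in $\mc O_X(D)$, and since $\sigma_{\mc O_X(D)}$ is an isomorphism of line bundles it preserves vanishing loci, so the zero divisor of $\sigma_{\mc O_X(D)}(s_D)$ in $\sigma_*\mc O_X(D)$ equals $D$. On the other hand, realness means $\sigma_{\mc O_X(D)}(s_D) = s_D$ under the canonical identification $\Gamma(X, \sigma_*\mc O_X(D)) = \Gamma(X, \mc O_X(D))$; and the stalk of $\sigma_*\mc O_X(D)$ at a point $x$ is the stalk $\mc O_X(D)_{\sigma(x)}$, so the zero divisor of $s_D$ computed in $\sigma_*\mc O_X(D)$ is $\sigma^{-1}(D)$. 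Equating the two expressions yields $D = \sigma^{-1}(D)$.

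The routine parts are the verifications of $\mc O_X$-linearity, of the involution identity, and of the value on $s_D$, all of which follow formally once $\sigma^{\#}_{\mc K}$ is set up. The step that needs the most care, and where I expect the real content to lie, is the bookkeeping of the twisted $\mc O_X$-module structure on $\sigma_*$, specifically the appearance of $\sigma^{-1}$ both in the divisor identity $\mathrm{div}(\sigma^{\#}f) = \sigma^{-1}(\mathrm{div}\, f)$ and in the stalk identification $(\sigma_*\mc O_X(D))_x \cong \mc O_X(D)_{\sigma(x)}$, since it is precisely this $\sigma^{-1}$ that converts the realness of $s_D$ into the invariance $\sigma^{-1}(D) = D$.
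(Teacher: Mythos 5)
Your proof is correct, and it is both more concrete and more complete than the paper's own argument. The paper constructs the real structure abstractly: the involution property of $\sigma$ gives a canonical isomorphism $p : \sigma_*\mc O_X(D) \to \sigma^*\mc O_X(D)$, the hypothesis $\sigma(D)=D$ gives an isomorphism $h : \mc O_X(D) \to \sigma^*\mc O_X(D)$, and the real structure is defined as $\sigma_L := p^{-1}\circ h$, with the fact that $s_D$ is fixed merely asserted. Your forward direction unpacks exactly this construction at the level of the function field: restricting the semilinear automorphism $\sigma^{\#}$ of $\mc K_X$ to the subsheaf $\mc O_X(D)$ is the concrete realization of $p^{-1}\circ h$, with the identity $\mathrm{div}(\sigma^{\#}f)=\sigma^{-1}(\mathrm{div}\,f)$ playing the role of the paper's isomorphism $h$, and working inside $\mc K_X$ makes the verification $\sigma_L(s_D)=\sigma^{\#}(1)=1=s_D$ transparent rather than implicit. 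More significantly, the paper's proof establishes only the ``if'' direction of the stated equivalence; the ``only if'' direction (that a real structure fixing $s_D$ forces $\sigma^{-1}(D)=D$) is not addressed there at all, whereas your zero-divisor comparison --- an $\mc O_X$-module isomorphism of invertible sheaves preserves vanishing loci, while the stalk identification $(\sigma_*\mc O_X(D))_x \cong \mc O_X(D)_{\sigma(x)}$ converts the vanishing locus $D$ of $s_D$ into $\sigma^{-1}(D)$ --- supplies a clean and correct proof of it. In short, your route buys explicitness and the missing converse; the paper's abstract route buys brevity and directly produces the maps $p$ and $h$ that it reuses later (notably in building the real structure on the root stack in Proposition \ref{real structure on root stack proposition}), which explains its phrasing.
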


\begin{proof}
	Let $L := \mc O_X(D)$. 
	Since $\sigma$ is an involution of $X$, we have a canonical 
	$\mc{O}_X$-linear isomorphism $\sigma^*\sigma_*L\simeq L$.  
	Applying $\sigma^*$ on both sides, we have an isomorphism 
	\begin{equation}\label{iso1}
		p : \sigma_*L \longrightarrow \sigma^*L. 
	\end{equation} 
	The condition $\sigma(D) = D$ gives rise to an isomorphism 
	\begin{equation}\label{iso2}
		h : L \longrightarrow \sigma^*L.  
	\end{equation}
	Thus 
	$$\sigma_L := p^{-1}\circ\,h : L \longrightarrow \sigma_*L$$
	is a real structure on $L$. 
	Note that the canonical section $s_D \in H^0(X, \mc O_X(D))$ 
	remains fixed under $\sigma_L$, and hence is a real section of $L = \mc O_X(D)$. 
\end{proof}

\begin{remark}\label{rem:global section}
	Note that, the map 
	$$H^0(p) : \Gamma(X,\sigma_*L) = \Gamma(X,L) \longrightarrow \Gamma(X,\sigma^*L)$$
	induced by $p$ at the level of global sections is given by $s\mapsto \sigma^*s$. 
\end{remark}

\begin{proposition}\label{real structure on root stack proposition}
	Fix an integer $r \geq 2$, and a real strict normal crossing divisor 
	$D$ on $(X, \sigma)$. Let 
	$$\pi : \mf X := \mf X_{(\mc O_X(D),\,s_D,\,r)} \longrightarrow X$$ 
	be the stack of $r$-th roots of the generalized Cartier divisor 
	$(\mc O_X(D), s_D)$ on $X$. Then $\mf X$ admits a real structure 
	$\sigma_{\mf{X}} : \mf X \to \mf X$ compatible with the real 
	structure $\sigma$ on $X$ in the sense that the following diagram 
	\begin{equation}\label{eqn:real-str-on-root-stack}
		\begin{gathered}
			\xymatrix{
				\mf{X} \ar[rr]^{\sigma_{\mf{X}}} \ar[d]_{\pi} && \mf{X} \ar[d]^{\pi} \\
				X \ar[rr]^{\sigma} && X 
			}
		\end{gathered}
	\end{equation}
	is $2$-commutative and the two natural transformations 
	$\pi\circ \sigma_{\mf{X}}$ and $\sigma\circ \pi$ are naturally isomorphic 
	(in fact, they are equal). 
\end{proposition}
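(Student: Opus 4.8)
The plan is to construct $\sigma_{\mf X}$ explicitly on objects and morphisms of the root stack, using the real structure $\sigma_L$ on $L = \mc O_X(D)$ furnished by Proposition \ref{prop:real-str-on-O_X(D)}, and then verify that this assignment is a morphism of stacks squaring to the identity up to $2$-isomorphism. An object of $\mf X$ is a tuple $(f : U \to X, M, \phi, t)$ with $\phi : M^{\otimes r} \xrightarrow{\sim} f^*L$ and $\phi(t^{\otimes r}) = f^*s_D$. The idea is to send this to a new tuple whose base morphism is $\sigma \circ f : U \to X$. For the line bundle I would take $M$ itself (no base change on $U$ is needed, only the structure map to $X$ changes), but I must produce a trivialization of $M^{\otimes r}$ against $(\sigma \circ f)^*L = f^*\sigma^*L$. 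Here the isomorphisms $h : L \to \sigma^*L$ of \eqref{iso2} and $p : \sigma_*L \to \sigma^*L$ of \eqref{iso1} enter: pulling back by $f$ gives $f^*h : f^*L \to f^*\sigma^*L$, so I set
\begin{equation*}
	\phi' := (f^*h) \circ \phi : M^{\otimes r} \xrightarrow{\;\sim\;} f^*\sigma^*L = (\sigma\circ f)^*L,
\end{equation*}
and keep the same section $t$. The compatibility $\phi'(t^{\otimes r}) = (\sigma\circ f)^*s_D$ should follow from $\phi(t^{\otimes r}) = f^*s_D$ together with the fact (essentially Remark \ref{rem:global section} and the proof of Proposition \ref{prop:real-str-on-O_X(D)}) that $h$ carries $s_D$ to $\sigma^*s_D$; this is the computation that makes the assignment land inside $\mf X$ rather than in some unrelated root stack.

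Next I would define $\sigma_{\mf X}$ on morphisms. Given $(g, \psi) : (f, M, \phi, t) \to (f', M', \phi', t')$ with $g : U \to U'$ a morphism of $X$-schemes and $\psi : M \to g^*M'$ satisfying $\psi(t) = g^*t'$, note that $g$ is automatically a morphism of $X$-schemes for the \emph{new} structure maps $\sigma \circ f$ and $\sigma \circ f'$ as well, because $\sigma \circ f = \sigma \circ f' \circ g$ follows from $f = f' \circ g$. Hence I can take $\sigma_{\mf X}(g,\psi) := (g, \psi)$ with the same $\psi$; the condition $\psi(t) = g^*t'$ is unchanged since the sections are unchanged, so this is a valid morphism in $\mf X$. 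Functoriality (respecting composition and identities) is then immediate because the underlying data of morphisms is literally preserved.

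For the $2$-commutativity of \eqref{eqn:real-str-on-root-stack}, observe that $\pi$ sends $(f, M, \phi, t)$ to the $X$-scheme $f$, so $\pi \circ \sigma_{\mf X}$ sends it to $\sigma \circ f$, which is exactly $\sigma \circ \pi$ applied to the same object; on morphisms both send $(g,\psi)$ to $g$. Thus the two composite morphisms $\pi \circ \sigma_{\mf X}$ and $\sigma \circ \pi$ agree on the nose, giving the stronger ``in fact equal'' claim. It remains to show $\sigma_{\mf X} \circ \sigma_{\mf X}$ is $2$-isomorphic to $\Id_{\mf X}$. Applying the construction twice turns the trivialization $\phi$ into $(f^*\sigma^*h)\circ(f^*h)\circ\phi$ and changes the base map $f \mapsto \sigma\circ\sigma\circ f = f$. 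The involutivity condition $\sigma_*(\sigma_L)\circ\sigma_L = \Id_L$ from Definition \ref{def:real-structure-on-sheaves-defn}, transported through the canonical isomorphism \eqref{iso1} between $\sigma_*L$ and $\sigma^*L$, should yield a canonical identification $(\sigma^*h)\circ h \cong \Id_L$ (possibly after pulling back by $f$), and this identification is precisely the $2$-isomorphism $\sigma_{\mf X}^2 \Rightarrow \Id_{\mf X}$ exhibited as a morphism $(g,\psi)$ in the stack.

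The main obstacle I anticipate is the careful bookkeeping of the canonical isomorphisms relating $\sigma^*$, $\sigma_*$, and their compositions --- in particular verifying that the square of $h$ (rather than of the real structure $\sigma_L = p^{-1}\circ h$ itself) reduces to the identity, since the involutivity hypothesis is phrased in terms of $\sigma_L$ and the direct-image functor, whereas the root-stack construction naturally uses the pullback description $h : L \to \sigma^*L$. Translating between these two pictures using the adjunction/base-change isomorphism $p$ and checking that no spurious sign or twist appears is where the real content lies; once that coherence is pinned down, the $2$-isomorphism $\sigma_{\mf X}^2 \cong \Id$ and the verification of all stack axioms are formal.
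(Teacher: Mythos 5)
Your construction coincides with the paper's own proof: the object assignment $(f,M,\phi,t)\mapsto(\sigma\circ f, M, (f^*h)\circ\phi, t)$, the verification that $t^{\otimes r}$ maps to $(\sigma\circ f)^*s_D$ via $h(s_D)=\sigma^*s_D$, the identity action $(g,\psi)\mapsto(g,\psi)$ on morphisms, the on-the-nose equality $\pi\circ\sigma_{\mf X}=\sigma\circ\pi$, and the involutivity check reducing to $\sigma^*h\circ h=\Id_L$ through the canonical identification $\sigma^*(\sigma^*L)\cong L$ are all exactly the steps the paper takes. The coherence point you flag at the end is resolved in the paper by the same canonical identification you propose, so there is no gap.
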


\begin{proof}
	Note that a morphism of stacks 
	$\sigma_{\mf X} : \mf X \longrightarrow \mf X$, 
	such that $\sigma_{\mf X}\circ\sigma_{\mf X}$ is $2$-isomorphic to $\Id_{\mf X}$ 
	and the diagram \eqref{eqn:real-str-on-root-stack} is $2$-commutative, 
	is a real structure on $\mf X$ (c.f. Definition \ref{def:real-str-on-a-stack}). 
	Given a scheme $U$ and an object 
	$$\tau := (f : U \to X, M, \phi, t) \in \mf X(U),$$ 
	we define $\sigma_{\mf X}(\tau) \in \mf X(U)$ as follows. 
	Recall that $L := {\mc O}_X(D)$ has a real structure $\sigma_L$ 
	induced by the canonical isomorphism $p$ in \eqref{iso1}. 
	Since $\phi: M^{\otimes r} \stackrel{\simeq}{\longrightarrow} f^*L$ 
	by assumption, pulling back the isomorphism 
	$h : L \stackrel{\simeq}{\longrightarrow} \sigma^*L$ in \eqref{iso2} 
	along $f$ and composing it with $\phi$ we obtain an isomorphism of $\mc O_U$-modules 
	\begin{equation}
		(f^*h)\circ\phi : 
		M^{\otimes r} \overunderset{\phi}{}{\longrightarrow} 
		f^*L \overunderset{f^*h}{}{\longrightarrow} 
		f^*(\sigma^*L) = (\sigma\circ f)^*L,  
	\end{equation}
	which, at the level of global sections, sends $t^{\otimes r}$ to 
	\begin{align*}
		\left((f^*h)\circ \phi\right)(t^{\otimes r}) &= (f^*h)(f^*s_D) \\ 
		&= f^*(h(s_D))\\
		&= f^*\left((p\circ \sigma_L)(s_D)\right)\\
		&= f^*\left(p(s_D)\right), \ \ \ [\text{since }\, \sigma_L(s_D)=s_D.]\\
		&= f^*(\sigma^*(s_D)), \ \ \ [\text{c.f. Remark \ref{rem:global section}.}] \\
		&= (\sigma\circ f)^*(s_D)\,.
	\end{align*}
	Thus, we define $\sigma_{\mf X}(U)(\tau)$ to be the object 
	\begin{align}\label{real structure mapping}
		\sigma_{\mf{X}}(U)(\tau) := \left(\sigma\circ f: U\rightarrow X\,,\, M\,,\, (f^*h)\circ \phi\,,\,t\right)\, \in \mf X(U).
	\end{align}
	To define the action of $\sigma_{\mf X}$ on the arrows, 
	recall that an arrow from $\tau := \left(f : U \to X, M, \phi, t\right)$ 
	to $\tau' := \left(f' : U' \to X, M', \phi', t'\right)$ in $\mf X$ 
	is given by a pair $(g, \psi)$, where 
	\begin{itemize}
		\item $g : U \to U'$ is a morphism of schemes such that $f'\circ g = f$, and 
		\item $\psi : M \stackrel{\simeq}{\longrightarrow} g^* M'$ is an isomorphism 
		of $\mc O_U$-modules such that $\psi(t)= g^*(t')$.
	\end{itemize}
	Since $(\sigma\circ f')\circ g = \sigma\circ f$, 
	it immediately follows that the pair $(g, \psi)$ defines an arrow from 
	$\sigma_{\mf X}(\tau)$ to $\sigma_{\mf X}(\tau')$ in $\mf X$, as required. 
	It is straight-forward to check that $\sigma_{\mf X}$ preserves composition of arrows. 
	Thus, $\sigma_{\mf X} : {\mf X} \longrightarrow {\mf X}$ is a morphism of stacks. 
	Moreover, the composition $\sigma_{\mf X}\circ\sigma_{\mf X}$ takes $\tau$ to the quadruple 
	\begin{align*}
		\left(\sigma\circ(\sigma\circ f)\,,\, M\,,\, ((\sigma\circ f)^*h)\circ(f^*h\circ \phi)\,,\,t\right) 
		&= \left(f\,,\,M\,,\,(f^*\sigma^*h)\circ f^*h\circ \phi\,,\, t\right) \\
		&= \left(f\,,\,M\,,\,f^*(\sigma^*h\circ h)\circ \phi\,,\,t\right) \\
		&= \left(f\,,\,M\,,\,f^*(\textnormal{Id}_L)\circ\phi\,,\,t\right), \,\,\,
	\end{align*}
	where the last equality $\sigma^*h\circ h = \Id_L$ follows from the 
	canonical identification $\sigma^*(\sigma^*L)\cong L$. 
	Then it follows immediately that 
	$\sigma_{\mf X}\circ \sigma_{\mf X} = \Id_{\mf X}$. 
	Moreover, $\sigma_{\mf X}$ lies over $j^* : \Spec\bb{C} \to \Spec\bb{C}$, 
	since composing $f : U \rightarrow X$ with $\sigma : X \rightarrow X$ 
	changes the structure morphism $U \rightarrow \Spec\bb{C}$ of $U$ by $j^*$. 
	Consider $\tau = \left(f : U \to X, M, \phi, t\right) \in \mf X(U)$ as above. 
	Note that, both $(\pi\circ\sigma_{\mf X})(\tau)$ and $(\sigma\circ\pi)(\tau)$ 
	coincide with $\sigma\circ f : U \to X$. 
	If $\tau' := \left(f': U'\to X, M', \phi',t' \right)$ and 
	$(g, \psi) \in {\rm Hom}_{\mf X}(\tau, \tau')$ is an arrow in $\mf X$ as above, 
	it directly follows from the above discussion that both $\pi\circ\sigma_{\mf X}$ 
	and $\sigma\circ\pi$ maps $(g, \psi)$ to $g : U \to U'$. Hence the result follows. 
\end{proof}

The next lemma describes pullback of atlas of $\mf X$ along its real structure. 

\begin{lemma}\label{lem:pullback-of-atlas-under-real-str-of-root-stack}
	Let $U$ be a scheme. Let $u: U \to \mf X$ be the morphism of stacks given by the quadruple 
	$(f : U \to X, M, \phi, t) \in \mf X(U)$, where $f = \pi\circ u$, and let 
	\begin{equation}
		\begin{gathered}
			\xymatrix{
				\sigma^{-1}U := X\times_{\sigma,\,X,\,f}U 
				\ar[rr]^-{\sigma_U} \ar[d]_-{f_{\sigma}} 
				&& U \ar[d]^-f \\ 
				X \ar[rr]^-{\sigma} && X 
			}
		\end{gathered} \nonumber 
	\end{equation}
	be the associated cartesian diagram in schemes. 
	Then there is a morphism 
	\begin{equation}\label{eqn:sigma-inv-U-valued-point-of-root-stack-X}
		u_{\sigma_{\mf X}} : \sigma^{-1}U \longrightarrow \mf X 
	\end{equation} 
	such that $\pi\circ u_{\sigma_{\mf X}} = f_{\sigma}$ and the following diagram 
	is cartesian. 
	\begin{equation}\label{diag:pullback-of-valued-point-of-X-along-real-structure}
		\begin{gathered}
			\xymatrix{
				\sigma^{-1}U \ar[rr]^-{\sigma_U} \ar[d]_-{u_{\sigma_{\mf X}}} 
				&& U \ar[d]^-{u} \\ 
				\mf X \ar[rr]^-{\sigma_{\mf X}} && \mf X 
			}
		\end{gathered}
	\end{equation}
	In particular, $\mf X\times_{\sigma_{\mf X}, \mf X, u} U \cong \sigma^{-1}U$ as $\bb C$-schemes. 
	%
		%
\end{lemma}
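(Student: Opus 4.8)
The plan is to build $u_{\sigma_{\mf X}}$ as the composite $\sigma_{\mf X}\circ u\circ\sigma_U$ and then to extract every assertion of the lemma from Proposition~\ref{real structure on root stack proposition}, the decisive observation being that both horizontal arrows of the square in \eqref{diag:pullback-of-valued-point-of-X-along-real-structure} are equivalences.

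First I would set $u_{\sigma_{\mf X}} := \sigma_{\mf X}\circ u\circ\sigma_U : \sigma^{-1}U\to\mf X$. Unwinding the description of $\sigma_{\mf X}$ on objects in \eqref{real structure mapping}, one sees that $u_{\sigma_{\mf X}}$ is represented by the quadruple
\[
	\left(f_\sigma : \sigma^{-1}U\to X,\ \sigma_U^*M,\ (f_\sigma^*h)^{-1}\circ\sigma_U^*\phi,\ \sigma_U^*t\right),
\]
where I use the cartesian identity $\sigma\circ f_\sigma = f\circ\sigma_U$ together with the relation $\sigma^*h\circ h = \Id_L$ (from the proof of Proposition~\ref{real structure on root stack proposition}) to identify the twisting isomorphism. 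In particular $\pi\circ u_{\sigma_{\mf X}} = \pi\circ\sigma_{\mf X}\circ u\circ\sigma_U = \sigma\circ\pi\circ u\circ\sigma_U = \sigma\circ f\circ\sigma_U = f_\sigma$, using the equality $\pi\circ\sigma_{\mf X}=\sigma\circ\pi$ recorded in Proposition~\ref{real structure on root stack proposition} and $\sigma=\sigma^{-1}$. This yields the first assertion.

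Next I would check that the square \eqref{diag:pullback-of-valued-point-of-X-along-real-structure} is $2$-commutative, which is immediate from the definition of $u_{\sigma_{\mf X}}$: since $\sigma_{\mf X}\circ\sigma_{\mf X}=\Id_{\mf X}$ by Proposition~\ref{real structure on root stack proposition}, we get
\[
	\sigma_{\mf X}\circ u_{\sigma_{\mf X}} = \sigma_{\mf X}\circ\sigma_{\mf X}\circ u\circ\sigma_U = u\circ\sigma_U,
\]
which is exactly the compatibility required for the two composites around the square.

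Finally, for the cartesian property I would argue as follows. Because $\sigma : X\to X$ is an isomorphism, its base change $\sigma_U : \sigma^{-1}U\to U$ along $f$ is an isomorphism of schemes, and $\sigma_{\mf X}$ is an equivalence, being an involution up to $2$-isomorphism. The comparison morphism $\Phi : \sigma^{-1}U\to\mf X\times_{\sigma_{\mf X},\mf X,u}U$ induced by $u_{\sigma_{\mf X}}$, $\sigma_U$ and the $2$-isomorphism above satisfies $\mathrm{pr}_2\circ\Phi\cong\sigma_U$. Since $\mathrm{pr}_2$ is the base change of the equivalence $\sigma_{\mf X}$ along $u$, it is itself an equivalence; as $\sigma_U$ is an isomorphism, the two-out-of-three property for equivalences forces $\Phi$ to be an equivalence. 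Hence the square is $2$-cartesian and $\mf X\times_{\sigma_{\mf X},\mf X,u}U\cong\sigma^{-1}U$ as $\bb C$-schemes. The only genuine delicacy lies in the $2$-categorical bookkeeping: once one records that both horizontal arrows are equivalences the cartesianness is formal, so I expect the main care to go into checking that the quadruple defining $u_{\sigma_{\mf X}}$ is twisted by $h$ in the correct way, so that the $2$-isomorphism $\sigma_{\mf X}\circ u_{\sigma_{\mf X}}\cong u\circ\sigma_U$ is the canonical one rather than merely an abstract identification.
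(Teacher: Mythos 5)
Your proof is correct, but it reaches the lemma by a genuinely different route than the paper, in both halves. For the construction of $u_{\sigma_{\mf X}}$, the paper writes down the quadruple $\left(f_\sigma,\,\sigma_U^*M,\,\phi_\sigma,\,\sigma_U^*t\right)$ by hand, with $\phi_\sigma = f_\sigma^*(\widehat{\sigma_L})\circ\sigma_U^*\phi$ as in \eqref{def:pullback-of-isom-phi-along-real-structure-of-X}, and then verifies through an explicit chain of equalities that $\phi_\sigma(\sigma_U^*t^{\otimes r}) = f_\sigma^*s_D$, so that the quadruple is a genuine object of $\mf X(\sigma^{-1}U)$. You instead define $u_{\sigma_{\mf X}} := \sigma_{\mf X}\circ u\circ\sigma_U$, which is a morphism of stacks by construction, so the section condition holds automatically; your unwinding (via $f\circ\sigma_U = \sigma\circ f_\sigma$ and $\sigma^*h\circ h = \Id_L$) recovers exactly the paper's quadruple, since $\widehat{\sigma_L}$ is $h^{-1}$ under the canonical identification $\sigma^*\sigma^*L \cong L$ — and it is good that you recorded this, because the explicit form of the quadruple (in particular $u_{\sigma_{\mf X}}^*\ms M = \sigma_U^*M$ and the formula for $\phi_\sigma$) is what gets used downstream in Proposition \ref{prop:real-structure-on-tautological-line-bundle} and Lemma \ref{real structure preserving isomorphism}. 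For the cartesian property, the paper argues concretely on $T$-points: a triple $(x,g,\gamma)$ of $(\mf X\times_{\sigma_{\mf X},\,\mf X,\,u}U)(T)$ forces the square over $X$ to commute because morphisms in $\mf X(T)$ lie over $\Id_T$, whence a functorial bijection with $(\sigma^{-1}U)(T)$; you replace this with the purely formal observation that a $2$-commutative square whose two horizontal arrows are equivalences is automatically $2$-cartesian (the base change $\mathrm{pr}_2$ of the equivalence $\sigma_{\mf X}$ along $u$ is an equivalence, and two-out-of-three applied to $\mathrm{pr}_2\circ\Phi = \sigma_U$ forces the comparison map $\Phi$ to be one). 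Your route is shorter and cleanly isolates the only real inputs, namely the strict equalities $\pi\circ\sigma_{\mf X} = \sigma\circ\pi$ and $\sigma_{\mf X}\circ\sigma_{\mf X} = \Id_{\mf X}$ from Proposition \ref{real structure on root stack proposition}; the paper's computation buys an explicit, element-level description of the fiber product that keeps the later atlas manipulations concrete. Both are complete proofs of the statement.
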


\begin{proof}
	Recall that the real structure on $L =\mc O_X(D)$ induces an isomorphism of line bundles 
	$\widehat{\sigma_L} : \sigma^*L \longrightarrow L$ 
	on $X$, whose pullback along $f_{\sigma}$ gives an isomorphism of invertible sheaves 
	$f_{\sigma}^*(\widehat{\sigma_L}) : f_{\sigma}^*\sigma^*L \longrightarrow f_{\sigma}^*L$. 
	Then the composite map 
	\begin{equation}\label{def:pullback-of-isom-phi-along-real-structure-of-X}
		\phi_{\sigma} 
		: \sigma_U^*M^{\otimes r} \stackrel{\sigma^*_U\phi}{\longrightarrow} 
		f_\sigma^*\sigma^*L \stackrel{f_{\sigma}^*(\widehat{\sigma_L})}{\longrightarrow} 
		f_{\sigma}^*L 
	\end{equation}
	is an isomorphism of invertible sheaves on $\sigma^{-1}U$. 
	We claim that $\phi_{\sigma}$ sends the section 
	$\sigma_U^*t^{\otimes r}$ to $f_{\sigma}^*s_D$, where $s_D$ is the 
	canonical section of $L = \mc O_X(D)$. 
	Note that, at the level of global sections $\widehat{\sigma_L}$ 
	is given by the composition 
	\begin{align*}
		\Gamma(X,\sigma^*L) & \longrightarrow \Gamma(X,\sigma^*\sigma_*L) 
		\longrightarrow \Gamma(X,L)\\ 
		\sigma^*(s_D) & \longmapsto \sigma^*(\widehat{\sigma_L}(s_D)) 
		\ \ \longmapsto\ \ \sigma_L(s_D). 
	\end{align*}
	Therefore, we have 
	\begin{align*}
		\phi_{\sigma}(\sigma_U^*t^{\otimes r}) 
		&= \left(f_{\sigma}^*(\widehat{\sigma_L})\circ(\sigma^*_U\phi)\right)
		(\sigma_U^*t^{\otimes r}) \\ 
		&= f_{\sigma}^*(\widehat{\sigma_L})\sigma_U^*\big(\phi(t^{\otimes r})\big) \\ 
		&= f_{\sigma}^*(\widehat{\sigma_L})\sigma_U^*(f^*s_D), 
		\ \ \ \text{since } \phi(t^{\otimes r}) = f^*s_D. \\ 
		&= f_{\sigma}^*(\widehat{\sigma_L}) f_\sigma^*\sigma^*(s_D) \\ 
		&= f_\sigma^*\left(\widehat{\sigma_L}(\sigma^* s_D)\right) \\ 
		&= f_\sigma^*\left(\sigma_L(s_D)\right) \\ 
		&= f_\sigma^*(s_D), \ \ \ \text{since } \ \sigma_L(s_D) = s_D. 
	\end{align*}
	Therefore, the quadruple 
	\begin{equation}\label{def:pullback-of-atlas-along-real-str-of-root-stack}
		\sigma^*u := 
		\left(f_{\sigma} : \sigma^{-1}U \to X,\, \sigma_U^{-1}M,\, 
		\phi_{\sigma},\, \sigma_U^*(t)\right) 
	\end{equation}
	is an object of $\mf X(\sigma^{-1}(U))$ which gives a morphism 
	$u_{\sigma_{\mf X}} : \sigma^{-1}U  \to \mf X$ such that 
	$\pi\circ u_{\sigma_{\mf X}} = f_\sigma$. 
	
	It remains to show that the diagram in 
	\eqref{diag:pullback-of-valued-point-of-X-along-real-structure} is cartesian. 
	First of all, since $\sigma_{\mf X}$ is an isomorphism, it is representable 
	and the fiber product $\mf X\times_{\sigma_{\mf X}, \mf X, u} U$ 
	is a scheme. 
	Given any scheme $T$, the objects of 
	$(\mf X\times_{\sigma_{\mf X}, \mf X, u} U)(T)$ 
	are triples $(x, g, \gamma)$, where $x \in \mf X(T)$, $g \in U(T) = \Hom(U, T)$ 
	and $\gamma : u(g) \longrightarrow \sigma_{\mf X}(x)$ is an isomorphism in $\mf X(T)$. 
	Note that, 
	$u(g)$ is given by the quadruple $(f\circ g : T\rightarrow X, g^*M, g^*\phi, g^*t) \in \mf X(T)$. 
	If $x$ is given by the quadruple $(\alpha : T \to X, M', \phi', t') \in \mf X(T)$, then 
	$\sigma_{\mf X}(x)$ is given by the quadruple 
	$(\sigma\circ\alpha : T \to X, M', (f^*h)\circ\phi', t')$ (see \eqref{real structure mapping}). 
	Note that the first component of $\sigma_{\mf X}(x)$ is $\sigma\circ\alpha : T \rightarrow X$ by definition. 
	Since morphisms in ${\mf X}(T)$ 
	are lying over the identity morphism $\Id_T$ of $T$, 
	this immediately implies 
	that the diagram 
	\begin{align*}
		\xymatrix{
			T \ar[rr]^{g} \ar[d]_{\alpha} && U \ar[d]^{f} \\
			X \ar[rr]^{\sigma} && X
		}
	\end{align*}
	commutes. 
	Then by universal property of fiber product of morphisms of schemes, 
	both $g$ and $\alpha$ factor through a unique morphism $T \rightarrow \sigma^*U$. 
	This gives a bijection 
	\begin{align}
		\left(U\times_{\sigma_{\mf X},\,\mf X,\,u} U\right)(T) \stackrel{\simeq}{\longrightarrow} (\sigma^*U)(T) 
	\end{align}
	which is functorial in $T$, and hence the diagram 
	\eqref{diag:pullback-of-valued-point-of-X-along-real-structure} is cartesian. 
	It follows that $\mf X\times_{\sigma_{\mf X}, \mf X, u} U$ is isomorphic to $\sigma^{-1}U$ as $\bb C$-schemes. 
\end{proof}

\subsection{Real structure on the tautological line bundle on root stack}\label{sec:real-str-on-the-tautological-line-bundle}
Recall that 
if $\varphi : \mc{X} \rightarrow \mc{Y}$ is a representable morphism of 
Deligne-Mumford stacks, the direct image of a coherent sheaf $\mc F$ of 
$\mc{O}_{\mc X}$-modules along $\varphi$ can be described over \'etale 
atlases as follows: 
since $\varphi$ is a representable morphism of DM stacks, 
given any \'etale atlas $\psi : U \rightarrow {\mc Y}$ of $\mc Y$, 
the fiber product 
$$
\xymatrix{
\varphi^*U := \mc X\times_{\mc Y}U 
\ar[rr]^-{\psi_{\mc X}} && \mc X 
}
$$ 
is again an \'etale atlas of $\mc X$. 
\begin{equation}
	\begin{gathered}
		\xymatrix{
			\mc X\times_{\mc Y} U \ar[rr]^{\varphi_U} \ar[d]_{\psi_{\mc X}} && U \ar[d]^{\psi} \\
			\mc{X} \ar[rr]^{\varphi} && \mathcal{Y}
		} 
		\nonumber 
	\end{gathered}
\end{equation} 
Then $\mc F$ is represented by a coherent sheaf $F$ 
on $\mc X\times_{\mc Y}U$, 
and we define the direct image of $\mc F$ along $\varphi$ to be the sheaf of 
$\mc{O}_{\mc Y}$-modules $\varphi_*\mc{F}$ defined by $(\varphi_U)_*F$ on $(U, \psi)$. 
The compatibility conditions for different choices of atlases can be easily checked 
(c.f. \cite[\S\,9]{Ols16}). 

\begin{definition}
	Let $\mc X$ be an algebraic stack over $\bb C$ admitting a 
	real structure $\sigma_{\mc X}$. 
	Let $\mc F$ be a sheaf of $\mc O_{\mc X}$-modules on $\mc X$. 
	A \textit{real structure} on $\mc F$ is an ${\mc O}_{\mc X}$-module isomorphism 
	\begin{equation}
		\sigma_{\mc F} : \mc F \to (\sigma_{\mc X})_*\mc F \nonumber 
	\end{equation}
	such that 
	$\left(\big(\sigma_{\mc X}\big)_*\sigma_{\mc F}\right)\circ\sigma_{\mc F} 
	= \Id_{\mc F}$ (c.f. Definition \ref{def:real-structure-on-sheaves-defn}). 
\end{definition}



Continuing with the same notations and assumptions from the last subsection, 
let $(\ms M, \xi)$ be the tautological generalized Cartier divisor on the 
root stack $\mf X = \mf X_{(L,\, s_D,\, r)}$, where $L := \mc O_X(D)$.  

\begin{proposition}\label{prop:real-structure-on-tautological-line-bundle}
	With the above notations, 
	the tautological invertible sheaf $\ms{M}$ on $\mf{X}$ admits a real structure 
	$\sigma_{\ms M}$ coming from the real structure $\sigma_{\mf X}$ on ${\mf X}$, 
	and that $\xi$ is a real section. Consequently, $(\ms M, \xi)$ admits a real structure. 
\end{proposition}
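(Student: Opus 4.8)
The plan is to exploit the explicit formula \eqref{real structure mapping} for $\sigma_{\mf X}$, whose decisive feature is that it \emph{leaves the line-bundle component $M$ and the section $t$ of an object $\tau = (f, M, \phi, t)$ unchanged}, altering only the structure morphism $f \mapsto \sigma\circ f$ and the trivialisation $\phi \mapsto (f^*h)\circ\phi$. Since by construction the tautological sheaf $\ms M$ assigns to $\tau$ the line bundle $M$ and $\xi$ assigns the section $t$, the values of $\ms M$ on $\tau$ and on $\sigma_{\mf X}(\tau)$ literally coincide. This structural observation is what will produce the real structure: there is nothing to move, only to compare.

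First I would record that, because $\sigma_{\mf X}$ is an involutive isomorphism of stacks, $(\sigma_{\mf X})_*\ms M = \sigma_{\mf X}^*\ms M$, and that for an object $\tau \in \mf X(U)$ with associated morphism $u_\tau : U \to \mf X$, pulling $\sigma_{\mf X}^*\ms M$ back along $u_\tau$ computes $\ms M$ along $\sigma_{\mf X}\circ u_\tau = u_{\sigma_{\mf X}(\tau)}$, i.e.\ the line bundle of $\sigma_{\mf X}(\tau)$, which is again $M$. The identity $\Id_M$ therefore furnishes, for each $\tau$, a comparison isomorphism between the values of $\ms M$ and of $(\sigma_{\mf X})_*\ms M$, and I would \emph{define} $\sigma_{\ms M} : \ms M \to (\sigma_{\mf X})_*\ms M$ to be this collection of identities. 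Making this precise requires the \'etale atlas description of the direct image along the representable morphism $\sigma_{\mf X}$ (as recalled at the beginning of Subsection \ref{sec:real-str-on-the-tautological-line-bundle}) together with Lemma \ref{lem:pullback-of-atlas-under-real-str-of-root-stack}, which identifies the pulled-back atlas $\sigma^{-1}U$ and the pulled-back object $\sigma^* u$; over such an atlas $\sigma_{\ms M}$ is the canonical comparison isomorphism, and one checks it respects change of atlas and descends to an $\mc O_{\mf X}$-module morphism on $\mf X$.

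Next I would verify the two defining properties. The involution identity $\left((\sigma_{\mf X})_*\sigma_{\ms M}\right)\circ\sigma_{\ms M} = \Id_{\ms M}$ is immediate from the construction: both factors act by the identity on the common component $M$, and the composite $\sigma_{\mf X}\circ\sigma_{\mf X} = \Id_{\mf X}$ holds strictly (as shown in the proof of Proposition \ref{real structure on root stack proposition}, using $\sigma^*h\circ h = \Id_L$), so no nontrivial coherence datum intervenes. That $\xi$ is real, i.e.\ $\sigma_{\ms M}(\xi) = \xi$, is equally immediate, since $\xi$ corresponds to $t$, which is fixed by $\sigma_{\mf X}$ and on which $\sigma_{\ms M}$ acts by the identity. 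Finally, passing to $r$-th tensor powers and composing with the tautological isomorphism $\ms M^{\otimes r}\cong \pi^*L$, the factor $f^*h$ appearing in \eqref{real structure mapping} is precisely the one induced by $h$ in \eqref{iso2}; hence $\sigma_{\ms M}^{\otimes r}$ matches the real structure $\pi^*\sigma_L$ on $\pi^*L$ transported via $\pi\circ\sigma_{\mf X} = \sigma\circ\pi$, confirming that $(\ms M, \xi)$ carries a real structure compatible with that of $(L, s_D)$.

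The main obstacle is not conceptual but bookkeeping: one must convert the tautological, object-wise prescription ``$\sigma_{\ms M}$ is the identity on the $M$-component'' into an honest $\mc O_{\mf X}$-module isomorphism $\ms M \to (\sigma_{\mf X})_*\ms M$ through the \'etale atlas computation of the direct image, and confirm that the comparison isomorphisms glue independently of the chosen atlas. Once the atlas-level identification of Lemma \ref{lem:pullback-of-atlas-under-real-str-of-root-stack} is in hand, every verification collapses to the single observation that $\sigma_{\mf X}$ fixes $M$ and $t$, so the remaining checks are routine.
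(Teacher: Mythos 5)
Your proposal is correct and takes essentially the same route as the paper's proof: both rest on Lemma \ref{lem:pullback-of-atlas-under-real-str-of-root-stack} together with the \'etale-atlas description of $(\sigma_{\mf X})_*$, and both reduce to the observation that $\sigma_{\mf X}$ leaves the $M$- and $t$-components of an object untouched, your object-wise identity comparison being the paper's isomorphism $M \simeq (\sigma_U)_*(\sigma_U^*M)$ in disguise (as $\sigma_U$ is an isomorphism). If anything, you spell out the involution identity and the realness of $\xi$ more explicitly than the paper, which disposes of them with ``the second part follows.''
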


\begin{proof}
	Let $u : U \rightarrow \mf{X}$ be an \'etale atlas of $\mf X$ defined by the quadruple 
	$(f : U \to X, M, \phi, t)$, so that $\ms{M}_u := u^*\ms M = M$ on $U$. 
	It follows from Lemma \ref{lem:pullback-of-atlas-under-real-str-of-root-stack} 
	that the fiber product $U\times_{\sigma_{\mf X},\,\mf{X},\,u} U \cong \sigma^*U$ 
	and the invertible sheaf $u_{\sigma_{\mf X}}^*\ms{M}$ on the pull-back atlas 
	$u_{\sigma_{\mf X}} : \sigma^*U \rightarrow \mf{X}$ is given by $\sigma_U^*M$. 
	Then it follows that  
	$$\left((\sigma_{\mf X})_*\ms{M}\right)_u = (\sigma_U)_*(\ms{M}_{u'}) = (\sigma_U)_*(\sigma_U^*M).$$ 
	Since $\sigma: X \rightarrow X$ is an isomorphism, so is the pull-back map 
	$\sigma_U : \sigma^*U \rightarrow U$. Then we have an isomorphism of invertible sheaves 
	$M\simeq(\sigma_U)_*(\sigma_U^*M)$, which gives an isomorphism 
	$\ms{M}_u \simeq \left((\sigma_{\mf{X}})_*\ms{M}\right)_u$ 
	over the \'etale atlas $u : U \to \mf X$ of $\mf X$. 
	This produces a real structure 
	\begin{equation}\label{def:real-str-on-tautological-invertible-sheaf-on-root-stack}
		\sigma_{\ms M} : \ms{M} \stackrel{\simeq}{\longrightarrow} (\sigma_{\mf{X}})_*\ms{M} 
	\end{equation}
	on $\ms{M}$, as required. The second part follows.  
\end{proof}

%

As before, let $L := \mc O_X(D)$, and 
let $u : U \rightarrow \mf X$ be an \'etale atlas of $\mf X := \mf X_{(L,\, s_D,\, r)}$ 
given by the quadruple $(f : U \rightarrow X, M, \phi, t) \in \mf X(U)$. 
The isomorphism $\phi : M^{\otimes r} \longrightarrow f^*L$ 
give rise to an isomorphism of invertible sheaves 
\begin{equation}\label{eqn:canonical-isom-of-M^r-with-pullback-of-O_X(D)}
	\psi : {\ms M}^{\otimes r} \longrightarrow \pi^*L
\end{equation}
on $\mf X$. 
The next lemma says that the above isomorphism is compatible with their respective 
real structures $\sigma_{{\ms M}^{\otimes r}}$ and $\pi^*(\sigma_{L})$. 

\begin{lemma}\label{real structure preserving isomorphism}
	With the above notations, the canonical isomorphism of invertible sheaves 
	in \eqref{eqn:canonical-isom-of-M^r-with-pullback-of-O_X(D)} is compatible 
	with the real structures $\sigma_{{\ms M}^{\otimes r}}$ and $\pi^*(\sigma_{L})$ 
	on ${\ms M}^{\otimes r}$ and $\pi^*L$, respectively. 
\end{lemma}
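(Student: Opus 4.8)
The plan is to check directly that $\psi$ satisfies the compatibility condition of Definition \ref{def:real-structure-on-sheaves-defn}(ii), i.e.\ that the square
\begin{align*}
\xymatrix{
{\ms M}^{\otimes r} \ar[rr]^-{\psi} \ar[d]_-{\sigma_{{\ms M}^{\otimes r}}} && \pi^*L \ar[d]^-{\pi^*(\sigma_L)} \\
(\sigma_{\mf X})_*{\ms M}^{\otimes r} \ar[rr]^-{(\sigma_{\mf X})_*\psi} && (\sigma_{\mf X})_*\pi^*L
}
\end{align*}
commutes, where $\sigma_{{\ms M}^{\otimes r}} = \sigma_{\ms M}^{\otimes r}$. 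Every sheaf, every real structure and the isomorphism $\psi$ appearing here are defined through their restrictions to an \'etale atlas and satisfy the usual compatibility under change of atlas (as recorded in the description of direct images along representable morphisms above). Hence it suffices to prove that the restriction of this square to one \'etale atlas $u : U \to \mf X$ commutes, functorially in $U$; the functoriality is routine, so the whole content lies in the commutativity over $U$.

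Next I would unwind the four arrows over the atlas $u$ given by $(f : U \to X, M, \phi, t)$. One has $({\ms M}^{\otimes r})_u = M^{\otimes r}$ and $(\pi^*L)_u = f^*L$ since $\pi\circ u = f$, and by construction \eqref{eqn:canonical-isom-of-M^r-with-pullback-of-O_X(D)} the isomorphism $\psi$ restricts to the atlas datum $\phi : M^{\otimes r}\to f^*L$. By Proposition \ref{prop:real-structure-on-tautological-line-bundle} the real structure $\sigma_{\ms M}$ restricts to the canonical (unit) isomorphism $\eta_M : M \xrightarrow{\sim} (\sigma_U)_*\sigma_U^*M$, so $\sigma_{{\ms M}^{\otimes r}}$ restricts to $\eta_{M^{\otimes r}}$. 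The essential arrow is $(\sigma_{\mf X})_*\psi$: computing direct image along $\sigma_{\mf X}$ through the cartesian square of Lemma \ref{lem:pullback-of-atlas-under-real-str-of-root-stack}, it restricts to $(\sigma_U)_*\phi_\sigma$, where $\phi_\sigma = f_\sigma^*(\widehat{\sigma_L})\circ\sigma_U^*\phi$ is the twisted isomorphism of \eqref{def:pullback-of-isom-phi-along-real-structure-of-X}. Finally $\pi^*(\sigma_L)$ restricts to $f^*(\sigma_L)$, read off through the base-change identification $\sigma_U^*f^*L \cong f_\sigma^*\sigma^*L$ coming from $f\circ\sigma_U = \sigma\circ f_\sigma$.

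With these identifications the commutativity becomes a short diagram chase. Using naturality of the unit $\eta_{(-)}$ one rewrites the left-then-bottom composite as $(\sigma_U)_*\big(f_\sigma^*(\widehat{\sigma_L})\big)\circ\eta_{f^*L}\circ\phi$, so that, $\phi$ being invertible, the square reduces to the single identity $f^*(\sigma_L) = (\sigma_U)_*\big(f_\sigma^*(\widehat{\sigma_L})\big)\circ\eta_{f^*L}$. This is exactly the $f$-pullback of the relation between the direct-image real structure $\sigma_L$ and its pullback avatar $\widehat{\sigma_L}$ recorded in the proof of Lemma \ref{lem:pullback-of-atlas-under-real-str-of-root-stack}. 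In effect, the twist $f_\sigma^*(\widehat{\sigma_L})$ that was built into $\phi_\sigma$ is precisely what encodes $\sigma_L$, so the compatibility asserted here was already designed into the constructions of $\sigma_{\mf X}$ and $\sigma_{\ms M}$.

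I expect the only real difficulty to be bookkeeping rather than substance: one must track several canonical isomorphisms --- the base change $\sigma_U^*f^*L \cong f_\sigma^*\sigma^*L$, the self-identification $\sigma^*(\sigma^*L) \cong L$ relating $h$, $p$ and $\widehat{\sigma_L}$, and the $\sigma_*$ versus $\sigma^*$ direction conventions --- and confirm that they assemble so that both composites $M^{\otimes r}\to (\sigma_U)_*f_\sigma^*L$ literally agree. The remaining point, that the atlas-level verification is independent of the chosen atlas and hence descends to $\mf X$, follows from the same functoriality already used to define $\sigma_{\ms M}$ and direct images along $\sigma_{\mf X}$.
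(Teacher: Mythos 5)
Your proposal is correct and follows essentially the same route as the paper: both restrict the square to an \'etale atlas $u$ given by $(f:U\to X,M,\phi,t)$, identify the pullback of $(\sigma_{\mf X})_*\psi$ with $(\sigma_U)_*\bigl(f_\sigma^*(\widehat{\sigma_L})\circ\sigma_U^*\phi\bigr)$ via Lemma \ref{lem:pullback-of-atlas-under-real-str-of-root-stack}, and close the chase using naturality of the canonical isomorphism $M^{\otimes r}\to(\sigma_U)_*\sigma_U^*M^{\otimes r}$. The paper packages the final step as two juxtaposed commuting squares rather than your single reduced identity $f^*(\sigma_L)=(\sigma_U)_*\bigl(f_\sigma^*(\widehat{\sigma_L})\bigr)\circ\eta_{f^*L}$, but the content is identical.
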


\begin{proof}
	Let $u : U \rightarrow \mf X$ be an \'etale atlas of $\mf X$ given by the quadruple 
	$(f : U \rightarrow X, M, \phi, t)$. Note that the pullback of the isomorphism 
	$$\psi : {\ms M}^{\otimes r} \longrightarrow \pi^*L$$ 
	along $u$ is the isomorphism $$\phi : M^{\otimes r} \longrightarrow f^*L,$$ 
	whereas the pullback of the isomorphism 
	$(\sigma_{\mf X})_*\psi : (\sigma_{\mf X})_*\ms M^{\otimes r} 
	\longrightarrow (\sigma_{\mf X})_*(\pi^*L)$ 
	along $u$ is the isomorphism 
	$$(\sigma_U)_*\left((f_\sigma^*\widehat{\sigma_L})\circ(\sigma_U^*\phi)\right) : 
	(\sigma_U)_*(\sigma_U^*M^{\otimes r}) \longrightarrow (\sigma_U)_*(f_\sigma^*L),$$ 
	(see \eqref{def:pullback-of-isom-phi-along-real-structure-of-X} and 
	\eqref{def:pullback-of-atlas-along-real-str-of-root-stack} 
	in Lemma \ref{lem:pullback-of-atlas-under-real-str-of-root-stack}). 
	Note that
	\begin{align}
		(\sigma_U)_*((f_\sigma^*\widehat{\sigma_L})\circ(\sigma_U^*\phi)) 
		= (\sigma_U)_*(f_\sigma^*\widehat{\sigma_L})\circ(\sigma_U)_*(\sigma_U^*\phi).
	\end{align}
	To show that $\psi$ is compatible with the real structures under consideration, 
	we need to show that the following diagram commutes. 
	\begin{equation}
		\begin{gathered}
			\xymatrix{
				\ms M^{\otimes r} \ar[dd]^{\psi} \ar[rr]^-{\sigma_{\ms M^{\otimes r}}} 
				&& \left(\sigma_{\mf X}\right)_*(\ms M^{\otimes r}) 
				\ar[dd]^{\left(\sigma_{\mf X}\right)_*\psi} \\
				&& \\
				\pi^*L \ar[rr]^-{\pi^*(\sigma_L)} && \left(\sigma_{\mf X}\right)_*(\pi^*L) 
			}
		\end{gathered}
	\end{equation}
	Note that the pullback of this diagram along $u$ coincides with the 
	outermost square of the following diagram: 
	\begin{equation*}
		\begin{gathered}
			\xymatrix{
				M^{\otimes r} \ar[rr]^-{\simeq} \ar[dd]_{\phi} && (\sigma_U)_*(\sigma_U^*M^{\otimes r}) 
				\ar[dd]_{(\sigma_U)_*(\sigma_U^*\phi)} \ar[rrr]^{\Id} &&& (\sigma_U)_*(\sigma_U^*M^{\otimes r}) 
				\ar[dd]^{(\sigma_U)_*((f_\sigma^*\widehat{\sigma_L})\circ(\sigma_U^*\phi))} \\
				&&&&& \\
				f^*L \ar[rr]^-{\simeq} && (\sigma_U)_*(\sigma_U^*f^*L) 
				\ar[rrr]^{(\sigma_U)_*(f_\sigma^*\widehat{\sigma_L})} &&& (\sigma_U)_*(f_\sigma^*L) 
			}
		\end{gathered}
	\end{equation*}
	Since the leftmost and rightmost squares of this diagram commutes, 
	we conclude that the outermost rectangle also commutes. 
	This proves our claim. 
\end{proof}


\section{Biswas-Borne Correspondence} 

\subsection{Real structures on parabolic vector bundles}
We recall the definition of parabolic vector bundles 
\cite{MY92, Yok95, Biswas97, Born07}.  
Fix a positive integer $r$, and consider the ordered set $\frac{1}{r}\bb{Z}$ as a category, 
where there is an arrow $a \rightarrow b$ in $\frac{1}{r}\bb{Z}$ if and only if $a \leq b$. 
Let $\Vect(X)$ be the category of algebraic vector bundles on $X$. 

\begin{definition}\label{def:parabolic-vector-bundle}
	A {\it parabolic vector bundle} on $X$ {\it with parabolic structure along} $D$ and 
	{\it rational weights in $\frac{1}{r}\bb Z$} is a pair $(E_\bullet, {\mf j}_{E_\bullet})$, 
	where 
	\begin{equation*}
		E_\bullet : \left(\frac{1}{r}\bb{Z}\right)^{\rm op} \longrightarrow \Vect(X)
	\end{equation*}
	is a functor and 
	$${\mf j}_{E_\bullet} : E_\bullet\otimes{\mc O}_X(-D) 
	\stackrel{\simeq}{\longrightarrow} E_{\bullet+1}$$ 
	is a natural isomorphism of functors such that the following diagram commutes: 
	\begin{equation}\label{diag:parabolic-bundle-definition}
		\begin{gathered}
			\xymatrix{
				E_\bullet\otimes\mc O_X(-D) \ar[rr]^-{{\mf j}_{E_\bullet}} 
				\ar[dr]_-{} && E_{\bullet+1} \ar[dl]^-{} \\ 
				& E_\bullet & 
			}
		\end{gathered}
	\end{equation}
	We drop the symbol ${\mf j}_{E_\bullet}$ from $(E_\bullet, {\mf j}_{E_\bullet})$, 
	and simply write it as $E_\bullet$ unless we need its reference, 
	and call them simply as {\it parabolic bundles}, 
	whenever the data of $D$ and $r$ are clear from the context. 
	A morphism of parabolic bundles from $E_{\bullet}$ and $F_{\bullet}$ is a 
	natural transformation of functors compatible with the diagram 
	\eqref{diag:parabolic-bundle-definition} in the obvious sense. 
	We denote by $\ParVect_{\frac{1}{r}}(X, D)$ the category of parabolic vector bundles 
	on $X$ with parabolic structure along $D$ and having rational weights in $\frac{1}{r}\bb Z$. 
\end{definition}
Next we define the notion of a real structure on a parabolic vector bundles. 
For this, note that the direct image $\sigma_*$ defines a functor in an obvious way, 
also denote by the same symbol: 
\begin{equation}
	\sigma_* : \Vect(X) \longrightarrow \Vect(X). 
\end{equation}

\noindent
As in \S\ref{sec:real-structures-preliminaries}, 
henceforth we assume the $X$ is equipped with a real structure $\sigma$ with $\sigma(D) = D$. 

\begin{definition}\label{real def:parabolic-vector-bundle}
	A {\it real structure} on a parabolic vector bundle $E_\bullet \in \ParVect_{\frac{1}{r}}(X, D)$ 
	is a natural isomorphism of functors 
	\begin{equation}
		\sigma_{E_\bullet} : E_\bullet \longrightarrow \sigma_*E_\bullet 
	\end{equation}
	such that $\sigma_*(\sigma_{E_\bullet})\circ\sigma_{E_\bullet} = \Id_{E_\bullet}$, 
	where $\sigma_*(\sigma_{E_\bullet})$ and its direct image under $\sigma$ have 
	their usual meaning as in the last section. 
	A {\it real parabolic vector bundle} on $(X, \sigma, D)$ is a pair 
	$(E_\bullet, \sigma_{E_\bullet})$ consisting of a parabolic bundle 
	$E_\bullet \in \ParVect_{\frac{1}{r}}(X, D)$ and a real structure 
	$\sigma_{E_\bullet}$ on it. 
\end{definition}
Note that a real structure $\sigma_{E_\bullet}$ on $(E_\bullet, \mf j_{E_\bullet})$ 
induces a real structure $\sigma_{E_{\ell/r}}$ on $E_{\ell/r}$, for each $\ell \in \bb Z$, 
which makes the following diagram commutative: 
\begin{equation}
	\begin{gathered}
		\xymatrix{
			E_{\frac{\ell}{r}}\otimes{\mc O}_X(-D) 
			\ar[rrr]^{{\sigma}_{E_{\frac{\ell}{r}}}\,\otimes\,\sigma_{{\mc O}_X(-D)}} 
			\ar[d]_-{\mf j_{E_{\ell/r}}} &&& 
			\sigma_*\left(E_{\frac{\ell}{r}}\otimes_{\mc{O}_X}\mc{O}_X(-D)\right) 
			\ar[d]^-{\sigma_*\left(\mf j_{E_{\ell/r}}\right)} \\ 
			E_{\frac{\ell}{r}+1} \ar[rrr]^{{\sigma}_{(E_{\frac{\ell}{r}+1})}} 
			&&& \sigma_*\left(E_{\frac{\ell}{r}+1}\right)\,, 
		}
	\end{gathered}
\end{equation}
where $\sigma_{\mc O_X(-D)}$ is the canonical real structure on ${\mc O}_X(-D)$.  
A morphism of real parabolic bundles $(E_\bullet, \sigma_{E_\bullet}) \to (F_\bullet, \sigma_{F_\bullet})$ is given by a morphism $\varphi : E_\bullet \to F_\bullet$ 
in $\ParVect_{\frac{1}{r}}(X, D)$ such that 
\begin{equation}
	\sigma_*(\varphi)\circ\sigma_{E_\bullet} = \sigma_{F_\bullet}\circ\varphi. 
\end{equation}
We denote by $\RPVect_{\frac{1}{r}}(X,D)$ the category of real parabolic bundles 
on $(X, \sigma, D)$. 


\subsection{Correspondence with real structure}
Consider the root stack $\mf{X} := \mf{X}_{(\mc{O}_X(D),\,s_D,\,r)}$ 
together with the tautological invertible sheaf $\ms{M}$ on it 
(see \S\ref{sec:real-structures-preliminaries}).  
Let $\pi : \mf{X} \longrightarrow X$ be the natural morphism. 
In \cite{Biswas97}, it is shown that the category of parabolic bundles 
on $X$ with parabolic weights in $\frac{1}{r}\bb Z$ along $D$ is equivalent 
to the category of orbifold bundles on a suitable ramified Galois cover 
of $X$; this is known as Biswas correspondence. 
The following result is the stacky avatar of this correspondence.  

\begin{theorem}\cite[Th\'eor\'eme 2.4.7]{Born07}\label{thm:Biswas-Borne-correspondence} 
	There is an equivalence of categories between $\Vect(\mf{X})$ and $\ParVect_{\frac{1}{r}}(X,D)$ 
	given by the following correspondence:
	\begin{enumerate}[$\bullet$]
		\item A vector bundle $\mathcal{E}$ on $\mf{X}$ gives rise to the 
		parabolic vector bundle $E_{\bullet} \in \ParVect_{\frac{1}{r}}(X, D)$ 
		given by 
		$$\frac{\ell}{r} \longmapsto E_{\frac{\ell}{r}} 
		:= \pi_*\left(\mathcal{E}\otimes_{\mc O_{\mf X}}\ms M^{-\ell}\right),\,\,\forall\,\ell\,\in \bb Z.$$ 
		
		\item Conversely, a parabolic vector bundle $E_{\bullet} \in \ParVect_{\frac{1}{r}}(X, D)$ 
		gives rise to the vector bundle $\mathcal{E}$ on $\mf{X}$ given by 
		$$\mathcal{E} := \int^{\frac{1}{r}\bb{Z}} 
		\pi^*(E_{\bullet})\otimes_{{\mc O}_{\mf X}} \ms{M}^{\bullet\,r},$$
		where $\int^{\frac{1}{r}\mathbb{Z}}$ stands for the coend.
	\end{enumerate}
\end{theorem}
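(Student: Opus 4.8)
The plan is to produce the two functors of the statement and verify they are mutually quasi-inverse. Throughout write $L := \mc O_X(D)$ and use the canonical isomorphism $\psi : \ms M^{\otimes r}\xrightarrow{\simeq}\pi^*L$ of \eqref{eqn:canonical-isom-of-M^r-with-pullback-of-O_X(D)} together with the tautological section $\xi\in\Gamma(\mf X,\ms M)$, which satisfies $\psi(\xi^{\otimes r})=\pi^*s_D$. First I would check that $\Phi : \mc E\mapsto E_\bullet$, with $E_{\ell/r}:=\pi_*(\mc E\otimes\ms M^{-\ell})$, lands in $\ParVect_{\frac1r}(X,D)$. The arrows of the functor $E_\bullet$ come from the tautological section: multiplication by $\xi$ gives $\ms M^{-\ell-1}\to\ms M^{-\ell}$, hence $E_{(\ell+1)/r}\to E_{\ell/r}$, producing a functor $\left(\frac1r\bb Z\right)^{\op}\to\Vect(X)$. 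The periodicity isomorphism $\mf j_{E_\bullet}$ is read off from $\psi$ and the projection formula: since $\ms M^{-\ell-r}\cong\ms M^{-\ell}\otimes\pi^*L^{-1}$, one gets
\begin{equation*}
E_{\ell/r+1}=\pi_*\!\left(\mc E\otimes\ms M^{-\ell}\otimes\pi^*\mc O_X(-D)\right)\cong\pi_*(\mc E\otimes\ms M^{-\ell})\otimes\mc O_X(-D)=E_{\ell/r}\otimes\mc O_X(-D),
\end{equation*}
and the triangle \eqref{diag:parabolic-bundle-definition} records the compatibility of $\xi^{\otimes r}$ with $s_D$. Local freeness of each $E_{\ell/r}$ is checked \'etale-locally on $X$: where $D$ is cut out by a single equation $x$, the morphism $\pi$ is modelled by $[\Spec(A[y]/(y^r-x))/\mu_r]\to\Spec A$, and pushing forward a locally free $\mu_r$-equivariant module along it returns an isotypic summand of a free module over $A$, hence is locally free.

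The inverse is analysed over the same local model, where a vector bundle on $\mf X$ is precisely a $\mu_r$-equivariant (equivalently, $\bb Z/r$-graded) locally free module; decomposing into isotypic components and recording how $\xi$ (multiplication by $y$) shifts the grading reproduces exactly the data of a parabolic filtration with jumps in $\frac1r\bb Z$. This local dictionary is the conceptual heart of the equivalence, and the coend
\begin{equation*}
\Psi(E_\bullet):=\int^{\frac1r\bb Z}\pi^*(E_\bullet)\otimes_{\mc O_{\mf X}}\ms M^{\bullet\,r}
\end{equation*}
is the atlas-independent way of gluing the pieces $\pi^*E_{\ell/r}\otimes\ms M^{\ell}$ along the maps coming from the functor structure of $E_\bullet$ and from $\xi$. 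I would verify that this coend is computed by the above local gluing, so that it is locally free of the expected rank and defines a functor $\Psi:\ParVect_{\frac1r}(X,D)\to\Vect(\mf X)$.

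It then remains to show the two functors are quasi-inverse. For $\Psi\circ\Phi$ there is a natural map, assembled from the counit of the adjunction $\pi^*\dashv\pi_*$,
\begin{equation*}
\int^{\frac1r\bb Z}\pi^*\pi_*\!\left(\mc E\otimes\ms M^{-\bullet r}\right)\otimes\ms M^{\bullet r}\longrightarrow\mc E,
\end{equation*}
and checking it is an isomorphism is local, where it is the statement that a graded module is the direct sum of its graded pieces. For $\Phi\circ\Psi$ the projection formula gives
\begin{equation*}
\pi_*\!\left(\Psi(E_\bullet)\otimes\ms M^{-\ell}\right)\cong\int^{\ell'}E_{\ell'/r}\otimes\pi_*\ms M^{\,\ell'-\ell},
\end{equation*}
and the explicit value $\pi_*\ms M^{\,k}\cong\mc O_X\!\left(\lfloor k/r\rfloor D\right)$ --- whose transition maps under $\xi$ are isomorphisms except where $\lfloor k/r\rfloor$ jumps, where they are the canonical inclusion induced by $s_D$ --- forces the coend to collapse onto the term $\ell'=\ell$, returning $E_{\ell/r}$. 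I expect the main obstacle to be precisely this collapsing: one must compute $\pi_*\ms M^{k}$ for all $k$, track the transition maps (multiplication by $\xi$, respectively by $s_D$) through the projection formula, and confirm that the resulting identifications are natural in $\mc E$ (respectively $E_\bullet$) and compatible with the coequalizer defining the coend. Once this behaviour of the powers of $\ms M$ under $\pi_*$ and the local equivalence of $\bb Z/r$-graded bundles with parabolic modules are in place, naturality is routine and both unit and counit are isomorphisms, giving the desired equivalence.
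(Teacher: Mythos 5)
The paper itself contains no proof of this statement: it is quoted verbatim from Borne \cite[Th\'eor\'eme 2.4.7]{Born07}, so the only meaningful comparison is with the cited source, whose argument your sketch faithfully reconstructs. All of your ingredients --- the functor $\mc E\mapsto \pi_*(\mc E\otimes\ms M^{-\ell})$ with transition maps induced by $\xi$ and periodicity from $\psi:\ms M^{\otimes r}\simeq\pi^*\mc O_X(D)$ plus the projection formula, the \'etale-local model $[\Spec\bigl(A[y]/(y^r-x)\bigr)/\mu_r]$ identifying bundles on $\mf X$ with $\bb Z/r$-graded locally free modules, the computation $\pi_*\ms M^{k}\cong\mc O_X\bigl(\lfloor k/r\rfloor D\bigr)$, and the resulting collapse of the coend --- are precisely the steps of Borne's proof, so your proposal is correct and takes essentially the same route as the (cited) original.
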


Recall that $\mf X$ admits a real structure $\sigma_{\mf X}$ compatible with the 
real structure $\sigma$ on $X$ (see Proposition \ref{real structure on root stack proposition}), 
which induces a real structure $\sigma_{\ms{M}}$ on the tautological invertible sheaf 
$\ms M$ on $\mf{X}$ by Proposition \ref{prop:real-structure-on-tautological-line-bundle}. 
The following theorem is a real avatar of Theorem \ref{thm:Biswas-Borne-correspondence}.

\begin{theorem}\label{thm:real-avatar-of-Biswas-Borne-correspondence}
	The categories ${\bf RealVect}(\mf{X})$ and ${\bf RealParVect}_{\frac{1}{r}}(X,D)$ 
	are equivalent. 
\end{theorem}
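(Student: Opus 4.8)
The plan is to upgrade the equivalence of Theorem~\ref{thm:Biswas-Borne-correspondence} to the real setting by checking that both functors in that equivalence transport real structures, and do so in a way compatible with the unit/counit of the adjunction (coend) that already witnesses the equivalence at the complex level. Concretely, given a real vector bundle $(\mc E, \sigma_{\mc E})$ on $(\mf X, \sigma_{\mf X})$, I would define a real structure on the associated parabolic bundle $E_\bullet$ level-by-level: for each $\ell \in \bb Z$, the isomorphism $\sigma_{E_{\ell/r}}$ is built by applying $\pi_*$ to the tensor product $\sigma_{\mc E}\otimes\sigma_{\ms M}^{-\ell}$ (using the real structure $\sigma_{\ms M}$ from Proposition~\ref{prop:real-structure-on-tautological-line-bundle}), followed by the base-change isomorphism relating $\pi_*\circ(\sigma_{\mf X})_*$ with $\sigma_*\circ\pi_*$ coming from the $2$-commutative square~\eqref{eqn:real-str-on-root-stack}. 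The involutivity $\sigma_*(\sigma_{E_{\ell/r}})\circ\sigma_{E_{\ell/r}} = \Id$ then follows from the corresponding identity for $\sigma_{\mc E}$ and $\sigma_{\ms M}$ together with functoriality of $\pi_*$.

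\emph{First} I would verify that these level-wise real structures are compatible with the parabolic gluing data $\mf j_{E_\bullet}$, i.e.\ that they assemble into a natural isomorphism of functors $E_\bullet \to \sigma_* E_\bullet$ making the required square commute. This reduces to the statement that the tautological identification $\ms M^{\otimes r}\cong\pi^*L$ intertwines the real structures $\sigma_{\ms M^{\otimes r}}$ and $\pi^*(\sigma_L)$, which is exactly Lemma~\ref{real structure preserving isomorphism}; tensoring with $\ms M^{-\ell}$ and pushing down, this says precisely that $\sigma_{E_{(\ell+r)/r}}$ agrees with $\sigma_{E_{\ell/r}}$ after the shift by $\mc O_X(-D)$, which is the compatibility needed in Definition~\ref{real def:parabolic-vector-bundle}.

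\emph{Conversely}, starting from a real parabolic bundle $(E_\bullet, \sigma_{E_\bullet})$, I would produce a real structure on $\mc E = \int^{\frac{1}{r}\bb Z}\pi^*(E_\bullet)\otimes\ms M^{\bullet r}$. Here the key point is that the coend is a colimit and hence functorial: the data $\sigma_{E_\bullet}$ together with $\sigma_{\ms M}$ and the base-change isomorphism $\pi^*\circ\sigma_* \cong (\sigma_{\mf X})_*\circ\pi^*$ induce an isomorphism $\mc E \to (\sigma_{\mf X})_*\mc E$, again because $(\sigma_{\mf X})_*$ commutes with the coend (being an equivalence of categories, $\sigma_{\mf X}$ being an involution). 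Involutivity and $\mc O_{\mf X}$-linearity are inherited from the corresponding properties of the input data. I would then check that a morphism preserves real structures on one side if and only if its image does on the other; since the underlying functors are already fully faithful and essentially surjective by Theorem~\ref{thm:Biswas-Borne-correspondence}, the real-structure-preserving morphisms correspond bijectively, giving the desired equivalence.

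\textbf{The main obstacle} I anticipate is the bookkeeping of the base-change isomorphism between $\pi_*(\sigma_{\mf X})_*$ and $\sigma_*\pi_*$ (and its pullback analogue), and verifying that it is \emph{canonical} and \emph{compatible} with the coend/adjunction structure, so that involutivity really descends. This is where the $2$-categorical nature of the root stack enters: one must ensure the chosen $2$-isomorphism witnessing $\pi\circ\sigma_{\mf X} = \sigma\circ\pi$ is used coherently. Lemma~\ref{lem:pullback-of-atlas-under-real-str-of-root-stack} provides the concrete atlas-level description $\mf X\times_{\sigma_{\mf X},\mf X,u}U \cong \sigma^{-1}U$ that makes this base-change isomorphism explicit, so I expect the argument to go through by reducing everything to the étale atlas and invoking flat (in fact étale) base change for the representable morphism $\pi$, but the coherence checks are the delicate part rather than any single computation.
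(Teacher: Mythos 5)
Your proposal takes essentially the same route as the paper's proof: both directions transport real structures through the Biswas--Borne functors, using the compatibility $\pi_*\circ(\sigma_{\mf X})_*\cong\sigma_*\circ\pi_*$ coming from $\pi\circ\sigma_{\mf X}=\sigma\circ\pi$, the real structure $\sigma_{\ms M}$ on the tautological sheaf, and the fact that $(\sigma_{\mf X})_*$ commutes with the coend so that an isomorphism of the two direct systems induces the real structure on $\mc E$ (the paper's Remark~\ref{rem:isomorphism-of-colimits}). The only minor differences are that the paper verifies naturality of the level-wise real structures via the inclusions $j_{\ell',\ell}:\ms M^{-\ell'}\hookrightarrow\ms M^{-\ell}$ rather than via Lemma~\ref{real structure preserving isomorphism}, and it leaves the involutivity and morphism-level checks implicit, which you spell out.
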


Before going into the proof of Theorem \ref{thm:real-avatar-of-Biswas-Borne-correspondence}, 
we make the following remark whose proof is a straightforward application of the 
universal property of colimit, so we omit it. 

\begin{remark}\label{rem:isomorphism-of-colimits}
	Let $J$ be an indexing category, and let $\ms C$ be another category. 
	Let $F, G: J \rightarrow \ms C$ be two functors such that both the colimits 
	$\underset{j \in J}{\lim} F_j$ and $\underset{j \in J}{\lim} G_j$ exist. 
	Then any natural isomorphism of functors 
	$\varphi := \{\varphi_j\}_{j\in J} : F \longrightarrow G$ uniquely extends 
	to a natural isomorphism of functors 
	$\widetilde{\varphi} : \underset{j \in J}{\lim} F_j 
	\longrightarrow \underset{j \in J}{\lim} G_j$ 
	such that for each $j \in J$ the following diagram commutes:
	\begin{align*}
		\xymatrix{
			F_j \ar[rr] \ar[d]_{\varphi_j} && 
			\underset{j \in J}{\lim} F_j \ar[d]^{\widetilde\varphi} \\
			G_j \ar[rr] && \underset{j \in J}{\lim} G_j\,. 
		}
	\end{align*}
\end{remark}

\begin{proof}[Proof of Theorem \ref{thm:real-avatar-of-Biswas-Borne-correspondence}] 
	Let $(\mc{E}, \sigma_{\mc E})$ be a real vector bundle on $\mf{X}$. 
	We show that the corresponding parabolic vector bundle $E_\bullet$ has a real structure. 
	Applying $\pi_*$ to $\sigma_{\mc E}$ 
	we get a real structure on $\pi_*\mc E$, namely 
	\begin{equation}\label{real structure equation}
		\pi_*(\sigma_{\mc E}) : \pi_*\mc{E} \longrightarrow 
		\pi_*\left((\sigma_{\mf X})_*\mc{E}\right) \cong 
		\sigma_*\left(\pi_*\mc{E}\right), 
	\end{equation}
	where the last isomorphism follows from Proposition 
	\ref{real structure on root stack proposition}. 
	Since each $\mc{E}\otimes_{\mc O_{\mf X}}\ms{M}^{-\ell}$ has a real structure, 
	namely $\sigma_{\mc E}\otimes\sigma_{{\ms M}^{-\ell}}$, for each $\ell \in \bb Z$, 
	we get a real structure on 
	$E_{\frac{\ell}{r}} := \pi_*(\mc E\otimes_{\mc{O}_{\mf X}}{\ms M}^{-\ell})$, 
	which we denote by $\sigma_{E_{\ell/r}}$. 
	To show that the collection $\{\sigma_{E_{\ell/r}}\}_{\ell \in \bb Z}$ defines a 
	real structure on $E_\bullet$, consider an \'etale atlas $u : U \longrightarrow \mf X$ 
	given by the quadruple $(f : U \to X, M, \phi, t) \in \mf X(U)$. 
	By definition, $\ms{M}$ is given by the line bundle $M$ on $U$. 
	For $\ell'\geq \ell$ there is a natural inclusion 
	$$j_{\ell',\, \ell} : \ms{M}^{-\ell'} \hookrightarrow \ms{M}^{-\ell}$$ 
	(see \cite[\S 3.2]{Born07}). 
	Note that $j_{\ell', \ell}$ respect the real structures 
	$\sigma_{{\ms M}^{-\ell'}}$ and $\sigma_{{\ms M}^{-\ell}}$, 
	and makes the following diagram commutative. 
	\begin{equation}
		\begin{gathered}
			\xymatrix{
			\mc{E}\otimes_{\mc{O}_{\mf X}}\ms{M}^{-\ell'} 
			\ar[rrrr]^{\Id_{\mc E}\otimes j_{\ell', \ell}} 
			\ar[dd]_{\sigma_{\mc E}\otimes\,\sigma_{{\ms M}^{-\ell'}}} 
			&&&& \mc{E}\otimes_{\mc{O}_{\mf X}}\ms{M}^{-\ell} 
			\ar[dd]^{\sigma_{\mc E}\otimes\,\sigma_{{\ms M}^{-\ell}}} \\
			&&&\\
			\left({\sigma_{\mf X}}\right)_*\left(\mc E\otimes_{\mc O_{\mf X}}
			{\ms M}^{-\ell'}\right) \ar[rrrr]^{\Id_{(\sigma_{\mf X})_*(\mc E)}
				\otimes (\sigma_{\mf X})_*(j_{\ell', \ell})} &&&& 
				\left({\sigma_{\mf X}}\right)_*\left(\mc{E}
				\otimes_{\mc{O}_{\mf X}}\ms{M}^{-\ell}\right)
		}
		\end{gathered}
	\end{equation}
	Applying $\pi_*$ to the above diagram and using 
	$\pi\circ\sigma_{\mf X} = \sigma\circ\pi$ 
	(see Proposition \ref{real structure on root stack proposition}) and the fact that 
	$E_{\frac{\ell}{r}} = \pi_*\left(\mathcal{E}\otimes_{\mc{O}_{\mf X}}
	\ms{M}^{-\ell}\right)$ we get the following commutative diagram
	\begin{equation}
		\begin{gathered}
		\xymatrix{
			E_{\frac{\ell'}{r}} \ar[r] \ar[d]_{\sigma_{({E_{\ell'/r}})}} 
			& E_{\frac{\ell}{r}} \ar[d]^{\sigma_{({E_{\ell/r}})}} \\ 
			\sigma_*\left(E_{\frac{\ell'}{r}}\right) \ar[r] & 
			\sigma_*\left(E_{\frac{\ell}{r}}\right)\,, 
		}
		\end{gathered}
	\end{equation}
	where the horizontal arrows are the are those involved in the description 
	of the functor $E_\bullet$. This shows that $E_\bullet$ is a real parabolic 
	bundle on $X$. 
	
	Conversely, suppose that $E_\bullet$ is a real parabolic vector bundle on $X$. 
	The corresponding vector bundle $\mc{E}$ on $\mf{X}$ is the coend 
	\begin{equation}
		\mathcal{E} := \int^{\frac{1}{r}\bb{Z}} \pi^*(E_\bullet)
		\otimes_{\mc{O}_{\mf X}} \ms{M}^{\bullet\,r}\,. 
	\end{equation}
	Here we remark that a coend is a special type of colimit 
	\cite[Chap. IX, \S\,5 Proposition 1]{MacL71}. 
	To show that $\mc{E}$ has an induced real structure, we first note that 
	since $\sigma_{\mf X}$ is an isomorphism, it is clear that 
	$\left(\sigma_{\mf X}\right)_*\mc{E}$ must be the coend of the direct system 
	$$\left(\sigma_{\mf X}\right)_*(\pi^*E_\bullet\otimes\ms{M}^{\bullet\,r}) 
	: \left(\frac{1}{r}\bb Z\right)^{\rm op} \times \frac{1}{r}{\bb Z} 
	\longrightarrow \Vect(\mf X)\,.$$ 
	Next, we show that the above functor is a naturally isomorphic to the following one:
	\begin{equation}
		\pi^*E_\bullet\otimes \ms{M}^{\bullet\,r} : 
		\left(\frac{1}{r}\bb Z\right)^{\rm op} \times \frac{1}{r}{\bb Z} 
		\longrightarrow \Vect(\mf X)\,. 
	\end{equation}
	Since $\pi\circ\sigma_{\mf{X}} = \sigma\circ\pi$ 
	by Proposition \ref{prop:real-str-on-root-stack}, 
	taking pullbacks 
	$\Vect(X) \overunderset{(\pi\circ\sigma_{\mf{X}})^*}{(\sigma\circ\pi)^*}{\rightrightarrows} 
	\Vect(\mf X)$ we have 
	\begin{align}
		& \sigma_{\mf{X}}^*\circ \pi^* = \pi^*\circ\sigma^* \nonumber \\
		\implies & \pi^* \simeq (\sigma_{\mf{X}})_*\circ\pi^*\circ\sigma^*,
		\,\,\,\,\,\text{since }(\sigma_{\mf{X}})_*\circ\sigma_{\mf{X}}^* 
		\simeq \Id_{\Vect(\mf{X})}. \nonumber \\
		\implies & \pi^*\circ\sigma_* \simeq (\sigma_{\mf{X}})_*\circ\pi^*, 
		\,\,\,\,\,\text{since } \,\sigma^*\circ\sigma_* \simeq \Id_{\Vect(X)}. \label{functor iso1} 
	\end{align}
	This implies that the following natural isomorphisms of functors  $\left(\frac{1}{r}\bb Z\right)^{op} \times\frac{1}{r}{\bb Z}\rightarrow \textnormal{\textbf{Vect}}(\mf X)$:
	\begin{align*}\label{isomorphism of functors of direct systems 1}
		\left(
		\sigma_{\mf{X}}\right)_*(\pi^*E_{\bullet}\otimes \ms{M}^{\bullet\,r}) 
		&\simeq \left(
		\sigma_{\mf{X}}\right)_*(\pi^*E_{\bullet})\otimes 
		\left(\sigma_{\mf{X}}\right)_*(\ms{M}^{\bullet\,r}) \\
		& \simeq \pi^*(\sigma_*E_{\bullet})\otimes 
		\left(\sigma_{\mf{X}}\right)_*(\ms{M}^{\bullet\,r})
		\,\,\,\,[\text{using}\,\,(\ref{functor iso1})] \\
		&\simeq \pi^*E_{\bullet} \otimes \ms{M}^{\bullet\,r}\,, 
	\end{align*}
	where the last isomorphism is due to the fact that both $E_\bullet$ and 
	$\ms{M}^{\bullet\,r}$ have real structures. Thus by our earlier 
	Remark \ref{rem:isomorphism-of-colimits}, we conclude that their coends, 
	namely $\mc{E}$ and $(\sigma_{\mf{X}})_*\mc{E}$ are isomorphic. 
	This gives the required real structure on $\mc{E}$. 
\end{proof}

\section{Real Connections On Root Stacks}\label{sec:root-stacks-and-real-connections}
Let $X$ be a smooth $\bb C$-variety and $D$ a reduced effective divisor on $X$ having 
{\it strict normal crossings} in the sense that all of its irreducible components 
are smooth and they intersect each others transversally. We use the abbreviation 
``sncd'' to mean a {\it strict normal crossing divisor}. 
Throughout this section we assume that $D$ is real in the sense that $\sigma(D) = D$. 

\subsection{Connections on root stacks}
We first briefly recall the notion of logarithmic connection, holomorphic 
connection and parabolic connection on bundles over schemes and root stacks. 
Let $\Omega^1_{X/\bb C}(\log D)$ be the sheaf of meromorphic differentials on $X$ 
having at most logarithmic poles along $D$. It fits into the following short exact 
sequence of $\mc O_X$-modules 
\begin{align}
	0 \rightarrow \Omega^1_{X/\bb C} \rightarrow \Omega^1_{X/\bb C}(\log D) 
	\stackrel{res}{\longrightarrow} \bigoplus\limits_{i \in I} \mc{O}_{D_i} \rightarrow 0, 
\end{align}
where $res$ denotes the residue map, $D_i$'s are components of $D$, 
and $\mc{O}_{D_i}$ denotes the structure sheaf of the closed subscheme $D_i$ of 
$X$ \cite[Properties 2.3]{EsnVieh92}. 

\begin{definition}\label{logarithmic connection definition}
	Let $E$ be a vector bundle on $X$. 
	\begin{enumerate}[(i)]
		\item A \textit{logarithmic connection} on $E$ is a $\bb C$-linear sheaf homomorphism
		\begin{align*}
			\nabla : E \to E \otimes_{\mc O_X} \Omega^1_{X/\bb C}(\log D)
		\end{align*}
		satisfying the Leibniz rule: 
		$$\nabla(f\cdot s) = f\nabla s + s\otimes df,$$
		for all locally defined sections $f$ and $s$ of $\mc O_X$ and $E$, respectively. 
		It is usually denoted as a pair $(E, \nabla)$. 
		If the image of $\nabla$ lands inside $E\otimes_{\mc O_X}\Omega^1_{X/\bb C}$, then 
		$\nabla$ is called an {\it algebraic connection} on $E$. 
		\item A \textit{morphism of logarithmic connections} from $(E,\nabla)$ to $(E',\nabla')$ 
		is a vector bundle homomorphism $$\phi: E\rightarrow E'$$
		such that the following diagram commutes: 
		\begin{equation}\label{diag:morphism-of-log-connections}
			\begin{gathered}
				\xymatrix{
					E \ar[rr]^(.3){\nabla} \ar[d]_{\phi} && 
					E\otimes \Omega^1_{X/\bb C}(\log D) \ar[d]^{\phi\otimes \Id} \\
					E' \ar[rr]^(.3){\nabla'} && E'\otimes \Omega^1_{X/\bb C}(\log D)
				}
			\end{gathered}
		\end{equation}
	\end{enumerate}
	We denote by $\Con(X, D)$ the category of logarithmic connections on $(X, D)$. 
\end{definition}
The following result gives a canonical logarithmic connection on $\mc{O}_X(D)$.

\begin{lemma}[\text{\cite[Lemma 3.7]{BornLaar23}, \cite[Lemma 2.7]{EsnVieh92}}]\label{real structure on canonical connection lemma}
	Let $B = \sum_{i\in I}\mu_i D_i$ be a Cartier divisor with support in $D$. There exists a canonical logarithmic connection:
	\begin{align}
		d(B) : \mc{O}_X(B) \rightarrow \mc{O}_X(B) \otimes_{\mc{O}_X} \Omega^1_{X/\bb C}(\log D)
	\end{align}
	characterized by $d(B)\left(\prod_{i\in I}x_i^{-\mu_i}\right) = -\sum_{i\in I}x_i^{-\mu_i}\cdot \sum_{i\in I}\mu_i\dfrac{dx_i}{x_i}$, where $x_i$ is a local equation of $D_i$.
\end{lemma}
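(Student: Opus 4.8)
The plan is to exploit that $\mc O_X(B)$ is an invertible sheaf, so that a logarithmic connection on it is completely determined, on any open set carrying a nowhere-vanishing section $e$, by the single logarithmic $1$-form $\omega$ with $\nabla e = e\otimes\omega$; the Leibniz rule then forces $\nabla(fe)=e\otimes(f\omega+df)$ for every local function $f$. Thus I would first construct $d(B)$ locally and then check that the local pieces glue to a global connection.

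First I would choose an open cover $\{U_\alpha\}$ of $X$ on which every component $D_i$ admits a local defining equation $x_i^{(\alpha)}\in\mc O_X(U_\alpha)$. On $U_\alpha$ the sheaf $\mc O_X(B)$ has the canonical generator $e_\alpha:=\prod_{i\in I}(x_i^{(\alpha)})^{-\mu_i}$, and I would set $\nabla_\alpha e_\alpha:=e_\alpha\otimes\omega_\alpha$, where
\[
\omega_\alpha:=-\sum_{i\in I}\mu_i\,\frac{dx_i^{(\alpha)}}{x_i^{(\alpha)}},
\]
extending $\nabla_\alpha$ to all sections by Leibniz. Each $\omega_\alpha$ is a section of $\Omega^1_{X/\bb C}(\log D)$, since its only poles are the simple logarithmic ones $dx_i/x_i$, so $\nabla_\alpha$ lands in $\mc O_X(B)\otimes_{\mc O_X}\Omega^1_{X/\bb C}(\log D)$ and satisfies the Leibniz rule by construction; the two requirements of Definition \ref{logarithmic connection definition} are therefore immediate.

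The substantive step is gluing. On an overlap $U_\alpha\cap U_\beta$ the two local equations of $D_i$ differ by a unit, say $x_i^{(\alpha)}=g_{i,\alpha\beta}\,x_i^{(\beta)}$ with $g_{i,\alpha\beta}\in\mc O_X^\times(U_\alpha\cap U_\beta)$, so the generators satisfy $e_\alpha=h_{\alpha\beta}\,e_\beta$ with $h_{\alpha\beta}:=\prod_{i\in I}g_{i,\alpha\beta}^{-\mu_i}$, which is exactly the transition cocycle of $\mc O_X(B)$. For $\nabla_\alpha$ and $\nabla_\beta$ to agree I must verify $\omega_\alpha-\omega_\beta=d\log h_{\alpha\beta}$: comparing $\nabla_\alpha e_\alpha=h_{\alpha\beta}e_\beta\otimes\omega_\alpha$ with $\nabla_\beta(h_{\alpha\beta}e_\beta)=e_\beta\otimes(h_{\alpha\beta}\omega_\beta+dh_{\alpha\beta})$ shows this is precisely the compatibility condition. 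Both sides unwind to $-\sum_{i\in I}\mu_i\,d\log g_{i,\alpha\beta}$: the left-hand side because $\frac{dx_i^{(\alpha)}}{x_i^{(\alpha)}}-\frac{dx_i^{(\beta)}}{x_i^{(\beta)}}=d\log(x_i^{(\alpha)}/x_i^{(\beta)})=d\log g_{i,\alpha\beta}$, and the right-hand side because $d\log h_{\alpha\beta}=\sum_{i\in I}(-\mu_i)\,d\log g_{i,\alpha\beta}$. Hence the $\nabla_\alpha$ patch to a global logarithmic connection $d(B)$ on $\mc O_X(B)$, and the stated local formula holds tautologically; the identical computation, read instead for a change $x_i\mapsto u_i x_i$ of local equations within a single chart, shows the formula does not depend on the chosen equations, so $d(B)$ is canonical.

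I expect this gluing identity $\omega_\alpha-\omega_\beta=d\log h_{\alpha\beta}$ to be the only genuine content of the proof; everything else is formal bookkeeping. If one also wishes to record the characterization that pins $d(B)$ down uniquely, I would apply the residue map $res$ from the logarithmic sequence to $\omega_\alpha$, obtaining residue $-\mu_i$ along $D_i$; thus $d(B)$ is the unique logarithmic connection on $\mc O_X(B)$ whose residue along each $D_i$ equals $-\mu_i\cdot\Id$.
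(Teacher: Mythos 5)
Your construction is correct, and the gluing identity $\omega_\alpha-\omega_\beta=d\log h_{\alpha\beta}$ is indeed the only substantive step. Note, however, that the paper itself offers no proof of this lemma: it is quoted from \cite[Lemma 3.7]{BornLaar23} and \cite[Lemma 2.7]{EsnVieh92}, so your argument is a self-contained replacement for a citation rather than an alternative to an argument in the text. For comparison, the route taken in the cited sources (and implicit in the paper's \S 3.1, where $\mc O_X(D)$ is regarded as a subsheaf of the constant sheaf $\mc K_X$) is slightly slicker: a local section of $\mc O_X(B)$ \emph{is} a rational function $g$ with $\mathrm{div}(g)+B\geq 0$, and one simply sets $d(B)(g):=dg$. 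Writing $g=f\prod_i x_i^{-\mu_i}$ locally gives $dg=\prod_i x_i^{-\mu_i}\otimes\bigl(df-f\sum_i\mu_i\,dx_i/x_i\bigr)$, which shows at once that $d$ carries $\mc O_X(B)$ into $\mc O_X(B)\otimes_{\mc O_X}\Omega^1_{X/\bb C}(\log D)$, satisfies Leibniz, and yields the stated formula. Since this connection is the restriction of the globally defined operator $d$ on $\mc K_X$, no cocycle check is needed; your patching argument builds the same global object by hand. (Incidentally, both computations make clear that the displayed formula in the statement should read $-\prod_{i\in I}x_i^{-\mu_i}\cdot\sum_{i\in I}\mu_i\,dx_i/x_i$, i.e.\ the first sum is a product, as you implicitly assumed.)

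One caveat on your closing remark: the claim that $d(B)$ is \emph{the unique} logarithmic connection on $\mc O_X(B)$ with residue $-\mu_i$ along each $D_i$ is false in general. Two logarithmic connections on the same line bundle differ by a global section of $\Omega^1_{X/\bb C}(\log D)$, and equality of residues only forces that difference into $H^0(X,\Omega^1_{X/\bb C})$, which need not vanish (e.g.\ for $X$ an abelian variety or a curve of positive genus, both allowed by the paper's standing hypotheses). This does not affect the body of your proof: the characterization asserted in the lemma is by the displayed formula on the local generators, which does determine the connection uniquely via the Leibniz rule, and your construction realizes it. Simply drop the residue-based uniqueness statement, or restrict it to the case $H^0(X,\Omega^1_{X/\bb C})=0$.
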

\begin{remark}\label{rem:B-twist-of-a-log-conn}
	Let $B$ be a Cartier divisor on $X$ supported on $D$. 
	Then the tensor product of two logarithmic connections $(E, \nabla_E)$ 
	and $(F, \nabla_F)$ with logarithmic poles along $B$ is again a logarithmic 
	connection $(E\otimes F, \nabla_E\otimes\Id_F+\Id_E\otimes\nabla_F)$ with 
	logarithmic poles along $B$ (c.f. \cite{EsnVieh92} and \cite[\S\,3.3.2]{BornLaar23}). 
	Then we may define the {\it $B$-twist} of a logarithmic connection $(E, \nabla)$ 
	on $(X, D)$ to be the tensor product $(E, \nabla)\otimes(\mc O_X(B), d(B))$, 
	which is a logarithmic connection on $E(B) := E\otimes\mc O_X(B)$ having logarithmic 
	poles along $B$, where $d(B)$ is the canonical logarithmic connection on $\mc O_X(B)$ 
	as described in Lemma \ref{real structure on canonical connection lemma}. 
\end{remark}

\begin{definition}\label{parabolic connection definition}
	\begin{enumerate}[(i)]
		\item A \textit{parabolic connection} on $(X,D)$ having rational weights in $\frac{1}{r}\bb Z$ 
		along $D$ is a contravariant functor
		$$(E_\bullet\,,\,\nabla_\bullet) : \left(\frac{1}{r}\bb Z\right)^{op} 
		\longrightarrow \Con(X,D)$$ 
		such that for each positive integer $n$ there is a natural isomorphism of functors 
		$$\mf j : \left(E_\bullet\otimes {\mc O}_X(-D), 
		\nabla_\bullet(-D)\right) \longrightarrow 
		\left(E_{\bullet+1}, \nabla_{\bullet+1}\right)$$
		such that the following diagram commutes: 
		\begin{equation*}
			\xymatrix{
				\left(E_\bullet\otimes {\mc O}_X(-D), \nabla_\bullet(-D)\right) 
				\ar[rr]^-{\mf j} \ar[dr]_-{\Id_{E_\bullet}\otimes\iota} && \left(E_{\bullet+1}, \nabla_{\bullet+1}\right) 
				\ar[dl]^-{} \\ 
				& (E_\bullet, \nabla_\bullet) & 
			}
		\end{equation*}
		where $\iota$ is the canonical inclusion $\mc O_X(-D) \hookrightarrow \mc{O}_X$ 
		and $\nabla_{\bullet}(-D)$ is 
		the twist of $\nabla_{\bullet}$ by the connection $d(-D)$ of Lemma \ref{real structure on canonical connection lemma}.
		
		\item \cite[\S\,4.1]{BisMajWan12} 
		A \textit{strongly parabolic connection} on $(X, D)$ with weights in $\frac{1}{r}\bb Z$ 
		is a parabolic connection $(E_{\bullet}, \nabla_\bullet)$ on $(X, D)$ such that if 
		$\{D_i\}_{i\in I}$ are irreducible components of $D$, then for each $i\in I$ and $\ell \in \bb Z$, 
		the induced homomorphism 
		\begin{equation*}
			\overline{res_{D_i}(\nabla_{\frac{\ell}{r}})} : 
			\dfrac{E_{\frac{\ell}{r}}\big\vert_{D_i}}{E_{\frac{\ell+1}{r}}\big\vert_{D_i}} 
			\longrightarrow \dfrac{E_{\frac{\ell}{r}}\big\vert_{D_i}}{E_{\frac{\ell+1}{r}}\big\vert_{D_i}}
		\end{equation*} 
		is equal to $\frac{\ell}{r}\cdot\Id$, where 
		$res_{D_i}(\nabla_{\frac{\ell}{r}}) : E_{\frac{\ell}{r}}\big\vert_{D_i} 
		\longrightarrow E_{\frac{\ell}{r}}\big\vert_{D_i}$ is the residue map 
		(c.f. \cite[Definition 2.5]{EsnVieh92}).
	\end{enumerate}
\end{definition}

\begin{remark}\label{push-forward of a logarithmic connection remark}
	Let $(E, \nabla)$ be a logarithmic connection on $X$ with poles along $D$. 
	Since $\sigma$ is an isomorphism and $\sigma^*(D) = D$, 
	it follows that the direct image along $\sigma$ of a logarithmic connection is 
	well-defined, and has poles along $D$ (see \cite[Remark 3.8]{BornLaar23}). 
	Thus we get a logarithmic connection $\sigma_*\nabla$ on $\sigma_*E$: 
	$$\sigma_*\nabla : \sigma_*E \longrightarrow 
	\sigma_*E\otimes_{\mc{O}_X}\Omega^1_{X/\bb C}(\log D).$$
\end{remark}

Now we define the notions of {\it real logarithmic connections} and {\it real parabolic connections}. 

\begin{definition}\label{def:real-logarithmic-connection}
	Let $D$ be a real sncd on a real variety $(X, \sigma)$, and let 
	$(E, \sigma_E)$ be a real vector bundle on $(X, \sigma)$. 
	\begin{enumerate}[(i)]
		\item  A logarithmic connection $\nabla$ on $E$ is said to be a 
		\textit{real logarithmic connection} if $\sigma_E$ is a morphism of connections 
		between $(E, \nabla)$ and $(\sigma_{*}E, \sigma_*\nabla)$ in the sense of 
		Definition \ref{logarithmic connection definition}; 
		in other words, if the diagram below commutes:
		\begin{equation}\label{diag:def-of-real-log-connection}
			\begin{gathered}
				\xymatrix{
					E \ar[rr]^-{\nabla} \ar[d]_{\sigma_E} && 
					E\otimes\Omega^1_{X/\bb C}(\log D) \ar[d]^{\sigma_E\otimes\Id} \\
					\sigma_*E \ar[rr]^-{\sigma_*\nabla} && 
					\sigma_*E\otimes\Omega^1_{X/\bb C}(\log D)
				}
			\end{gathered}
		\end{equation}
		A \textit{real algebraic 
		connection} is defined similarly.

		\item A \textit{real parabolic connection} on $(X, \sigma)$ is a parabolic connection 
		$$(E_{\bullet}, \nabla_{\bullet}) : \left(\frac{1}{r}\bb Z\right)^{\rm op} 
		\longrightarrow \textnormal{Con}(X, D)$$ 
		together with a natural isomorphism of functors 
		$\sigma_{E_\bullet} : E_{\bullet} \longrightarrow \sigma_*(E_{\bullet})$ 
		such that for each $\ell \in \bb Z$ the following diagram is commutative: 
		\begin{equation*}
			\begin{gathered}
				\xymatrix{
					E_{\frac{\ell}{r}} \ar[rr]^-{\nabla_{\frac{\ell}{r}}} 
					\ar[d]_{\sigma_{E_{\ell/r}}} 
					&& E_{\frac{\ell}{r}} \otimes \Omega^1_{X/\bb C}(\log D) 
					\ar[d]^{\sigma_{E_{\ell/r}}\otimes\Id} \\
					\sigma_*(E_{\frac{\ell}{r}}) \ar[rr]^-{\sigma_*\left(\nabla_{\frac{\ell}{r}}\right)} 
					&& \sigma_*(E_{\frac{\ell}{r}}) \otimes\Omega^1_{X/\bb C}(\log D)
				}
			\end{gathered}
		\end{equation*}
	
	\end{enumerate}

\end{definition}


\begin{lemma}\label{lem:canonical-real-log-connection-on-O_X(D)}
	Let $B = \sum_{i\in I}\mu_i D_i$ be a Cartier divisor with support in $D$. 
	Assume that $D_j$ is locally defined by $x_j$, for all $j \in I$. 
	Then the canonical logarithmic connection 
		$$d(B) : \mc{O}_X(B) \rightarrow \mc{O}_X(B)\otimes\Omega^1_{X/\bb C}(\log D) $$
		of Lemma \ref{real structure on canonical connection lemma} is real.
\end{lemma}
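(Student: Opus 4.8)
The plan is to show that the canonical logarithmic connection $d(B)$ on $\mc{O}_X(B)$ is compatible with the real structure $\sigma_{\mc O_X(B)}$ induced on $\mc O_X(B)$, i.e.\ that the diagram \eqref{diag:def-of-real-log-connection} commutes for $(E, \nabla) = (\mc O_X(B), d(B))$. Since the statement is local on $X$ and both $d(B)$ and $\sigma$ are given by explicit local formulas, I would check commutativity on a suitable affine cover. First I would fix a point and choose local coordinates so that each component $D_j$ through the point is cut out by $x_j$; the local generator of $\mc O_X(B)$ is then $e := \prod_{i} x_i^{-\mu_i}$, and by Lemma \ref{real structure on canonical connection lemma} we have $d(B)(e) = -e \otimes \sum_i \mu_i \frac{dx_i}{x_i}$.

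The key step is to understand how the real structure acts in these local terms. Since $\sigma(D) = D$, the involution permutes the irreducible components of $D$; writing $\sigma$ in coordinates, the pullback $\sigma^* x_j$ is (up to a unit) a local equation for the component $\sigma^{-1}(D_j)$. The real structure $\sigma_{\mc O_X(B)}$ is built from the canonical isomorphisms $p$ and $h$ exactly as in Proposition \ref{prop:real-str-on-O_X(D)}, suitably adapted to the divisor $B$ (using that $B$ is supported on $D$ and hence $\sigma^* B = B$ when $B$ is $\sigma$-invariant; more care is needed if $\sigma$ merely permutes components, in which case $\sigma^* B = \sum_i \mu_{\sigma(i)} D_i$). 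I would track the image of $e$ under $\sigma_{\mc O_X(B)}$ and then apply $\sigma_* d(B)$, comparing it with first applying $d(B)$ and then $\sigma_{\mc O_X(B)} \otimes \Id$. The crucial computational input is that $\sigma$ is \emph{anti-holomorphic}, so on the level of differentials $\sigma$ intertwines $d$ with the conjugate differential; but because we are working with the holomorphic cotangent sheaf $\Omega^1_{X/\bb C}$ and viewing $\sigma$ as a morphism of schemes over $j^*$, the relevant relation is simply $\sigma^*(d\alpha) = d(\sigma^* \alpha)$ at the level of $\mc O_X$-module pullbacks, so that $\sigma^*\left(\frac{dx_i}{x_i}\right) = \frac{d(\sigma^* x_i)}{\sigma^* x_i}$, which is again of the form $\frac{dx_j}{x_j}$ up to an exact (holomorphic, pole-free) term coming from the unit factor.

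The main obstacle I anticipate is bookkeeping the unit factors and the permutation of components cleanly. When $\sigma^* x_i = v_i \cdot x_{\tau(i)}$ for a local unit $v_i$ and the induced permutation $\tau$ of the index set, the logarithmic differential picks up $\frac{dx_{\tau(i)}}{x_{\tau(i)}} + \frac{dv_i}{v_i}$, and the extra term $\frac{dv_i}{v_i}$ lies in $\Omega^1_{X/\bb C}$ (no poles, since $v_i$ is a unit). I would need to verify that these correction terms cancel against the analogous terms produced by transporting the local generator $e$ through $\sigma_{\mc O_X(B)}$; this cancellation is exactly what the defining compatibility $\sigma_*(\sigma_{\mc O_X(B)})\circ \sigma_{\mc O_X(B)} = \Id$ and the naturality of the canonical isomorphism $\sigma^*\sigma^* \cong \Id$ are designed to guarantee. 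Once the local commutativity is checked on overlaps, the compatibility is automatic because all the maps involved are canonical and hence glue, so the diagram commutes globally and $d(B)$ is real.

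The cleanest route, which I would adopt to minimize coordinate computations, is to observe that $d(B)$ is the \emph{unique} logarithmic connection on $\mc O_X(B)$ with the prescribed residues $\mu_i$ along each $D_i$ (the residues determine the connection up to an $\mathcal O_X$-linear summand, and canonicity pins it down). Since $\sigma_*(\mc O_X(B), d(B))$ is again a logarithmic connection on $\mc O_X(\sigma^* B) \cong \mc O_X(B)$ whose residues are the $\sigma$-images of the original ones — and these agree with the $\mu_i$ precisely because $B$ is $\sigma$-invariant — the pushed-forward connection must coincide with $d(B)$ under the canonical identification. Thus $\sigma_{\mc O_X(B)}$ automatically intertwines $d(B)$ with $\sigma_* d(B)$, giving the real structure. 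I expect this uniqueness/canonicity argument to be the shortest path, with the explicit local check serving as a fallback to verify the residue-matching claim.
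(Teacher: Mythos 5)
Your fallback route --- the explicit local check --- is exactly the paper's proof: the paper notes that $\sigma^*B = B$ makes $\sigma_{\mc O_X(B)}$ permute the local equations of $B$, and then invokes the local formula for $d(B)$ from Lemma \ref{real structure on canonical connection lemma}. Your version is in fact more careful than the paper's on two counts: you observe that ${\rm Supp}(B)=D$ being real does \emph{not} by itself give $\sigma^*B=B$ (one needs the multiplicities $\mu_i$ to be constant on $\sigma$-orbits of components, which the paper silently assumes when it writes ``since ${\rm Supp}(B)=D$ is real, so is $B$''), and you track the unit factors $v_i$ explicitly. The cancellation you leave ``to verify'' does hold, but not for the reason you give (involutivity of the real structure and $\sigma^*\sigma^*\cong\Id$ play no role): writing $\sigma^{\#}(e) = v\cdot e$ with $v = \prod_i v_i^{-\mu_i}$, it is the elementary identity $dv/v = -\sum_i \mu_i\, dv_i/v_i$ together with the Leibniz rule that makes the extra term in $d(B)(v\cdot e)$ match the correction terms $\sum_i \mu_i\, dv_i/v_i$ appearing in $\sigma^{\#}\bigl(d(B)(e)\bigr)$. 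With that one line supplied, the local route is sound.

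However, the route you say you would actually adopt --- uniqueness of the logarithmic connection with prescribed residues --- has a genuine gap. Two logarithmic connections on the same line bundle with equal residues differ by a global section of $\Omega^1_{X/\bb C}$, and on a projective variety this space need not vanish (any curve of genus $\geq 1$, any abelian variety). So residue-matching does not force $\sigma_* d(B)$ to coincide with $d(B)$ under the identification $\sigma_{\mc O_X(B)}$; the phrase ``canonicity pins it down'' is carrying the entire argument and is not itself a proof --- making it precise means showing that $\sigma_* d(B)$ satisfies the same defining local formula as $d(B)$, which is precisely the coordinate computation you were trying to avoid. Drop the uniqueness shortcut and promote the local check, with the cancellation identity above, to the main proof.
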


\begin{proof}
	Since ${\rm Supp}(B) = D$ is real, so is $B$, i.e., $\sigma^*B = B$. 
	Then the invertible sheaf $\mc O_X(B)$ admits a real structure $\sigma_{\mc O_X(B)}$, 
	(see Proposition \ref{prop:real-str-on-O_X(D)}). 
	We claim that $d(B)$ is a real logarithmic connection with respect to $\sigma_{\mc O_X(B)}$. 
	The condition $\sigma^*(B) = B$ ensures that the local equations for $B$ and $\sigma_*B$ 
	remain the same, and so $\sigma_{\mc O_X(B)}$ simply permutes the local equations for $B$. 
	Now it follows from the description of $d(B)$ on local sections that the 
	diagram \eqref{diag:def-of-real-log-connection} commutes for $(E, \nabla) = (\mc O_X(B), d(B))$. 
\end{proof}

\begin{proposition}\label{prop:tensor-product-of-real-log-connections}
	As before, let $B = \sum\limits_{i \in I} \mu_j D_j$ be a divisor on $X$ 
	supported in $D$. If $(E, \nabla_E)$ and $(F, \nabla_F)$ are real logarithmic connections 
	on $(X, \sigma)$ with logarithmic poles along $B$, so are their direct sum and tensor product. 
	In particular, $B$-twist of any real logarithmic connection is a real logarithmic connection. 
\end{proposition}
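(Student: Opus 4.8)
The plan is to prove the statement for each operation (direct sum, tensor product, $B$-twist) by checking that the naturally constructed real structure on the combined bundle is compatible with the naturally constructed logarithmic connection, i.e.\ that the defining commutative square of Definition \ref{def:real-logarithmic-connection}(i) commutes. Throughout, I would use the tensor and dual constructions on real $\mc O_X$-modules from Remark (ii) following Definition \ref{def:real-structure-on-sheaves-defn}: given real structures $\sigma_E$ and $\sigma_F$, one equips $E \oplus F$ with $\sigma_E \oplus \sigma_F$ and $E \otimes F$ with $\sigma_E \otimes \sigma_F$, and these are again real structures because $\sigma_*$ commutes with direct sums and tensor products and because the involution condition $\sigma_*(\sigma_E)\circ\sigma_E = \Id$ is preserved under these operations.

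First I would handle the direct sum. The connection on $E \oplus F$ is $\nabla_E \oplus \nabla_F$, and the claim is that the square with vertical maps $\sigma_E \oplus \sigma_F$ and $(\sigma_E\oplus\sigma_F)\otimes\Id$ commutes. Since everything decomposes as a direct sum and each summand square commutes by hypothesis (each of $(E,\nabla_E)$ and $(F,\nabla_F)$ is real), the combined square commutes coordinatewise. This step is essentially formal.

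Next, for the tensor product, the relevant connection is $\nabla_{E\otimes F} = \nabla_E\otimes\Id_F + \Id_E\otimes\nabla_F$ as in Remark \ref{rem:B-twist-of-a-log-conn}, and the real structure is $\sigma_E\otimes\sigma_F$. The task is to verify commutativity of
\begin{equation*}
	\begin{gathered}
		\xymatrix{
			E\otimes F \ar[rr]^-{\nabla_{E\otimes F}} \ar[d]_{\sigma_E\otimes\sigma_F}
			&& (E\otimes F)\otimes\Omega^1_{X/\bb C}(\log D)
			\ar[d]^{(\sigma_E\otimes\sigma_F)\otimes\Id} \\
			\sigma_*(E\otimes F) \ar[rr]^-{\sigma_*(\nabla_{E\otimes F})}
			&& \sigma_*(E\otimes F)\otimes\Omega^1_{X/\bb C}(\log D).
		}
	\end{gathered}
\end{equation*}
I would expand $\nabla_{E\otimes F}$ into its two Leibniz summands and check each separately. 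For the $\nabla_E\otimes\Id_F$ term, commutativity follows by tensoring the defining square for $(E,\nabla_E)$ (which commutes since $\nabla_E$ is real) with the identity on $(F,\sigma_F)$; symmetrically for the $\Id_E\otimes\nabla_F$ term. The one point requiring care is that $\sigma_*$ distributes over the tensor-product connection, i.e.\ $\sigma_*(\nabla_E\otimes\Id_F+\Id_E\otimes\nabla_F)=\sigma_*\nabla_E\otimes\Id+\Id\otimes\sigma_*\nabla_F$ under the canonical identification $\sigma_*(E\otimes F)\cong\sigma_*E\otimes\sigma_*F$ (here using $\sigma^*(D)=D$ so that $\sigma$ fixes $\Omega^1_{X/\bb C}(\log D)$ appropriately, c.f.\ Remark \ref{push-forward of a logarithmic connection remark}); this compatibility of $\sigma_*$ with the Leibniz sum is the main technical point, and it reduces to the fact that $\sigma$ is an isomorphism and $\sigma_*$ is an additive tensor functor on the relevant categories.

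Finally, the $B$-twist statement is an immediate corollary: by Lemma \ref{lem:canonical-real-log-connection-on-O_X(D)} the canonical connection $(\mc O_X(B), d(B))$ is a real logarithmic connection, so the $B$-twist $(E,\nabla)\otimes(\mc O_X(B), d(B))$ is a tensor product of two real logarithmic connections, which is real by the tensor-product case just proved. I expect the tensor-product verification to be the only genuine obstacle, and even there the difficulty is purely bookkeeping around the canonical isomorphism $\sigma_*(E\otimes F)\cong\sigma_*E\otimes\sigma_*F$ and the compatibility of the residue/pole structure with $\sigma$; no new geometric input beyond $\sigma^*(D)=D$ is needed.
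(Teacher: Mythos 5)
Your proposal is correct; in fact the paper states Proposition \ref{prop:tensor-product-of-real-log-connections} with no proof at all, treating it as routine, and your argument --- the coordinatewise check for direct sums, the Leibniz-summand check for tensor products via the canonical identification $\sigma_*(E\otimes F)\cong\sigma_*E\otimes\sigma_*F$ (valid since $\sigma$ is an isomorphism), and the combination with Lemma \ref{lem:canonical-real-log-connection-on-O_X(D)} for the $B$-twist --- is exactly the verification the paper leaves to the reader. One small point of rigor: the individual summands $\nabla_E\otimes\Id_F$ and $\Id_E\otimes\nabla_F$ are not separately well defined as maps on the $\mc O_X$-module $E\otimes_{\mc O_X}F$ (only their sum is compatible with the relation $ae\otimes f=e\otimes af$), so rather than ``tensoring the defining square with the identity'' you should phrase the argument as evaluating the two well-defined composites $\bigl((\sigma_E\otimes\sigma_F)\otimes\Id\bigr)\circ\nabla_{E\otimes F}$ and $\sigma_*(\nabla_{E\otimes F})\circ(\sigma_E\otimes\sigma_F)$ on elementary tensors of local sections, where the computation does split into the two terms and each term agrees by the hypothesis squares for $E$ and $F$; with that rephrasing your proof goes through verbatim.
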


All the definitions and results in this section have their counterparts 
for the corresponding root stacks. An {\it effective divisor} on $\mf X$ is a 
closed substack $\mf D \subset \mf X$ of $\mf X$ such that its ideal sheaf 
$\mc I_{\mf D} \subseteq \mc O_{\mf X}$ is invertible (equivalently, 
for any \'etale atlas $U \to \mf X$ of $\mf X$, 
$\mf D\times_{\mf{X}} U$ is an effective Cartier divisor on $U$). 
An effective Cartier divisor $\mf D$ on $\mf X$ is said to have 
{\it normal crossings} if this is true in an \'etale chart of $\mf X$. 
Moreover, if $\mf D$ has normal crossing and all of its irreducible components 
are regular, we say that $\mf D$ is a {\it strict normal crossing divisor} on $\mf X$ 
(see \cite[pp.~9]{BornLaar23}). 
Let $D$ be an effective sncd on $X$. 
Then the associated $r$-th root stack $\mf X = \mf{X}_{(\mc{O}_X(D),\, s_D,\, r)}$ 
is smooth \cite[Proposition 2.4]{BornLaar23}. 
Let $\pi : \mf X \longrightarrow X$ be the canonical morphism. 
The tautological invertible sheaf $\ms M$ on $\mf X$ gives rise to an 
effective divisor $\mf D$ on $\mf X$, which is also a strict normal crossing 
divisor \cite[\S 3.3.1]{BornLaar23} satisfying the relation $\pi^*(D) = r\mf D$. 
Moreover, as a log-pair, the map $(\pi, r) : (\mf X, \mf D) \longrightarrow (X, D)$ 
is a log-\'etale morphism \cite[\S 3.3.1]{BornLaar23}. 
In particular, for a sncd $\mf B$ on $\mf X$ \cite[Definition 2.6]{BornLaar23}, 
by considering the \'etale atlases on $\mf X$, one obtains a stacky version of 
the sheaf of logarithmic differentials for stacks, 
namely $\Omega^1_{{\mf X}/{\bb C}}(\log\mf B)$ on $\mf X$ [\S\,3.3.1, \textit{loc. cit.}], 
and a canonical logarithmic connection $d(\mf B)$ on the line bundle $\mc{O}_{\mf X}(\mf B)$.

\begin{lemma}\label{lem:logarithmic-differential-isomorphism}
	The induced morphism 
	$\pi^*\Omega^1_{X/\bb C}(\log(D)) \longrightarrow \Omega^1_{{\mf X}/{\bb C}}(\log(\mf D))$ 
	under the log-\'etale morphism $\pi$ is an isomorphism. 
\end{lemma}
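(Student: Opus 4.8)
The plan is to deduce the isomorphism from the fact, recorded just before the statement, that the morphism of log-pairs $(\pi, r) : (\mf X, \mf D) \to (X, D)$ is log-\'etale. Recall that $\Omega^1_{\mf X/\bb C}(\log \mf D)$ is defined by computing logarithmic differentials on an \'etale atlas of $\mf X$ and descending, and likewise $\Omega^1_{X/\bb C}(\log D)$ is the sheaf of log-differentials of the log-pair $(X, D)$ over $\Spec \bb C$ equipped with the trivial log structure. To the tower of log-morphisms $(\mf X, \mf D) \to (X, D) \to \Spec \bb C$ is associated the fundamental right-exact sequence of K\"ahler log-differentials
$$\pi^*\Omega^1_{X/\bb C}(\log D) \longrightarrow \Omega^1_{\mf X/\bb C}(\log \mf D) \longrightarrow \Omega^1_{(\mf X,\,\mf D)/(X,\,D)} \longrightarrow 0,$$
whose leftmost arrow is precisely the morphism appearing in the statement. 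Thus it suffices to show this arrow is both surjective and injective.

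First I would invoke log-\'etaleness directly. By definition a log-\'etale morphism is log-smooth with vanishing sheaf of relative logarithmic differentials; hence $\Omega^1_{(\mf X,\,\mf D)/(X,\,D)} = 0$, and the displayed sequence immediately gives surjectivity. For injectivity I would use that, for a log-smooth morphism, the cotangent sequence is in addition left-exact and locally split, the relevant sheaves being locally free. Since these are \'etale-local assertions on $\mf X$, I would pass to an \'etale atlas $U \to \mf X$, where $(\pi, r)$ becomes a genuine log-smooth (indeed log-\'etale) morphism of schemes, apply the standard theory of logarithmic differentials (Kato) there, and descend. This yields that the leftmost arrow is an isomorphism.

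To make the mechanism transparent, I would also record the computation on an \'etale chart over a neighbourhood $U \to X$ on which $\mc O_X(D)$ is trivialized and $s_D$ is a local equation $x$ of $D$: there $\ms M$ is trivialized by a root $w$ with $w^r = x$, the divisor $\mf D$ is $\{w = 0\}$, and $\Omega^1_{X/\bb C}(\log D)$ is generated near $D$ by $\tfrac{dx}{x}$ together with the differentials transverse to $D$. The pullback sends
$$\frac{dx}{x} = \frac{d(w^r)}{w^r} = r\,\frac{dw}{w},$$
and is an isomorphism in the transverse directions because $\pi$ is \'etale there. As $r$ is a nonzero scalar, hence a unit in $\mc O$, the assignment $\tfrac{dx}{x} \mapsto r\,\tfrac{dw}{w}$ is an isomorphism onto the generator $\tfrac{dw}{w}$ of $\Omega^1_{\mf X/\bb C}(\log\mf D)$, confirming the claim locally.

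The main obstacle is not conceptual but lies in the careful handling of two points. First, one must ensure that the stacky sheaf $\Omega^1_{\mf X/\bb C}(\log \mf D)$, defined through \'etale charts, is correctly matched with the atlas-level logarithmic differentials so that the descent step is legitimate; this is where the log-\'etale property does the real work, in particular along the crossing locus of $D$, where the naive single-root chart is more delicate. Second, the upgrade from surjectivity to an isomorphism rests on the left-exactness of the log-smooth cotangent sequence rather than on the vanishing of relative differentials alone, so I would be careful to invoke log-smoothness, and not merely log-\'etaleness, at that step. The invertibility of $r$ in $\bb C$ is what ultimately distinguishes an isomorphism from a mere inclusion of index $r$.
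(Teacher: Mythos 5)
Your proof is correct and takes essentially the same route as the paper: the paper's entire proof is a one-line reduction to the scheme-level statement \cite[Lemma 3.5]{BornLaar23} by passing to \'etale atlases of $\mf X$, which is exactly your descent step. The additional material you supply --- the right-exact log cotangent sequence, the vanishing of $\Omega^1_{(\mf X,\,\mf D)/(X,\,D)}$ for a log-\'etale map, left-exactness in the log-smooth case, and the chart computation $\frac{dx}{x} = r\,\frac{dw}{w}$ --- is precisely the content of the scheme-level input that the paper cites rather than proves, so your argument fleshes out, rather than deviates from, the paper's.
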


\begin{proof}
	This is a stacky version of \cite[Lemma 3.5]{BornLaar23}; 
	a similar proof works by considering \'etale atlases for $\mf X$. 
\end{proof}

We refer the reader to \cite[\S 2.1.3, \S 3.3.2]{BornLaar23} for more details.


\subsection{Real logarithmic connection and real parabolic connection}
It is shown in \cite{BisMajWan12} that there is an equivalence of the category 
of logarithmic connections on $(\mf X, \mf D)$ and the category of parabolic connections 
on $(X, D)$ having parabolic weights in $\frac{1}{r}\bb{Z}$ along $D$. 
Furthermore, this equivalence restricts to give an equivalence of the category of 
holomorphic connections on $\mf X$ with the category of strongly parabolic connections 
on $(X, D)$ with parabolic weights in $\frac{1}{r}\bb Z$ along $D$. It is given as follows.

\begin{enumerate}[(1)]
	\item \cite[Definition 4.18]{BornLaar23} 
	To each logarithmic connection $(\mc E, \nabla)$ on $(\mf X, \mf D)$, one associates 
	a parabolic connection $(E_{\bullet},\widehat{\nabla}_{\bullet})$ on $(X,D)$, 
	where 
	\begin{enumerate}[(i)]
		\item $E_{\bullet}$ is the parabolic bundle given as in 
		Theorem \ref{thm:Biswas-Borne-correspondence}, and 
		
		\item $\widehat{\nabla}_{\bullet}$ is given by 
		$$\widehat{\nabla}_{\frac{\ell}{r}} = \pi_*(\nabla(-\ell\mf D))\,,$$
		where $\nabla(-\ell\mf D)$ is the tensor product of the connections 
		$\nabla$ an $d(-\ell\mf D)$. 
		(Note that, the direct image of connection makes sense because the 
		morphism of log-pairs $(\pi, r) : (\mf X, \mf D) \longrightarrow (X, D)$ 
		is log-\'etale, see \cite[Remark 3.8 and Remark 4.19]{BornLaar23}). 
	\end{enumerate}
	
	\item \cite[Definition 4.29]{BornLaar23} 
	On the other hand, for any such parabolic connection 
	$(E_{\bullet}, \widehat{\nabla}_{\bullet})$ on $(X, D)$, one associates the 
	logarithmic connection $(\mc E, \nabla)$ on $\mf X$, where 
	$$\mc E = \int^{\frac{1}{r}\bb{Z}} 
	\pi^*(E_{\bullet})\otimes_{\mc O_{\mf X}} \mc O_{\mf X}(\bullet\,r \mf D)$$ 
	stands for the coend as in Theorem \ref{thm:Biswas-Borne-correspondence}, 
	and $\nabla$ is the unique connection on the coend $\mc E$ compatible with 
	the given connections at each term of the coend (the existence of such a 
	connection is guaranteed by \cite[Lemma 4.27]{BornLaar23}). 
\end{enumerate}



\begin{theorem}\label{thm:real-log-parbolic-connection-correspondence}
	Let $(X, \sigma)$ be a real variety and $D$ a reduced effective sncd satisfying 
	$\sigma^*(D) = D$. Let $\mf X$ be the $r$-th root stack associated to the divisor $D$. 
	Then there is an equivalence between the category of real logarithmic connections on 
	$(\mf X, \mf D)$ and the category of real parabolic connections on $(X, D)$ having 
	weights in $\frac{1}{r}\bb Z$. 
	
	Moreover, under this correspondence, the category of real holomorphic connections on 
	$\mf X$ is equivalent to the category of real strongly parabolic connections on $(X,D)$ 
	having weights in $\frac{1}{r}\bb Z$. 
\end{theorem}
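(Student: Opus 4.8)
The plan is to leverage the non-real (classical) equivalence from \cite{BornLaar23} recalled above, and show that it upgrades to the real setting by checking that each functor in both directions carries real structures to real structures and preserves the compatibility with connections. Concretely, given a real logarithmic connection $(\mc E, \nabla, \sigma_{\mc E})$ on $(\mf X, \mf D)$, I would first invoke Theorem \ref{thm:real-avatar-of-Biswas-Borne-correspondence} to obtain a real structure $\sigma_{E_\bullet}$ on the underlying parabolic bundle $E_\bullet$. The key new point is to verify that the parabolic connection $\widehat\nabla_\bullet$ with $\widehat\nabla_{\ell/r} = \pi_*(\nabla(-\ell\mf D))$ is real with respect to $\sigma_{E_\bullet}$, i.e. that the commuting square in Definition \ref{def:real-logarithmic-connection}(ii) holds for each $\ell$.

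The main tool for this verification is the identity $\pi\circ\sigma_{\mf X} = \sigma\circ\pi$ from Proposition \ref{real structure on root stack proposition}, together with Lemma \ref{lem:logarithmic-differential-isomorphism}, which gives $\pi^*\Omega^1_{X/\bb C}(\log D)\cong \Omega^1_{\mf X/\bb C}(\log\mf D)$ and hence a canonical compatibility between pushforward along $\pi$ and tensoring with logarithmic differentials. First I would observe that the hypothesis that $\nabla$ is real means precisely that $\sigma_{\mc E}$ intertwines $\nabla$ and $(\sigma_{\mf X})_*\nabla$. Next, by Lemma \ref{lem:canonical-real-log-connection-on-O_X(D)} the canonical connection $d(-\ell\mf D)$ on $\mc O_{\mf X}(-\ell\mf D)$ is real (the stacky analogue, with $\mf D$ replacing $D$), and by the stacky version of Proposition \ref{prop:tensor-product-of-real-log-connections} the twist $\nabla(-\ell\mf D)$ remains a real logarithmic connection. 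Applying $\pi_*$ to the defining commuting square of $\nabla(-\ell\mf D)$ being real, and transporting the right-hand column through the isomorphism of Lemma \ref{lem:logarithmic-differential-isomorphism} and the identification $\pi_*\circ(\sigma_{\mf X})_*\cong \sigma_*\circ\pi_*$, yields exactly the square asserting that $\widehat\nabla_{\ell/r}$ is real. Naturality in $\ell$ follows because all the identifications are natural, so one genuinely obtains a real parabolic connection.

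For the converse, I would start with a real parabolic connection $(E_\bullet,\widehat\nabla_\bullet,\sigma_{E_\bullet})$ and form $\mc E = \int^{\frac{1}{r}\bb Z}\pi^*(E_\bullet)\otimes \mc O_{\mf X}(\bullet\, r\mf D)$ with the coend connection $\nabla$ of \cite[Lemma 4.27]{BornLaar23}. The real structure $\sigma_{\mc E}$ is produced exactly as in the proof of Theorem \ref{thm:real-avatar-of-Biswas-Borne-correspondence}, using Remark \ref{rem:isomorphism-of-colimits} to propagate the termwise real structures to the coend. The content is that $\nabla$ is real, i.e. compatible with $\sigma_{\mc E}$; here I would use the uniqueness clause in \cite[Lemma 4.27]{BornLaar23}: both $\nabla$ and the connection obtained by conjugating it through $\sigma_{\mc E}$ restrict to the same compatible connection on each term of the coend (each term being real by assumption on $E_\bullet$ and by reality of $d(r\mf D)$), so by uniqueness they coincide, which is precisely the statement that $\nabla$ is real. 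Checking that the two constructions are mutually quasi-inverse as functors of real objects reduces to the non-real statement together with the fact that our added real structures are the canonical ones, and that morphisms preserving real structures correspond under the equivalence.

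The strongly parabolic refinement is essentially formal once the main equivalence is real: the condition distinguishing holomorphic connections on $\mf X$ (no poles along $\mf D$) and strongly parabolic connections on $(X,D)$ (prescribed residue eigenvalues $\frac{\ell}{r}$) is preserved by the functors already in the non-real case \cite{BornLaar23}, and it is a condition on the connection alone, not on the real structure, so it is automatically compatible with $\sigma_{\mc E}$ and $\sigma_{E_\bullet}$; thus the real equivalence restricts to the asserted subcategories. The main obstacle I anticipate is the bookkeeping in the forward direction: correctly matching the real structure $\pi_*(\sigma_{\mc E}\otimes\sigma_{d(-\ell\mf D)})$ on $\widehat\nabla_{\ell/r}$ with the real structure $\sigma_{E_{\ell/r}}$ coming from Theorem \ref{thm:real-avatar-of-Biswas-Borne-correspondence}, and ensuring that the canonical isomorphisms $\pi_*\circ(\sigma_{\mf X})_*\cong\sigma_*\circ\pi_*$ and the one in Lemma \ref{lem:logarithmic-differential-isomorphism} are compatible as required so that the residue computation and the diagram chase close up; this is where I would spend the most care, likely by working on an étale atlas $u:U\to\mf X$ and using Lemma \ref{lem:pullback-of-atlas-under-real-str-of-root-stack} to reduce everything to the scheme-level statements already established.
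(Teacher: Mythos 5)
Your proposal is correct and follows essentially the same route as the paper's proof: the forward direction (twisting by the real canonical connection $d(-\ell\mf D)$, then pushing forward along $\pi$ using $\pi_*\circ(\sigma_{\mf X})_*\cong\sigma_*\circ\pi_*$ and Lemma \ref{lem:logarithmic-differential-isomorphism}) is identical, and your converse rests on exactly the same termwise compatibility checks (reality of $\pi^*\widehat{\nabla}_\bullet$ and of $d(\bullet\,r\mf D)$) that the paper verifies. The only cosmetic difference is that you close the converse via the uniqueness clause of \cite[Lemma 4.27]{BornLaar23}, whereas the paper packages those same checks as an isomorphism of direct systems in $\Con(\mf X,\mf D)$ and invokes Remark \ref{rem:isomorphism-of-colimits} --- a route the paper itself notes is interchangeable with the uniqueness argument.
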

\begin{proof}
	It is enough to prove that the correspondence stated above preserves the real structures 
	in question. Let $\nabla$ be a real logarithmic connection on a real vector bundle 
	$(\mc E, \sigma_{\mc E})$ on $(\mf X, \mf D)$. For each integer $\ell$, we can take 
	the tensor product of this connection with the canonical connection $d(-\ell\mf D)$ on 
	$\mc{O}_{\mf X}(-\ell\mf D)$, which leads us to the following commutative diagram: 
	\begin{equation*}
		\xymatrix{
			\mc{E}(-\ell\mf D) \ar[rrr]^-{\nabla(-\ell\mf D)} 
			\ar[d]_{\sigma_{\mc E(-\ell\mf D)}} &&& 
			\mc{E}(-\ell\mf D)\otimes_{\mc{O}_{\mf{X}}} \Omega^1_{\mf X/\bb C}(\log \mf D) 
			\ar[d]^{\sigma_{\mc E(-\ell\mf D)}\otimes\Id} \\ 
			(\sigma_{\mf X})_*{(\mc E(-\ell\mf D))} 
			\ar[rrr]^-{(\sigma_{\mf X})_*\nabla(-\ell\mf D)} 
			&&& (\sigma_{\mf X})_*{(\mc E(-\ell\mf D))} 
			\otimes_{\mc{O}_{\mf{X}}} \Omega^1_{\mf{X}/\bb C}(\log \mf D)
		}
	\end{equation*}
	where $(\sigma_{\mf X})_*\nabla(-\ell\mf D)$ is taken in the same sense as in 
	Definition \ref{def:real-logarithmic-connection} for its stacky counterpart. 
	Applying $\pi_*$ on this diagram and using 
	$\pi_*\circ (\sigma_{\mf{X}})_* = \sigma_*\circ\pi_*$ 
	(c.f. Proposition \ref{prop:real-str-on-root-stack}) 
	and the fact that 
	$\pi^*\Omega^1_{X/\bb C}(\log D) \cong \Omega^1_{\mf X/\bb C}(\log \mf D)$ 
	(c.f.Lemma \ref{lem:logarithmic-differential-isomorphism}) 
	together with projection formula, we get the following diagram:
	\begin{equation*}
		\xymatrix{
			E_{\frac{\ell}{r}} \ar[rrr]^-{\widehat{\nabla}_{\frac{\ell}{r}}} 
			\ar[d]_{\sigma_{E_{\ell/r}}} &&& 
			E_{\frac{\ell}{r}} \otimes_{\mc O_X} \Omega^1_{X/\bb C}(\log D) 
			\ar[d]^{\sigma_{E_{\ell/r}}\otimes\Id} \\
			\sigma_*(E_{\frac{\ell}{r}}) 
			\ar[rrr]^-{\sigma_*\left(\widehat{\nabla}_{\frac{\ell}{r}}\right)} 
			&&& \sigma_*(E_{\frac{l}{r}}) \otimes_{\mc O_X} \Omega^1_{X/\bb C}(\log D)
		}
	\end{equation*} 
	This produces a real parabolic connection $\widehat{\nabla}_{\bullet}$ 
	on the real parabolic bundle $(E_{\bullet}, \sigma_{E_\bullet})$. 
	
	Conversely, given a real parabolic connection $\widehat{\nabla}_{\bullet}$ 
	on a real parabolic bundle $(E_{\bullet}, \sigma_{E_\bullet})$, the assocaited coend 
	$$\mc{E} = \int^{\frac{1}{r}\bb{Z}} 
	\pi^*(E_{\bullet})\otimes_{\mc O_{\mf X}}\mc O_X(\bullet\,r\mf D)$$ 
	admits a real structure $\sigma_{\mc E}$ 
	(see Theorem \ref{thm:real-avatar-of-Biswas-Borne-correspondence}). 
	Moreover, $\mc E$ admits a logarithmic connection $\nabla$ compatible with the 
	given connections on each pieces of the coend, namely 
	$\pi^*(\widehat{\nabla}_{\bullet})\otimes d(\bullet\,r\mf D)$ 
	(see \cite[Lemma 4.27]{BornLaar23}). 
	Consequently, the pair $(\mc E, \nabla)$ can be interpreted as a 
	coend in the category $\Con(\mf X, \mf D)$: 
	\begin{equation}\label{def:eqn-for-coend-of-parabolic-connection}
		(\mc E, \nabla) = \int^{\frac{1}{r}\bb{Z}} 
		\left(\pi^*(E_{\bullet})\otimes_{\mc{O}_{\mf{X}}} 
		{\mc O}_{\mf X}(\bullet\,r\mf D), 
		\pi^*(\widehat{\nabla}_{\bullet})\otimes d(\bullet r\mf D)
		\right). 
	\end{equation}
	Consider the logarithmic connection $(\left(\sigma_{\mf X}\right)_*\nabla)$ on 
	$\left(\sigma_{\mf X}\right)_*\mc E$. Since $\sigma_{\mf X}$ is an isomorphism, 
	it follows that $\left((\sigma_{\mf X})_*\mc E, (\sigma_{\mf X})_*\nabla\right)$ 
	is the coend of the direct system 
	\begin{align}\label{coend of connections 2}
		\left(\left(
		\sigma_{\mf{X}}\right)_*\left(\pi^*E_{\bullet}\otimes_{\mc{O}_{\mf X}} \mc{O}_{\mf X}(\bullet r\mf D)\right), \left(\sigma_{\mf{X}}\right)_*((\pi^*\widehat{\nabla}_{\bullet})\otimes d(\bullet r\mf D))\right): \left(\frac{1}{r}\bb Z\right)^{op}\times\frac{1}{r}\bb Z\rightarrow \textnormal{Con}(\mf X,\mf D) 
	\end{align}
	(Alternately, one can use the uniqueness part of \cite[Lemma 4.27]{BornLaar23} 
	to conclude the same.) 

	Next, we have seen in the proof of Theorem \ref{thm:real-avatar-of-Biswas-Borne-correspondence} that there exists an isomorphism of functors between the two direct systems of underlying bundles 
	$$\left(
	\sigma_{\mf{X}}\right)_*\left(\pi^*E_{\bullet}\otimes_{\mc{O}_{\mf X}} \mc{O}_{\mf X}(\bullet r\mf D)\right)\,\, \text{and}\,\, \pi^*E_{\bullet}\otimes_{\mc{O}_{\mf X}} \mc{O}_{\mf X}(\bullet r\mf D) : \left(\frac{1}{r}\bb Z\right)^{op} \times\frac{1}{r}{\bb Z}\rightarrow \textnormal{\textbf{Vect}}(\mf X)$$
	We claim that this is in fact an isomorphism of functors between the direct systems of connections in \eqref{def:eqn-for-coend-of-parabolic-connection} and \eqref{coend of connections 2}, in other words, we need to check its compatibility with their respective logarithmic connections $\left(\sigma_{\mf{X}}\right)_*((\pi^*\widehat{\nabla}_{\bullet})\otimes d(\bullet r\mf D))$ and $(\pi^*\widehat{\nabla}_{\bullet})\otimes d(\bullet r\mf D)$.
	\begin{enumerate}[(1)]
		\item We can write $\left(\sigma_{\mf X}\right)_*(\pi^*E_{\bullet})\simeq \sigma_*(\pi^*E_{\bullet})$, and it follows from \eqref{isomorphism of functors of direct systems 1} that the functorial isomorphism in question between 
		$$\pi^*E_{\bullet}\otimes_{\mc{O}_{\mf X}} \mc{O}_{\mf X}(\bullet r\mf D)\,\,\text{and}\,\, \left(
		\sigma_{\mf{X}}\right)_*\left(\pi^*E_{\bullet}\,\underset{{\mc{O}_{\mf{X}}}}{\otimes}\, \mc{O}_{\mf X}(\bullet r\mf D)\right)\simeq \pi^*(\sigma_*(E_{\bullet}))\,\underset{{\mc{O}_{\mf{X}}}}{\otimes}\,\left(\sigma_{\mf{X}}\right)_*\mc{O}(\bullet r\mf D)$$
		is given precisely by $\pi^*\left(\sigma_{E_\bullet}\right)\otimes \sigma_{\mc{O}(\bullet r\mf D)}$\ ,\\
		\item Since $\widehat{\nabla}_{\bullet}$ is compatible with the real structure $\sigma_{E_\bullet}$, its pull-back $\pi^*(\widehat{\nabla}_{\bullet})$ is compatible with the real structure $\pi^*(\sigma_{E_{\bullet}})$ on $\pi^*(E_{\bullet})$. On the other hand, the isomorphism $\mc{O}_{\mf X}(\bullet r\mf D) \simeq \pi^*(\mc{O}_X(\bullet D))$ respects the real structures $\sigma_(\bullet  r\mf D)$ and $\pi^*(\sigma_{\mc{O}(\bullet D)})$ [Lemma \ref{real structure preserving isomorphism}], and since the connections $d(\bullet D)$ are compatible with the real structures $\sigma_{\mc{O}(\bullet D)}$, it follows that their pull-backs $\pi^*(d(\bullet D))$ are compatible with $\pi^*(\sigma_{\mc{O}(\bullet D)})$, which implies that the connections $d(\bullet r\mf D)$ are compatible with $\sigma_{\mc{O}(\bullet r\mf D)}$.\\
		\item Thus, the tensor product connection $\pi^*(\widehat{\nabla}_{\bullet})\otimes d(\bullet r\mf D)$ is compatible with the tensor product of the real structures $\pi^*(\widetilde{\sigma}^{(E_{\boldsymbol{\cdot}})})\otimes \sigma_{\mc{O}(\bullet r\mf D)}$\ , which is precisely the functorial isomorphism in $(1)$.
	\end{enumerate}
	Thus the direct systems of connections in \eqref{def:eqn-for-coend-of-parabolic-connection} and \eqref{coend of connections 2} are isomorphic, which implies that their colimits, namely the coends $(\mc E, \nabla)$ and $\left(\left(\sigma_{\mf X}\right)_*\mc E, \left(\sigma_{\mf X}\right)_*\nabla\right)$ are isomorphic, by our Remark \ref{rem:isomorphism-of-colimits}. This isomorphism makes $\nabla$ into a real logarithmic connection on the real vector bundle $(\mc E, \sigma_{\mc E})$.
\end{proof}

\section{Root stacks and ramified Galois covers}
In this section, we wish to take a different approach to prove some of the results in the previous section for a smooth projective variety $X$ from the perspective of group actions and ramified covers. \\
To facilitate our discussion, let us first consider the case when a real variety has an action of a finite group compatible with the real structure in a certain sense, and prove some results related to this situation. These results will be eventually used in our main theorem in this section.\\
Let $Y$ be a complex variety with a real structure $\sigma_Y$. Let $\Gamma$ be a finite subgroup of $ \text{Aut}(Y)$, the group of algebraic automorphisms of $Y$. Thus $\Gamma$ acts faithfully on $Y$. Moreover, let us assume that
\begin{align}\label{group action}
	\sigma_Y(g\cdot y) = g^{-1} \sigma_Y(y) \,\,\forall\,y\in Y\,.
\end{align}
A $\Gamma$-equivariant vector bundle on $Y$ is a vector bundle $E$ on $Y$ with a compatible $\Gamma$-action (see, e.g. \cite[\S1.3]{BisMajWan12}). It is not hard to see that this induces an isomorphism of vector bundles 
\begin{align}\label{equivariant bundle conditions 1}
	\rho_g : E \simeq g^*E
\end{align}
for every $g\in \Gamma$.\\

\begin{definition}\label{real equivariant connection definition}
	Let $(E,\widetilde{\sigma}_Y^E)$ be a real $\Gamma$-equivariant vector bundle on $(Y,\sigma_Y)$ (see \cite[Definition 4.4]{Amrutiya14a} for the definition of real equivariant bundles). A \textit{real} $\Gamma$-\textit{holomorphic connection} on $E$ is a holomorphic connection $$\nabla: E\rightarrow E\otimes\Omega^1_Y$$
	which satisfies the following two properties:
	\begin{enumerate}[(a)]
		\item For every $g\in \Gamma$, the following diagram commutes:
		\begin{align}\label{real connection diagram 1}
			\xymatrix@=1.3cm{
				E \ar[d]_{\rho_g} \ar[r]^(.4){\nabla} & E\otimes\Omega^1_Y 
				\ar[d]^{\rho_{g}\otimes\text{Id}} \\
				g^*E \ar[r]^(.4){g^*\nabla} & g^*E\otimes \Omega^1_Y
			}
		\end{align}
		where $g^*\nabla$ is the unique pull-back connection on $g^*E$\,(in other words, the isomorphism $\rho_g$  takes $\nabla$ to the pull-back connection $g^*\nabla$).\\
		\item $\nabla$ is compatible with the real structure making the diagram below commute:
		\begin{align}\label{real connection diagram 2}
			\xymatrix{
				E \ar[rr]^{\nabla} \ar[dd]_{\widetilde{\sigma}_Y^{E}} && E\otimes\Omega^1_Y \ar[dd]^{\widetilde{\sigma}_Y^{E}\otimes \textnormal{Id}} \\ 
				&&\\
				(\sigma_Y)_*E \ar[rr]^(.4){(\sigma_Y)_*\nabla} && (\sigma_Y)_*E \otimes \Omega^1_Y
			}
		\end{align}
		where $(\sigma_Y)_*\nabla$ is is as in Remark \ref{push-forward of a logarithmic connection remark}.
	\end{enumerate}
\end{definition}

\subsection{Real structure on quotient stack}\hfill\\
Let us now work with a real variety $(Y,\sigma_Y)$ together with an action of a finite group $\Gamma$ satisfying \eqref{group action}. 
\begin{proposition}\label{real structure on quotient stack proposition}
	The quotient stack $[Y/\Gamma]$ has a real structure induced from the real structure on $Y$.
\end{proposition}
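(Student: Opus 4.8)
The plan is to construct the real structure on $[Y/\Gamma]$ explicitly at the level of groupoid presentations, using the hypothesis \eqref{group action} as the crucial compatibility that makes the construction descend to the quotient. Recall that the quotient stack $[Y/\Gamma]$ can be described via its groupoid presentation $\Gamma\times Y\rightrightarrows Y$, where the two maps are the action $a(g,y)=g\cdot y$ and the projection $\mathrm{pr}_2(g,y)=y$. First I would define a morphism $\sigma_Y$ on the object space and examine how it interacts with this groupoid structure. The key computation is that the condition $\sigma_Y(g\cdot y)=g^{-1}\sigma_Y(y)$ says precisely that $\sigma_Y$ intertwines the $\Gamma$-action with the action twisted by the inversion automorphism $g\mapsto g^{-1}$ of $\Gamma$.

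The main steps, in order, are as follows. First I would define a functor on the presenting groupoid by setting $\Sigma_{\mathrm{ob}}:=\sigma_Y:Y\to Y$ on objects and $\Sigma_{\mathrm{mor}}(g,y):=(g^{-1},\sigma_Y(y))$ on morphisms. I would verify this is a genuine functor of groupoids: the source and target maps are respected exactly because of \eqref{group action}, since the target of $(g,y)$ is $g\cdot y$ and $\Sigma$ must send it to $\sigma_Y(g\cdot y)=g^{-1}\sigma_Y(y)$, which is indeed the target of $(g^{-1},\sigma_Y(y))$; compatibility with composition follows because inversion reverses the group multiplication in the correct way to match composition of morphisms in the action groupoid. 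Second, since morphisms of presenting groupoids induce morphisms of the associated quotient stacks, this yields a morphism $\sigma_{[Y/\Gamma]}:[Y/\Gamma]\to[Y/\Gamma]$. Third, I would check that $\sigma_{[Y/\Gamma]}$ lies over $j^*:\Spec\bb C\to\Spec\bb C$, which is immediate because $\sigma_Y$ does and the construction is compatible with the structure morphism. Fourth, I would verify the involution condition: $\sigma_{[Y/\Gamma]}\circ\sigma_{[Y/\Gamma]}$ is $2$-isomorphic to the identity, using that $\sigma_Y\circ\sigma_Y=\Id_Y$ on objects and that double inversion $g\mapsto (g^{-1})^{-1}=g$ is the identity on the group, so the induced functor on the groupoid is literally the identity; hence the induced stack morphism is $2$-isomorphic to $\Id_{[Y/\Gamma]}$.

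The step I expect to require the most care is checking that $\Sigma$ is a well-defined functor of groupoids, and in particular that it respects the source-target structure and composition; this is where the hypothesis \eqref{group action} is doing all the work, and the inversion $g\mapsto g^{-1}$ is exactly the ingredient that compensates for the anti-holomorphic (order-reversing, in the sense of the twisted action) nature of $\sigma_Y$. Concretely, the composite of $(g,y):y\to g\cdot y$ followed by $(h,g\cdot y):g\cdot y\to hg\cdot y$ is $(hg,y)$, and I would confirm that $\Sigma$ sends this to $((hg)^{-1},\sigma_Y(y))=(g^{-1}h^{-1},\sigma_Y(y))$, matching the composite of $\Sigma(g,y)=(g^{-1},\sigma_Y(y))$ followed by $\Sigma(h,g\cdot y)=(h^{-1},\sigma_Y(g\cdot y))=(h^{-1},g^{-1}\sigma_Y(y))$, where the last equality again invokes \eqref{group action}. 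Once the groupoid functor is established, passage to quotient stacks and verification of the remaining properties are formal. I would close by noting that this is the natural real structure, compatible in the obvious sense with $\sigma_Y$ on $Y$ via the atlas $Y\to[Y/\Gamma]$.
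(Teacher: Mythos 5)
Your route—building a strict endofunctor of the presenting action groupoid $\Gamma\times Y\rightrightarrows Y$ and descending to $[Y/\Gamma]$—is genuinely different from the paper's, which works with $T$-points directly: an object of $[Y/\Gamma](T)$ is a $\Gamma$-torsor $P\to T$ with an equivariant map $h\colon P\to Y$, and the paper sends it to the pullback $\sigma_Y^*P\subset P\times Y$ equipped with the twisted action $g\cdot(v,y)=(g^{-1}v,gy)$ and the second projection to $Y$. However, your key verification, compatibility of $\Sigma$ with composition, does not go through as you state it. The composite of $(g,y)\colon y\to g\cdot y$ with $(h,g\cdot y)\colon g\cdot y\to hg\cdot y$ is $(hg,y)$, and $\Sigma$ sends it to $((hg)^{-1},\sigma_Y(y))=(g^{-1}h^{-1},\sigma_Y(y))$; but the composite of $\Sigma(g,y)=(g^{-1},\sigma_Y(y))$ with $\Sigma(h,g\cdot y)=(h^{-1},g^{-1}\sigma_Y(y))$ is, by the very same composition rule, $(h^{-1}g^{-1},\sigma_Y(y))$. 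These agree if and only if $g^{-1}h^{-1}=h^{-1}g^{-1}$, i.e.\ $gh=hg$. Your stated reason---that inversion ``reverses the group multiplication in the correct way to match composition''---is exactly the problem rather than the solution: inversion is an anti-homomorphism, whereas composition in the action groupoid multiplies the labels in the same order on both sides, so for non-abelian $\Gamma$ the assignment $(g,y)\mapsto(g^{-1},\sigma_Y(y))$ is simply not a functor.

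The gap can be closed, but only by an observation you never make: hypothesis \eqref{group action}, together with the faithfulness of the action (recall $\Gamma\subset\Aut(Y)$), forces $\Gamma$ to be abelian. Indeed, $\sigma_Y((gh)\cdot y)=(gh)^{-1}\sigma_Y(y)=h^{-1}g^{-1}\sigma_Y(y)$, while $\sigma_Y(g\cdot(h\cdot y))=g^{-1}\sigma_Y(h\cdot y)=g^{-1}h^{-1}\sigma_Y(y)$; since $\sigma_Y$ is bijective and the action is faithful, $h^{-1}g^{-1}=g^{-1}h^{-1}$ for all $g,h\in\Gamma$. Once this is recorded, your composition check (and the rest of your argument, which is otherwise fine) goes through. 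It is worth noting that the paper's own proof hides the same subtlety: the twisted action $g\cdot(v,y)=(g^{-1}v,gy)$ satisfies $g_1\cdot\big(g_2\cdot(v,y)\big)=(g_1^{-1}g_2^{-1}v,\,g_1g_2y)$ whereas $(g_1g_2)\cdot(v,y)=(g_2^{-1}g_1^{-1}v,\,g_1g_2y)$, so it is a left action only when $\Gamma$ is abelian. So either prove this consequence of \eqref{group action} before the functoriality check, or restrict the statement to abelian $\Gamma$; as written, the crucial step of your proof is asserted for a reason that is false in general.
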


\begin{proof}
	We need to produce a morphism $$\sigma_{[Y/\Gamma]}: [Y/\Gamma]\rightarrow [Y/\Gamma]$$
	with the property that $\sigma_{[Y/\Gamma]}\circ \sigma_{[Y/\Gamma]}$ is $2$-isomorphic to the identity morphism. For any scheme $T$, we define the map
	\begin{equation*}
		\sigma_{[Y/\Gamma]}(T) : [Y/\Gamma](T) \rightarrow [Y/\Gamma](T)
	\end{equation*}
	in the following manner. An object $\tau$ of $[Y/\Gamma](T)$ corresponds to the data 
	\begin{align*}
		\xymatrix{
			P \ar[r]^{h} \ar[d]_{p} & Y \\
			T & 
		}
	\end{align*}
	where $p$ makes $P$ into a $\Gamma$-torsor over $T$, and $h$ is $\Gamma$-equivariant. Let us consider the fiber product
	\begin{align*}
		\xymatrix{
			\sigma_Y^*P \ar[r]^{h'} \ar[d]_{l} & Y \ar[d]^{\sigma_Y} \\
			P \ar[r]^{h} & Y
		}
	\end{align*}
	where 
	\begin{align*}
		\sigma_Y^*P & \simeq \{(v,y)\in P\times Y\,\mid\, h(v) = \sigma_Y(y)\}\\
		& = \{(v,y)\,\mid\,y=(\sigma_Y\circ h)(v)\}\\
		& =\{(v,\left(\sigma_Y\circ h)(v)\right)\,\mid\,v\in P\}
	\end{align*}
	and under this identification, the maps $h'$ and $l$ correspond to the projections onto $Y$ and $P$ respectively.
	From the fact that $(\sigma_Y\circ h)(g^{-1}v)=\sigma_Y(g^{-1}\cdot h(v))= g\cdot(\sigma_Y\circ h)(v)$, it is clear from the description of $\sigma_Y^*P$ that the $\Gamma$-action on $P\times Y$ given by 
	$$g\cdot(v,y)= (g^{-1}v, gy)\,,$$
	makes $\sigma_Y^*P$ a $\Gamma$-invariant subset of $P\times Y$, and under this action, the map $h'$ (which corresponds to the second projection) is $\Gamma$-equivariant.
	We then define the object $\sigma_{[Y/\Gamma]}(T)(\tau)$ as the following data:
	\begin{align}
		\xymatrix{ \sigma_Y^*P \ar[r]^{h'} \ar[d]_{p\circ l} & Y \\
			T & 
		}
	\end{align}
	It is now an easy matter to check that this gives us a real structure $\sigma_{[Y/\Gamma]}$ on $[Y/\Gamma]$. 
\end{proof}

\begin{theorem}\label{real connection on quotient stack correspond to equivariant connections theorem}
	Let $(Y,\sigma_Y)$ a real variety with an action of a finite group $\Gamma$ compatible with $\sigma_Y$ as above. The category of real holomorphic connections on the quotient stack $[Y/\Gamma]$ with the real structure $\sigma_{[Y/\Gamma]}$ is equivalent to the category of real $\Gamma$-holomorphic connections on $(Y,\sigma_Y)$.
\end{theorem}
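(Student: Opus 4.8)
The plan is to reduce the statement to the ordinary (non-real) equivalence between holomorphic connections on the quotient stack $[Y/\Gamma]$ and $\Gamma$-equivariant holomorphic connections on $Y$, and then to transport the two real structures through that equivalence. Recall that the canonical atlas $q : Y \to [Y/\Gamma]$ presents the stack by the groupoid $\Gamma \times Y \rightrightarrows Y$, and that pulling back along $q$ identifies a vector bundle (resp.\ a holomorphic connection) on $[Y/\Gamma]$ with a $\Gamma$-equivariant vector bundle (resp.\ with a connection satisfying diagram \eqref{real connection diagram 1} for every $g \in \Gamma$) on $Y$; this is descent along the atlas, and condition (a) of Definition \ref{real equivariant connection definition} is precisely the cocycle condition for this groupoid. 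So the only content to be added beyond the classical correspondence is the compatibility with the two real structures.

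First I would establish the analogue of Lemma \ref{lem:pullback-of-atlas-under-real-str-of-root-stack} for the atlas $q$: the square with top arrow $\sigma_Y$, bottom arrow $\sigma_{[Y/\Gamma]}$, and both vertical arrows $q$ is $2$-commutative (indeed cartesian), i.e.\ $\sigma_{[Y/\Gamma]}\circ q \cong q\circ\sigma_Y$. This can be read off directly from the construction in the proof of Proposition \ref{real structure on quotient stack proposition}: the universal trivial torsor $(\Gamma\times Y, h)$ over $Y$ defining $q$ is sent by $\sigma_{[Y/\Gamma]}$ to the datum $(\Gamma\times Y, \sigma_Y\circ h)$, and since $\sigma_Y(g\cdot y)=g^{-1}\sigma_Y(y)$ by \eqref{group action}, the resulting object is canonically the trivial torsor defining $q\circ\sigma_Y$, but with the $\Gamma$-action replaced by its inverse. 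This inversion is exactly where the hypothesis \eqref{group action} is used, and it is the reason the notion of \emph{real} $\Gamma$-equivariant bundle is the correct target category. As a consequence one obtains the functorial identities $(\sigma_Y)_*\circ q^* \cong q^*\circ(\sigma_{[Y/\Gamma]})_*$, parallel to those used in the proofs of Theorems \ref{thm:real-avatar-of-Biswas-Borne-correspondence} and \ref{thm:real-log-parbolic-connection-correspondence}.

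Next, given a real holomorphic connection $(\mc E,\sigma_{\mc E},\nabla)$ on $[Y/\Gamma]$, I would set $E:=q^*\mc E$ with its $\Gamma$-equivariant connection $q^*\nabla$ as above, and pull back $\sigma_{\mc E}:\mc E\to(\sigma_{[Y/\Gamma]})_*\mc E$ through the $2$-commutative square to obtain an isomorphism $\widetilde{\sigma}_Y^E:E\to(\sigma_Y)_*E$ with $(\sigma_Y)_*(\widetilde{\sigma}_Y^E)\circ\widetilde{\sigma}_Y^E=\Id_E$. Two checks then remain: that $\widetilde{\sigma}_Y^E$ is compatible with the equivariant structure $\rho_g$, making $(E,\widetilde{\sigma}_Y^E)$ a real $\Gamma$-equivariant bundle in the sense of \cite[Definition 4.4]{Amrutiya14a}, which follows from the groupoid-compatibility of $\sigma_{\mc E}$ together with the inverse-twisting recorded in the previous step; and that $q^*\nabla$ satisfies the real-connection diagram \eqref{real connection diagram 2}, which is simply the pullback along $q$ of the defining diagram of a real (holomorphic) connection from Definition \ref{def:real-logarithmic-connection}. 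Conversely, starting from a real $\Gamma$-holomorphic connection on $Y$, I would descend the bundle, the connection and the real structure along $q$: conditions (a) and (b) of Definition \ref{real equivariant connection definition} supply exactly the data needed to satisfy the descent conditions for the groupoid and for its real structure, so the descended connection on $[Y/\Gamma]$ is real. Finally I would verify that these two constructions are functorial and mutually quasi-inverse, which is immediate from the corresponding statement for the underlying non-real equivalence.

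The hard part will be the bookkeeping in the second and third steps: making precise that the pullback of the stack's real structure along the atlas intertwines the $\Gamma$-action on $E$ with the \emph{inverse} $\Gamma$-action on $(\sigma_Y)_*E$, and checking that this is exactly the compatibility between $\widetilde{\sigma}_Y^E$ and $\rho_g$ demanded by the definition of a real equivariant bundle. Everything else is a routine transcription of the non-equivariant correspondence via the identities $\sigma_{[Y/\Gamma]}\circ q\cong q\circ\sigma_Y$ and $(\sigma_Y)_*\circ q^*\cong q^*\circ(\sigma_{[Y/\Gamma]})_*$.
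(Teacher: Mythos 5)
Your proposal is correct and takes essentially the same route as the paper: pull back along the atlas $q:Y\to[Y/\Gamma]$, recover the $\Gamma$-equivariant connection by descent for the groupoid $\Gamma\times Y\rightrightarrows Y$ (pulling back to the slices $\{g\}\times Y$), and transfer the real structures via the cartesian square $q\circ\sigma_Y\cong\sigma_{[Y/\Gamma]}\circ q$ together with the atlas description of direct images. You are in fact slightly more thorough than the paper, which writes out only the direction from $[Y/\Gamma]$ to $Y$ and leaves the converse descent and the quasi-inverse verification implicit.
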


\begin{proof}
	Suppose $\widehat{\nabla}$ be a real holomorphic connection on a real vector bundle $(\mc{E},\widetilde{\sigma}_{[Y/\Gamma]}^{\mc{E}})$ on $[Y/\Gamma]$. The $\Gamma$-torsor $q: Y \rightarrow [Y/\Gamma]$ is an \'etale atlas. We have the cartesian diagram:
	\begin{align*}
		\xymatrix{
			\Gamma\times Y \ar[r]^{\lambda} \ar[d]_{p_{Y}} & Y \ar[d]^{q} \\
			Y \ar[r]^{q} & [Y/\Gamma]
		}
	\end{align*}
	where $\lambda$ and $p_Y$ denote the $\Gamma$-action and the second projection respectively. Then clearly $q\circ p_Y:\Gamma\times Y \rightarrow [Y/\Gamma]$ is also an \'etale atlas for $[Y/\Gamma]$, and consequently, for the associated vector bundle $E := \mc{E}_q$ on $Y$ together with a connection $\nabla := \widehat{\nabla}_q$\,, we have an isomorphism 
	\begin{align*}
		\mu_1: \lambda^*E \xrightarrow[]{\simeq} \mc{E}_{q\circ p_Y} 
	\end{align*}
	similarly, there is an isomorphism
	\begin{align*}
		\mu_2: p_Y^*E \xrightarrow[]{\simeq} \mc{E}_{q\circ \lambda}\,.
	\end{align*}
	Since $\mc{E}_{q\circ p_Y} \simeq \mc{E}_{q\circ\lambda}$\,, we get an isomorphism
	\begin{align}\label{isomorphism 1}
		\mu: \lambda^*E \xrightarrow[]{\simeq} p_Y^*E
	\end{align} 
	
	Now, from the definition of a connection in terms of atlases we get the following two commutative diagrams, namely
	\begin{align}\label{diagram 1}
		\xymatrix@=1.7cm{
			\lambda^*E \ar[d]_{\mu_1} \ar[r]^(.4){\lambda^*\nabla} &  \lambda^*E\otimes \Omega^1_{\Gamma\times Y} \ar[d]^{\mu_1\otimes\text{Id}} \\
			\mc{E}_{q\circ p_Y} \ar[r]^(.4){\widehat{\nabla}_{q\circ p_{Y}}} & \mc{E}_{q\circ p_Y} \otimes \Omega^1_{\Gamma\times Y}
		}
	\end{align}
	
	and 
	\begin{align}\label{diagram 2}
		\xymatrix@=1.7cm{
			p_Y^*E \ar[d]_{\mu_2} \ar[r]^(.4){p_Y^*\nabla} &  p_Y^*E\otimes \Omega^1_{\Gamma\times Y} \ar[d]^{\mu_2\otimes\text{Id}} \\
			\mc{E}_{q\circ \lambda} \ar[r]^(.4){\widehat{\nabla}_{q\circ \lambda}} & \mc{E}_{q\circ \lambda} \otimes \Omega^1_{\Gamma\times Y}
		}
	\end{align}
	Since the isomorphism $\mc{E}_{q\circ p_Y} \simeq \mc{E}_{q\circ\lambda}$ is compatible with the connections, it now easily follows that we can combine 
	the diagrams \eqref{diagram 1} and \eqref{diagram 2} to get the following diagram:
	\begin{align}\label{diagram 3}
		\xymatrix@=1.7cm{
			\lambda^*E \ar[d]_{\mu} \ar[r]^(.4){\lambda^*\nabla} & \lambda^*E\otimes \Omega^1_{\Gamma\times Y} \ar[d]^{\mu\otimes\text{Id}} \\
			p_Y^*E \ar[r]^(.4){p_Y^*\nabla} & p_Y^*E \otimes \Omega^1_{\Gamma\times Y}
		}
	\end{align}
	For any $g\in\Gamma$, let $\iota_g: \{g\}\times Y\hookrightarrow \Gamma\times Y$ denote the inclusion. Clearly,
	$$\iota_g^*(\lambda^*E)\simeq g^*E\,\,\,\textnormal{and}\,\,\,\iota_g^*(p_Y^*E)\simeq E$$
	under the natural idenfication $Y\simeq \{g\}\times Y$\,. Thus,
	Pulling back this diagram via $\iota_g$ and identifying $Y\simeq \{g\}\times Y$ gives us the commutative diagram
	\begin{align*}
		\xymatrix@=1.7cm{
			g^*E\ar[r]^{\rho_g} \ar[d]_{g^*\nabla} & E \ar[d]^{\nabla} \\
			g^*E\otimes \Omega^1_Y \ar[r]^{\rho_g\otimes\text{Id}} & E\otimes \Omega^1_Y
		}
	\end{align*}
	where $\rho_g$ is as in \eqref{equivariant bundle conditions 1}. But this is precisely the commutativity condition in \eqref{real connection diagram 1}. Thus $(\mc{E},\nabla)$ is a $\Gamma$-holomorphic connection on $Y$. \\
	To see that $\nabla$ is compatible with the real structure $\sigma_Y$ in the sense of Definition \ref{real equivariant connection definition} $(b)$, let us first mention that the following diagram is Cartesian, which is easy to see from the fact that $\sigma_Y$ is an isomorphism:
	\begin{align*}
		\xymatrix@=1.5cm{
			Y \ar[r]^{\sigma_Y} \ar[d]_{q} & Y \ar[d]^{q}\\
			[Y/\Gamma] \ar[r]^{\sigma_{[Y/\Gamma]}} & [Y/\Gamma]
		}
	\end{align*}
	Thus, following the description of direct images in terms of atlases as in 
	\S\,\ref{sec:real-str-on-the-tautological-line-bundle} 
	the diagram \eqref{real holomorphic connection for stack diagram} in this case amounts to the following diagram on the \'etale atlas $q:Y\rightarrow [Y/\Gamma]$:
	\begin{align*}
		\xymatrix@=2cm{
			E \ar[d]_{\widetilde{\sigma}_Y^{E}} \ar[r]^{\nabla} & E \otimes_{\mc{O}_{Y}} \Omega^1_{Y} \ar[d]^{\widetilde{\sigma}_Y^{E}\otimes\textnormal{Id}}\\
			(\sigma_{Y})_*E \ar[r]^(0.4){(\sigma_{Y})_*\nabla} & (\sigma_Y)_*E\otimes_{\mc{O}_Y}\Omega^1_{Y}
		}
	\end{align*}
	with the notations as in the diagram \eqref{real connection diagram 2}.
	Thus $\nabla$ respects the real structure $\sigma_Y$, which enables us to conclude that $\nabla$ is a real $\Gamma$-holomorphic connection on $(Y,\sigma_Y)$. 
\end{proof}

\subsection{Finite Galois coverings}\label{the case of curves subsection}\hfill\\
Let $X$ be an irreducible smooth complex projective variety together with a real structure $\sigma$. Fix a sncd $D$ on $X$ satisfying $\sigma^*D = D$. Let $(D_i)_{1\leq i\leq s}$ be the irreducible components of $D$.
Fix an integer $r>1$. By a covering lemma of Kawamata \cite[Lemma 5]{Kaw82}, there is a 
finite Galois covering $p : Y \to X$ tamely ramified along $D$ with ramification 
multi-index ${\bf r} = (r_1, \ldots, r_s)$, where each $r_i$ is a multiple of $r$. Such a covering $p : Y \to X$ is 
constructed as a sequence of finite surjective morphisms 
\begin{equation}
	Y := Y_N \stackrel{p_N}{\longrightarrow} Y_{N-1} \stackrel{p_{N-1}}{\longrightarrow} \cdots \stackrel{p_2}{\longrightarrow} Y_1 \stackrel{p_1}{\longrightarrow} Y_0 := X, 
\end{equation}
where $p_{i+1} : Y_{i+1} \to Y_i$ is a normalization of $Y_i$ in some field 
extension $L_i$ of the function field $K(Y_i)$, for all $i \geq 0$ 
(see proof of \cite[Theorem 17]{Kaw81}). 
Then applying universal property of normalization \cite[Proposition 12.44]{GorWed20} to the real structure $\sigma_{Y_i}$ for $Y_i$, we get a real structure $\sigma_{Y_{i+1}}$ on $Y_{i+1}$ compatible with the real structure on $Y_i$ along the normalization map $p_{i+1}$, for $i = 0, 1, \ldots, N-1$. Thus we get a real structure $\sigma_Y$ on $Y$ compatible with the real structure on $X$. 
If $\Gamma$ denotes the Galois 
group of the cover $p$, then $\Gamma$ satisfies 
$$\sigma_Y(gy) = g^{-1}\sigma_Y(y)\,\,\,\forall\,g\in\Gamma,y\in Y\,.$$


\begin{proposition}\label{root stack isomorphic to quotient stack proposition}
	Suppose that the ramified Galois cover $p:Y\rightarrow X$ constructed above has the following additional properties:
	\begin{enumerate}[$\bullet$]
		\item $p$ is ramified precisely along $D$, and 
		\item for each point of the ramification locus $\widetilde{D} := (p^*D)_{\text{red}}$, the isotropy group $\Gamma_y$ is cyclic of order $r$.
	\end{enumerate}
	Let $\mf{X} =\mf{X}_{(\mc{O}_X(D),\, s_D,\, r)}$. There is an isomorphism of stacks $F: \mf{X} \xrightarrow{\simeq} [Y/\Gamma]$. 
\end{proposition}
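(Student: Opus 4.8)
The plan is to realize $Y$ directly as a $\Gamma$-torsor over $\mf X$ and then invoke the fact that a Deligne--Mumford stack admitting a $\Gamma$-torsor $Y \to \mf X$ is canonically equivalent to the quotient $[Y/\Gamma]$; the desired isomorphism $F$ is then the inverse of this canonical identification. The starting point is the numerical input supplied by the two bulleted hypotheses: since $p$ is ramified precisely along $D$ and every isotropy group along $\widetilde D$ is cyclic of order exactly $r$, the ramification multi-index is $(r, \dots, r)$, so that $p^*D = r\,\widetilde D$ as divisors on $Y$. Consequently $p^*\mc O_X(D) \cong \mc O_Y(\widetilde D)^{\otimes r}$ canonically, and under this identification the pulled-back section $p^*s_D$ corresponds to $s_{\widetilde D}^{\otimes r}$, where $s_{\widetilde D}$ is the tautological section of $\mc O_Y(\widetilde D)$.

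First I would use this to build a morphism $q : Y \to \mf X$ lying over $p$. By the modular description of $\mf X = \mf X_{(\mc O_X(D),\,s_D,\,r)}$ in \S\ref{subsection:root stack}, the quadruple $(p : Y \to X,\ \mc O_Y(\widetilde D),\ \phi,\ s_{\widetilde D})$ --- with $\phi : \mc O_Y(\widetilde D)^{\otimes r} \xrightarrow{\simeq} p^*\mc O_X(D)$ the isomorphism above --- is an object of $\mf X(Y)$ precisely because $\phi(s_{\widetilde D}^{\otimes r}) = p^*s_D$. This object \emph{is} the morphism $q$. Next I would record that $q$ is $\Gamma$-invariant: each $g \in \Gamma$ preserves $\widetilde D$ and carries $s_{\widetilde D}$ to itself under the canonical isomorphism $g^*\mc O_Y(\widetilde D) \cong \mc O_Y(\widetilde D)$, which furnishes a $2$-isomorphism $q \circ g \cong q$. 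These $2$-isomorphisms satisfy the cocycle condition because the underlying $\mc O_Y$-module isomorphisms compose strictly, so by descent they assemble into a morphism $\bar q : [Y/\Gamma] \to \mf X$.

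The main step --- and the main obstacle --- is to prove that $q$ is a $\Gamma$-torsor, equivalently that $\bar q$ is an isomorphism; this is where the local geometry enters and the formal bookkeeping above does not suffice. Away from $D$ the claim is immediate: $Y \setminus \widetilde D \to X \setminus D$ is a free Galois cover with group $\Gamma$, while both $\mf X$ and $[Y/\Gamma]$ restrict over $X \setminus D$ to the scheme $X \setminus D$ itself, and $\bar q$ restricts to the identity. Along $\widetilde D$ I would argue \'etale-locally. By Cadman's local structure theorem \cite[Theorem 2.3.3]{Cad07}, in an \'etale chart transverse to a component $D_i$ the stack $\mf X$ is modelled on $[\Spec \mathbb C[z]/\mu_r]$ with $\mu_r$ acting by $z \mapsto \zeta z$ and coarse coordinate $w = z^r$ on $X$; simultaneously, the hypothesis that $\Gamma_y$ is cyclic of order $r$ means that in a compatible \'etale chart $p$ is the tame cyclic cover $z \mapsto z^r = w$, whose local ring is $\mathbb C[z]$, with local stabilizer exactly this $\mu_r$. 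In these coordinates $q$ is identified with the atlas projection $\Spec \mathbb C[z] \to [\Spec \mathbb C[z]/\mu_r]$, which is \'etale and tautologically a $\mu_r$-torsor; globalizing the $\Gamma$-action then identifies $Y \times_{\mf X} Y$ with $\Gamma \times Y$, exhibiting $q$ as a $\Gamma$-torsor.

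Finally I would conclude that, $q$ being an \'etale surjective $\Gamma$-torsor onto the Deligne--Mumford stack $\mf X$, the induced morphism $\bar q : [Y/\Gamma] \xrightarrow{\simeq} \mf X$ is an isomorphism, and we set $F := \bar q^{-1}$. I expect the only genuinely delicate point to be the transverse local comparison at multiple-component intersections of $\widetilde D$, where one must check that the several cyclic factors $\mu_r$ glue correctly with the ambient $\Gamma$-action and that Cadman's chart for the iterated root stack matches the normalization charts produced by Kawamata's construction; everything else reduces to the formal descent already assembled above.
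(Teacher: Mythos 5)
Your proposal is correct, and mathematically it rests on exactly the same pivot as the paper's proof: the natural morphism $q\colon Y \to \mf X$ determined by the quadruple $\left(p,\ \mc O_Y(\widetilde D),\ \phi,\ s_{\widetilde D}\right)$ (using $p^*D = r\widetilde D$, which the two bulleted hypotheses force), together with the fact that $q$ is a $\Gamma$-torsor. The difference is in what is proved versus cited, and in the packaging. The paper does not prove the torsor property at all: it invokes \cite[Lemma 2.3, Proposition 2.4]{BisMajWan12} and merely recalls the resulting functors on $T$-points --- an object $\varphi\colon T \to \mf X$ goes to the pulled-back torsor $T\times_{\mf X} Y \to T$ with its equivariant map to $Y$, and conversely a torsor $P \to T$ with equivariant $h\colon P\to Y$ goes to the descended morphism $f\colon T\to X$ together with the remaining entries of the quadruple. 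You instead build the inverse direction intrinsically: $\Gamma$-invariance of $q$, with strictly composing $2$-isomorphisms, descends $q$ to $\bar q\colon [Y/\Gamma]\to\mf X$ by the universal property of the quotient stack, and you prove the torsor property \'etale-locally via Cadman's structure theorem plus the local Kummer form of a tame cyclic cover, so that $\bar q$ is an isomorphism and $F := \bar q^{-1}$. What your route buys is a self-contained argument that makes visible exactly where the hypotheses enter (they force all ramification indices to equal $r$ and make the local charts match); what the paper's route buys is brevity, at the price of deferring essentially all content to \cite{BisMajWan12}.

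Two remarks on the caveat you flag at the end. First, the proposition concerns the single root stack $\mf X_{(\mc O_X(D),\,s_D,\,r)}$, not the iterated root stack of the components, so ``Cadman's chart for the iterated root stack'' is the wrong local model at a crossing point (there the single root stack is in fact singular). Second, the point is moot, because under the stated hypotheses the crossing case cannot occur: if $y$ lay over a crossing of $D$, the inertia subgroups of the two branches of $\widetilde D$ through $y$ would each have order $r$ inside $\Gamma_y \cong \bb Z/r\bb Z$, hence each would equal $\Gamma_y$; then $\Gamma_y$ would fix both branches pointwise, hence (the branches being transverse divisors) act trivially on $T_yY$, hence act trivially near $y$ --- a contradiction for $r>1$. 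So the hypotheses force $D$ to be smooth, your transverse one-branch analysis is the whole story, and in the curve setting this section is aimed at the issue never arises at all.
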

\begin{proof}
	We briefly recall the isomorphism $F$ from \cite[Proposition 2.4]{BisMajWan12}. The additional assumptions ensure that we are in the situation of \cite[\S 2.2]{BisMajWan12}, so that the Proposition of [\textit{loc. cit.}] can be applied.  
	Suppose we are given an object of $\mf{X}$ over a scheme $T$, which means a morphism 
	\begin{align}
		\varphi: T \rightarrow \mf{X}
	\end{align}
	It is shown in \S 2.2 of [\textit{loc. cit.}] that there is a natural morphism $u: Y\rightarrow \mf{X}$, and that the projection morphism 
	$$T\times_{\mf{X}} Y \rightarrow T$$
	is a $\Gamma$-torsor \cite[Lemma 2.3]{BisMajWan12}. Then the top and left arrows of the Cartesian diagram
	\begin{align}
		\xymatrix{
			T\times_{\mf{X}} Y \ar[r]^(0.6){a} \ar[d]_{b} & Y \ar[d]^{u}\\
			T \ar[r]^{\varphi} & \mf{X}
		}
	\end{align}
	is the candidate for the corresponding object in $[Y/\Gamma](T)$. To go the other way, recall that the objects of $\mf{X}$ are tuples as described in \S\ref{subsection:root stack}. Suppose an object of $[Y/\Gamma]$ over a scheme $T$ is given by the data
	\begin{align}
		\xymatrix{
			P \ar[r]^{h} \ar[d]_{q} & Y \\
			T &
		}
	\end{align}
	where $q$ is a $\Gamma$-torsor map, and $h$ is $\Gamma$-equivariant. The $\Gamma$-invariant map $p\circ h : P\rightarrow X$ then factors through the torsor map $q$, yielding a morphism $f: T\rightarrow X$. This $f$ will be our first entry of the intended tuple corresponding to an object in $\mf{X}(T)$. The other entries of the tuple are also constructed similarly (see \cite{BisMajWan12} for more details).
\end{proof}

The next proposition shows that the isomorphism of stacks 
$F : \mf{X} \stackrel{\simeq}{\longrightarrow} [Y/\Gamma]$ 
in Proposition \ref{root stack isomorphic to quotient stack proposition} 
is compatible with their respective real structures discussed in 
Proposition \ref{real structure on root stack proposition} and 
\ref{real structure on quotient stack proposition}. 

\begin{proposition}\label{compatibility of real structures proposition}
	The isomorphism $F:\mf{X}\xrightarrow{\simeq} [Y/\Gamma]$ of Proposition \ref{root stack isomorphic to quotient stack proposition} is compatible with the real structures $\sigma_{\mf{X}}$ and $\sigma_{[Y/\Gamma]}$, i.e. the following diagram is $2$-commutative:
	\begin{align}
		\xymatrix@=1.3cm{
			\mf{X} \ar[r]^{\sigma_{\mf{X}}} \ar[d]_{F} & \mf{X} \ar[d]^{F} \\
			[Y/\Gamma] \ar[r]^{\sigma_{[Y/\Gamma]}} & [Y/\Gamma]
		}
	\end{align}
\end{proposition}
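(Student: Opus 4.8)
The plan is to verify the asserted $2$-commutativity by comparing the two composite morphisms of stacks on $T$-valued points, functorially in the test scheme $T$. Since $F$ is an isomorphism of Deligne--Mumford stacks and all four morphisms in the square lie over $\Spec\bb C$ in the appropriate sense, it suffices to produce, for every scheme $T$, a natural isomorphism between the functors $F(T)\circ\sigma_{\mf X}(T)$ and $\sigma_{[Y/\Gamma]}(T)\circ F(T)$ from $\mf X(T)$ to $[Y/\Gamma](T)$, compatibly with restriction along morphisms $T'\to T$. Recall from Proposition \ref{root stack isomorphic to quotient stack proposition} that $F$ sends an object $\tau\in\mf X(T)$, viewed as a morphism $\tau:T\to\mf X$, to the $\Gamma$-torsor $P:=T\times_{\tau,\,\mf X,\,u}Y\to T$ equipped with the $\Gamma$-equivariant projection $h:=\mathrm{pr}_Y:P\to Y$, where $u:Y\to\mf X$ is the tautological atlas.

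First I would unwind the two routes explicitly. Going down-then-right, Proposition \ref{real structure on quotient stack proposition} sends $F(\tau)=(P\xrightarrow{h}Y,\,P\to T)$ to the object with underlying torsor $\sigma_Y^*P:=P\times_{h,\,Y,\,\sigma_Y}Y$, equivariant map $\mathrm{pr}_Y$, and twisted action $g\cdot(v,y)=(g^{-1}v,\,gy)$. Going right-then-down, one obtains $F(\sigma_{\mf X}(\tau))=(P'\to T,\,\mathrm{pr}_Y)$ with $P':=T\times_{\sigma_{\mf X}\circ\tau,\,\mf X,\,u}Y$ and the torsor $\Gamma$-action inherited from $u$. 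The heart of the argument is to identify $P'$ with $\sigma_Y^*P$. Applying Lemma \ref{lem:pullback-of-atlas-under-real-str-of-root-stack} to the atlas $u:Y\to\mf X$ gives a cartesian square identifying $\mf X\times_{\sigma_{\mf X},\,\mf X,\,u}Y$ with $\sigma^{-1}Y=X\times_{\sigma,\,X,\,p}Y$, together with a morphism $u_{\sigma_{\mf X}}:\sigma^{-1}Y\to\mf X$ lying over $f_\sigma$. Pasting this with the cartesian square defining $P$ and using associativity of fibre products yields $P'\cong T\times_{\tau,\,\mf X}\bigl(\mf X\times_{\sigma_{\mf X},\,\mf X,\,u}Y\bigr)\cong T\times_{\tau,\,\mf X,\,u_{\sigma_{\mf X}}}\sigma^{-1}Y$.

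The key step I would isolate is the compatibility of the atlas $u$ with the two real structures: under the canonical isomorphism $\sigma^{-1}Y\cong Y$ supplied by $\sigma_U$ in Lemma \ref{lem:pullback-of-atlas-under-real-str-of-root-stack}, the morphism $u_{\sigma_{\mf X}}$ agrees with $u\circ\sigma_Y$ (both lie over $\sigma\circ p$, and one matches the full root-stack datum). Granting this, the previous identification becomes $P'\cong T\times_{\tau,\,\mf X,\,u\circ\sigma_Y}Y\cong P\times_{h,\,Y,\,\sigma_Y}Y=\sigma_Y^*P$, compatibly with the equivariant maps $\mathrm{pr}_Y$. Tracking the $\Gamma$-action through this chain, the original torsor action $g\cdot(s,y)=(s,gy)$ on $P'$ goes over to $((s,\sigma_Y(gy)),gy)=((s,g^{-1}\sigma_Y(y)),gy)$, which is precisely the twisted action $g\cdot(v,y)=(g^{-1}v,gy)$ on $\sigma_Y^*P$; the inversion appears exactly because $\sigma_Y$ is anti-equivariant, i.e.\ $\sigma_Y(gy)=g^{-1}\sigma_Y(y)$ as in \eqref{group action}. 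Finally I would confirm naturality in $T$ and compatibility with arrows of $\mf X(T)$, so that these identifications assemble into a natural isomorphism of functors, giving the desired $2$-commutativity.

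I expect the main obstacle to be the atlas-compatibility $u_{\sigma_{\mf X}}\cong u\circ\sigma_Y$: this requires matching the root-stack datum defining $u_{\sigma_{\mf X}}$, built from the isomorphism $\widehat{\sigma_L}$ on $L=\mc O_X(D)$ in Lemma \ref{lem:pullback-of-atlas-under-real-str-of-root-stack}, with the pullback along $\sigma_Y$ of the datum defining $u$, and it ultimately rests on the facts that $\sigma_L$ fixes the canonical section $s_D$ and that $\sigma_Y$ was constructed compatibly with $\sigma$ through the universal property of normalization. A secondary point deserving care is the bookkeeping of the $\Gamma$-action across the iterated fibre products, where the substitution $g\mapsto g^{-1}$ must be propagated consistently so as to land on the twisted action prescribed by Proposition \ref{real structure on quotient stack proposition}.
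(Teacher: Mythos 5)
Your proposal is correct and follows essentially the same route as the paper: both arguments unwind the two composites on $T$-points as fibre products over $\mf X$, invoke Lemma \ref{lem:pullback-of-atlas-under-real-str-of-root-stack} applied to the atlas $u : Y \to \mf X$ (together with the cartesian square of $\sigma_Y$ over $\sigma$) to identify $T\times_{\sigma_{\mf X}\circ\tau,\,\mf X,\,u}Y$ with $\sigma_Y^*\bigl(T\times_{\tau,\,\mf X,\,u}Y\bigr)$, and then check naturality in $T$. If anything, you are more explicit than the paper on the two points you flag, namely matching the root-stack datum of $u_{\sigma_{\mf X}}$ with that of $u\circ\sigma_Y$ (which rests on $\sigma_Y^*$ preserving the reduced ramification divisor and its canonical section, a step the paper absorbs into its citation of the lemma) and tracking how the anti-equivariance $\sigma_Y(gy)=g^{-1}\sigma_Y(y)$ turns the plain torsor action into the twisted action of Proposition \ref{real structure on quotient stack proposition}.
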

\begin{proof}
	
	Consider an object $\alpha$ of $\mf{X}(T)$ given by a morphism $\varphi: T\rightarrow \mf{X}$. Let us understand its image in $[Y/\Gamma](T)$ under $\sigma_{\mf{X}}(T)\circ F(T)$. As mentioned above, $\sigma_{[Y/\Gamma]}(T)(\alpha)$ consists of the top and the left arrows of 
	the Cartesian diagram
	\begin{align}
		\xymatrix@=1.5cm{
			T\underset{\varphi,\mf{X},u}{\times} Y \ar[r]^(0.6){h} \ar[d] & Y \ar[d]^{u} \\
			T \ar[r]^{\varphi} & \mf{X}
		}
	\end{align}
	where $h$ is the pull-back of $\varphi$ along $u$. Hence by Proposition \ref{real structure on quotient stack proposition}, $\sigma_{[Y/\Gamma]}(T)(F(T)(\alpha))$ is given by the data
	\begin{align}\label{data 1}
		\xymatrix@=1.5cm{
			\sigma_Y^*\left(T\underset{\varphi,\mf{X},u}{\times} Y\right) \ar[r]^(0.6){h'} \ar[d] & Y \\
			T & 
		}
	\end{align}
	where $h'$ is the pull-back of $h$ along $\sigma_Y$. But note that $h'$ is the pull-back of $\varphi$ along $u\circ\sigma_Y$, and since pull-back of $\varphi$ along the composition $u\circ\sigma_Y$ is isomorphic to taking two consecutive pull-backs, first taking pull-back of $\varphi$ along $u$ (namely $h$) followed by the pull-back of $h$ again by $\sigma_Y$, we get
	\begin{align}
		\sigma_Y^*\left(T\underset{\varphi,\mf{X},u}{\times} Y\right) & = \left(T\underset{\varphi,\mf{X},u}{\times} Y\right) \underset{h,Y,\sigma_{Y}}{\times} Y \nonumber \\ 
		\nonumber \hfill\\
		&\simeq T\underset{\,\,\,\varphi,\mf{X},u\circ \sigma_Y}{\times} Y \label{pullback iso1}
	\end{align}
	and hence $\sigma_{[Y/\Gamma]}(T)(F(T)(\alpha))$ can also be expressed by the data
	\begin{align}\label{data 2}
		\xymatrix@=1.5cm{
			T\underset{\,\,\,\varphi,\mf{X},u\circ \sigma_Y}{\times} Y \ar[r]^(0.6){h'} \ar[d] & Y \\
			T & 
		}
	\end{align}
	where $h'$ is the pull-back of $\varphi$ along $u\circ\sigma_Y$.\\
	On the other hand, $\sigma_{\mf{X}}(T)(\alpha)$ corresponds to the morphism
	$$\sigma_{\mf{X}}\circ \varphi : T\rightarrow \mf{X}\,,$$
	and thus $F(T)\left(\sigma_{\mf{X}}(T)(\alpha)\right)$ is given by the top and left arrows of the diagram
	\begin{align}\label{data 3}
		\xymatrix{
			T \underset{\sigma_{\mf{X}}\circ \varphi,\mf{X},u}{\times} Y \ar[rr] \ar[d] && Y \ar[d]^{u}\\
			T \ar[r]^{\varphi} & \mf{X} \ar[r]^{\sigma_{\mf{X}}} & \mf{X} 
		}
	\end{align}
	But again, since the pull-back of $u$ along $\sigma_{\mf{X}}\circ\varphi$ is isomorphic to taking two consecutive pullbacks, first taking the pull-back of $u$ along $\sigma_{\mf{X}}$ followed by taking the pull-back of the resulting fiber product (say $u':\mf{X}\underset{\sigma_{\mf{X}},Y,\sigma_Y}{\times}Y\rightarrow \mf{X}$) along $\varphi$, we can write 
	\begin{align*}
		T \underset{\sigma_{\mf{X}}\circ \varphi,\mf{X},u}{\times} Y  & \simeq T \underset{\varphi,\mf{X},u'}{\times}\left(\mf{X}\underset{\sigma_{\mf{X}},Y,\sigma_Y}{\times}Y\right)\,.
	\end{align*}
	But note that $$\mf{X}\underset{\sigma_{\mf{X}},Y,\sigma_Y}{\times}Y \simeq Y\,,$$
	since it is easy to see that the diagram
	\begin{align*}
		\xymatrix{
			Y \ar[r]^{\sigma_Y} \ar[d]_{p} & Y \ar[d]^{p} \\
			X \ar[r]^{\sigma} & X
		}
	\end{align*}
	is Cartesian, and consequently, by Lemma \ref{pullback of atlas}, the following diagram is Cartesian as well:
	\begin{align*}
		\xymatrix{
			Y \ar[r]^{\sigma_Y} \ar[d]_{u} & Y \ar[d]^{u} \\
			\mf{X} \ar[r]^{\sigma_{\mf{X}}} & \mf{X}
		}
	\end{align*}	
	It then follows that 
	\begin{align*}
		T \underset{\sigma_{\mf{X}}\circ \varphi,\mf{X},u}{\times} Y  & \simeq T \underset{\varphi,\mf{X},u'}{\times}\left(\mf{X}\underset{\sigma_{\mf{X}},Y,\sigma_Y}{\times}Y\right)\,\,\,\,\left[\textnormal{Here}\,u'\,\,\textnormal{is the projection}\,\,\mf{X}\underset{\sigma_{\mf{X}},Y,\sigma_Y}{\times}Y\rightarrow \mf{X}\right]\\
		& \simeq T\underset{\varphi,\mf{X},u}{\times} Y
	\end{align*}
	
	Thus $F(T)\left(\sigma_{\mf{X}}(T)(\alpha)\right)$ can be re-written as
	\begin{align}\label{data 4}
		\xymatrix@=1.5cm{
			T\underset{\varphi,\mf{X},u}{\times} Y \ar[r]^(0.6){\sigma_Y\circ \varphi'} \ar[d] & Y\\
			T &
		}
	\end{align}
	where $\varphi'$ is the pull-back of $\varphi$ along $u:Y\rightarrow \mf{X}$.	
	Finally, comparing \eqref{data 2} with \eqref{data 4} and using the Cartesian diagram for \eqref{pullback iso1}, namely
	\begin{align*}
		\xymatrix@=1.5cm{
			T\underset{\,\,\,\varphi,\mf{X},u\circ \sigma_Y}{\times} Y \ar[r]^(0.6){h'} \ar[d]_{l} & Y \ar[d]^{\sigma_Y} \\
			T\underset{\varphi,\mf{X},u}{\times} Y \ar[r]^{h} & Y
		}
	\end{align*}
	we see that the triangles in the diagram below commute:
	
	\begin{align*}
		\xymatrix@=2.5cm{
			T\underset{\,\,\,\varphi,\mf{X},u\circ \sigma_Y}{\times} Y \ar[rrd]^{h'} \ar[rd]_{l} \ar[dd] & & \\
			& T\underset{\varphi,\mf{X},u}{\times} Y \ar[ld] \ar[r]^{\sigma_Y\circ \varphi'} & Y \\
			T & &
		}
	\end{align*}
	Thus $l$ maps $\sigma_{[Y/\Gamma]}(T)(F(T)(\alpha))$ to $F(T)\left(\sigma_{\mf{X}}(T)(\alpha)\right)$. It is clear that $l$ is an isomorphism from the fact that it is the pull-back of the isomorphism $\sigma_Y$.
	This yields a natural isomorphism of functors between $F\circ \sigma_{\mf{X}}$ and $\sigma_{[Y/\Gamma]}\circ F$, which proves our claim.
\end{proof}

\section*{Acknowledgment}
This work was initiated during the first named author's visit in the 
Department of Mathematics and Statistics of the Indian Institute of Science Education 
and Research Kolkata in December 2022. 
The second named author is partially supported by the DST INSPIRE Faculty Fellowship 
(Research Grant No.: DST/INSPIRE/04/2020/000649), Ministry of Science \& Technology, 
Government of India.


\begin{thebibliography}{AAAAA} 
	\bibitem[Amr14a]{Amrutiya14a}  S. Amrutiya, Real parabolic bundles over a real curve, 
	\textit{Proc. Indian Acad. Sci. (Math. Sci.)}. \textbf{124} (2014), 17--30.
	
	\bibitem[Amr14b]{Amrutiya14b} S. Amrutiya, Connections on real parabolic bundles over
	a real curve, \textit{Bull. Korean Math. Soc.} \textbf{51} (2014), No. 4, 1101--1113.
	
	\bibitem[Bis97]{Biswas97} I. Biswas, Parabolic bundles as orbifolds bundles, 
	\textit{Duke Math. J.} \textbf{88} (1997), 305--325.
	
	\bibitem[BMW12]{BisMajWan12} I. Biswas, S. Majumder and M. L. Wong, Root stacks, 
	principal bundles and connections, \textit{Bull. Sci. math.} \textbf{136} (2012), 369--398. 
	
	\bibitem[BY96]{BodYok96} H. Boden and K. Yokogawa, Moduli spaces of parabolic Higgs bundles 
	and parabolic $K(D)$ pairs over smooth curves I, \textit{Internat. J. Math.} {\bf 7} (1996), 
	no. 3, 573--598.
	
	\bibitem[Bor07]{Born07} N. Borne, Fibr\'es paraboliques et champ des racines, \textit{ Int.
		Math. Res. Not.} \text{Art. ID} rnm049, \textbf{38} (2007).
	
	\bibitem[BL23]{BornLaar23} N. Borne, A. Laaroussi, Parabolic connections and stack of roots, 
	{\it Bull. Sci. Math.} (2023), 103294, doi: 
	\href{https://doi.org/10.1016/j.bulsci.2023.103294}{10.1016/j.bulsci.2023.103294}; 
	\href{https://doi.org/10.48550/arXiv.2201.00064}{arXiv:2201.00064}. 
	
	\bibitem[Cad07]{Cad07} C. Cadman, Using stacks to impose tangency conditions on curves, 
	\textit{Amer. J. Math.} {\bf 129 (2)} (2007), 405--427.
	
	\bibitem[DP19]{DonPan19} R. Donagi and T. Pantev, Parabolic Hecke eigensheaves, 
	{\it Astérisque}, No. {\bf 435} (2022), viii+192 pp. 
	\href{https://arxiv.org/abs/1910.02357}{arXiv:1910.02357}. 
	
	\bibitem[EV92]{EsnVieh92} H. Esnault, E. Viehweg, \textit{Lectures on vanishing theorems}, 
	Vol. \textbf{20}. DMV Seminar. Birkh\"auser Verlag, Basel, 1992, pp. vi+164.
	
	\bibitem[GW20]{GorWed20} U. Görtz, T. Wedhorn, {\it Algebraic geometry I}, 
	Second edition. Springer Studium Mathematik--Master. 
	Springer Spektrum, Wiesbaden, (2020). vii+625 pp. 
	
	\bibitem[Kaw81]{Kaw81} Y. Kawamata, Characterization of abelian varieties, 
	{\it Compositio Math.} {\bf 43} (1981), no. 2, 253--276.
	
	\bibitem[Kaw82]{Kaw82}  Y. Kawamata, A generalization of Kodaira-Ramanujam's vanishing theorem \textit{Math. Ann.} {\bf 261} (1982), 43 -- 46.
	
	\bibitem[LSS13]{LSS13} F. Loray, M. H. Saito and C. T. Simpson, Foliations on 
	the moduli space of rank two connections on the projective line minus four points, 
	In: \textit{Geometric and differential Galois theories}, 
	\textbf{27} (2013), S\'emin. Congr. Soc. Math. France, Paris, 117--170. 
	
	\bibitem[Mac71]{MacL71} S. MacLane, \textit{Categories for the working mathematician}, 
	Graduate Texts in Mathematics, Vol. 5. Springer-Verlag, New York-Berlin, 
	1971, pp. ix+262. 
	
	\bibitem[MY92]{MY92} M. Maruyama and K. Yokogawa, Moduli of parabolic stable sheaves. 
	{\it Math. Ann.} {\bf 293} (1992), no. 1, 77--99.  
	
	\bibitem[MS80]{MS80} V. B. Mehta and C. S. Seshadri, Moduli of vector bundles 
	on curves with parabolic structure. \textit{Math. Ann.} {\bf 248} (1980), 205--239. 
	
	\bibitem[Ols16]{Ols16} M. Olsson, \textit{Algebraic spaces and stacks}, Vol. \textbf{62}. 
	American Mathematical Society Colloquium Publications, American Mathematical
	Society, Providence, RI, (2016), pp. xi+298.
	
	\bibitem[Sim90]{S90} C. T. Simpson, Harmonic bundles on noncompact curves, 
	\textit{J. Amer. Math. Soc.} \textbf{3} (1990), 713--770. 
	
	\bibitem[Yok95]{Yok95} K. Yokogawa, Infinitesimal deformation of parabolic Higgs sheaves, 
	\textit{Internat. J. Math.} \textbf{6} (1995), 125--148. 
\end{thebibliography}
\end{document}